\documentclass[11pt]{amsart}
\usepackage{amsmath,amssymb,amsthm,graphicx,verbatim}
\usepackage{tikz}
\usepackage[margin=1in]{geometry}

\newtheorem{lemma}{Lemma}[section]
\newtheorem{corollary}[lemma]{Corollary}
\newtheorem{proposition}[lemma]{Proposition}
\newtheorem{theorem}[lemma]{Theorem}

\theoremstyle{definition}
\newtheorem{definition}[lemma]{Definition}
\theoremstyle{remark}
\newtheorem{remark}[lemma]{Remark}
\newtheorem{example}[lemma]{Example}

\title{Generic spherical unitary dual for Chevalley groups over local fields}
\author{Dan Ciubotaru}
 \address{Mathematical Institute, University of Oxford, Oxford OX2 6GG, UK}
        \email{dan.ciubotaru@maths.ox.ac.uk}

\date{}

\begin{document}

\begin{abstract}
We prove that a generic spherical parameter for the graded affine Hecke algebra with equal parameters is unitary if and only if its normalized intertwining
forms are positive on the reflection representation and on the
irreducible constituents of its second symmetric power.  The proof
combines a signature formula for the reflection representation, Yu's
uniform two-wall operation, a simply-laced Jantzen recursion, and a
root-poset analysis.  In types
\(B_n\) and \(C_n\), we prove more sharply that the reflection
representation and the traceless-diagonal constituent of
\(\operatorname{Sym}^2V\) suffice. We also give a shorter 
second proof when arbitrary Weyl-group types are allowed.   The consequence of the first proof
(via Barbasch--Vogan petite \(K\)-types) is that the generic spherical
unitary dual of a Chevalley group over the real numbers or a
nonarchimedean local field, and for simply-laced groups and complex
symplectic groups also over the complex numbers, is independent of the
field.  Moreover, the answer
has a simple uniform description as the Weyl conjugates of a disjoint
union of \(2^{\nu(R^\vee_{\mathrm{sh}})}\) alcoves in the affine Weyl
group arrangement in the fundamental Weyl chamber, where
\(\nu(R^\vee_{\mathrm{sh}})\) is the matching number of the Dynkin
diagram of the subsystem of short coroots.
\end{abstract}

\maketitle

\section*{Introduction}
\addcontentsline{toc}{section}{Introduction}

Let \(\mathbb H(R)\) be the graded affine Hecke algebra attached to a
reduced crystallographic root system \(R\), with all parameters equal to
one.  A real unramified parameter \(\nu\) determines a spherical standard
module and normalized Hermitian intertwining forms on the multiplicity
spaces of the finite Weyl-group types.  The module is unitary precisely
when all these forms are positive.  Although this criterion is finite, it
appears at first to require the complete character theory of the Weyl
group.  The purpose of this paper is to show that, for generic spherical
parameters, only the first two symmetric degrees of the reflection
representation are needed.

Here are the principal results in a form that also records the resulting
classification.

\medskip

\noindent\textbf{Theorem A (uniform detection and classification).}
\emph{Let \(R\) be a reduced crystallographic root system and let
\(\nu\) be a generic Hermitian spherical parameter for the
equal-parameter graded affine Hecke algebra \(\mathbb H(R)\).  The
spherical module with parameter \(\nu\) is unitary if and only if its
normalized forms are positive definite on the reflection representation
\(V\) and on every irreducible constituent of
\(\operatorname{Sym}^2V\).  Equivalently, the root ideal of its
dominant representative is a Yu ideal.  In a fundamental chamber the
unitary set is a disjoint union of
\[
                         2^{k(R)}
\]
affine alcoves (relative to the Hermitian locus when
\(w_0\ne-1\)), where \(k(R)\) is the matching number of the Dynkin
graph of short coroots.}

\medskip

\noindent\textbf{Theorem B (sharper \(B/C\) criterion).}
\emph{For \(R=B_n\) or \(C_n\), it is enough to test \(V\) and the single
irreducible traceless-diagonal constituent
\[
 U_n=\left\{\sum_{i=1}^n c_i e_i^2:\sum_{i=1}^n c_i=0\right\}
       \subset\operatorname{Sym}^2V.
\]
Thus positivity on \(V\) and \(U_n\) already characterizes the Yu
cells and spherical unitarity.}

\medskip

\noindent\textbf{Corollary C (field independence).}
\emph{After the standard normalization of real unramified parameters,
the generic spherical unitary dual of a Chevalley group has the same
description over \(\mathbb R\) and over every nonarchimedean local
field.  The same description holds over \(\mathbb C\) for simply-laced
groups and for \(\operatorname{Sp}(2n,\mathbb C)\).}

\medskip

The restriction to the Weyl types in Theorem A is essential for the
application.  Under the Barbasch--Vogan philosophy of petite
\(K\)-types \cite{BarbaschPetite}, their normalized operators can be
transported uniformly to spherical principal series of split real
groups; for complex groups the analogous role is played by zero-weight
spaces of small representations.  The nonarchimedean comparison comes
from the Iwahori and graded-Hecke correspondences
\cite{BM,Lusztig}.  In type \(B/C\), Theorem B is stronger than
Theorem A and is precisely what adds the complex symplectic groups.

The set of unitary cells itself was known from the case-by-case
classifications \cite{BM,BarbaschPetite,BaClassical,BaCi,CiF4}.
What is new here is not another enumeration of those cells.  The paper
proves uniformly that Yu's cells exhaust the generic spherical unitary
dual; it gives the uniform detection theorem using only \(V\) and
\(\operatorname{Sym}^2V\); it identifies Yu's exponent \(\ell(R)\) with
a matching number; and in types \(B_n,C_n\) it replaces the full
symmetric-square test by the single constituent \(U_n\).  A second,
shorter proof using boundary-induced Weyl types is also uniform and does
not invoke the case-by-case lists of relevant types.

For $\mathbb H(R)$, the reducibility hyperplanes are
\[
                       \beta(\nu)=1,\qquad \beta\in R^+.
\]
They divide the dominant chamber into cells, encoded by the upper root
ideals
\[
                 I(\nu)=\{\beta\in R^+:\beta(\nu)>1\}.
\]
Yu discovered a uniform operation crossing two consecutive walls in an
\(A_2\)-configuration \cite{Yu}.  Positivity on every Weyl-group type is equivalent
at the two outer cells of such a move.  Starting at the fundamental cell
therefore produces a distinguished collection of unitary cells, which we
call the {\it Yu cells}.  Yu computed their number and observed, using the
classifications available at the time, that they exhaust the generic spherical
unitary dual.  The missing point was a uniform proof that every other
cell is nonunitary.

The first proof supplied here retains the restriction to symmetric degree
two.  We first establish an alternating-height formula for the signature
on \(V\).  Reflection positivity becomes the elementary condition
\[
             \sum_{\beta\in I}(-1)^{\operatorname{ht}(\beta)+1}=0.
\]
We then repeatedly remove Yu pairs until the ideal is {\it locked}, see Definition \ref{d:locked}.  In
simply-laced type, a one-wall Jantzen calculation identifies the first
form on the radical of the symmetric-square intertwiner with a reflection
form for an orthogonal root subsystem.  This gives a recursive signature
formula, Corollary \ref{cor:sym2-signature}.  The remaining task is combinatorial: classify the nonempty
balanced locked ideals and prove that the recursive signature is nonzero
on each of them.  Even type \(D\) is the uniform classical model; via a standard 
Levi subgroup argument the conclusion also follows for types \(A\) and odd \(D\); the
exceptional simply-laced cases come down to a finite verification in
\(E_8\).  In types \(B\) and \(C\), a cofactor and an exact
maximal-parabolic matrix coefficient show more sharply that the reflection
representation together with the traceless-diagonal constituent \(U_n\)
already detects unitarity.  Exact finite checks handle \(F_4\) and \(G_2\).

\medskip

We also give a shorter second proof when arbitrary Weyl-group types are
allowed.  Parabolic induction of Hermitian forms transports a
nonpositivity certificate from a Levi subsystem to the ambient root
system.  This reduces the analytic calculation to the explicit basic
cases \(B_2\), \(D_4\), and \(G_2\); the rest is a root-poset extraction of one
of these basic cases from every nonempty locked ideal.  This argument is more
economical, but the first proof is retained because its symmetric-degree-two restriction is what is needed for the petite-\(K\)-type application.

\medskip

There is also a simple interpretation of Yu's power of two.  Let
\(R^\vee_{\rm sh}\) be the short-coroot subsystem and let
\(\Gamma(R^\vee_{\rm sh})\) be its Dynkin graph.  If \(\ell\) denotes
the matching number of this tree (or forest), then the number of Yu cells is
\(2^\ell\).  Equivalently, \(\ell\) is the largest number of mutually
disjoint standard-parabolic \(A_2\)-factors in the short-coroot subsystem.
Using the adjacency spectrum of a simply-laced Dynkin diagram, it is also
\[
 \ell=\sum_{\Phi\subset R^\vee_{\rm sh}}
          \#\{e\in\operatorname{Exp}(\Phi):e<h_\Phi/2\},
\]
where $h_\Phi$ is the Coxeter number.
Thus both the geometry of the unitary set and the exponent in its
cardinality are determined directly by the root system.

The paper begins by recalling the normalized intertwiners and Yu's
two-wall theorem, followed by the interpretation of \(\ell\).  We then
prove the reflection signature formula and the uniform reduction to
locked ideals.  The simply-laced and non-simply-laced symmetric-square
arguments occupy the middle sections.  The boundary-induced proof is
given separately near the end, followed by the graded-Hecke theorem and
its local-field consequence.

\section{Spherical intertwiners, cells, and Yu's theorem}
\label{sec:yu-summary}

Let \(R\subset V\) be a reduced crystallographic root system ($V$ a finite-dimensional real vector space), viewed as
a coroot system in the group-theoretic applications, let \(W\) be its
Weyl group, and fix simple roots \(\Delta\) and the corresponding
dominant chamber \(C\subset V\).  We use the equal-parameter graded affine Hecke
algebra normalization in which the reducibility hyperplanes
in \(C\) are
\[
                         H_\beta=\{\nu\in V:\beta(\nu)=1\},
                         \qquad \beta\in R^+ .               \tag{1.1}
\]
This $\nu$ is the usual real form of the unramified parameter under the Satake
isomorphism.  Throughout the paper \emph{generic} means regular and off
all the hyperplanes in (1.1).  In the closed dominant chamber this is
equivalent to lying in its interior and satisfying
\(\beta(\nu)\ne1\) for every positive root.
When \(w_0\ne-1\), Hermitian parameters are treated on the Hermitian Levi
flats of Lemma~\ref{lem:boundary-hermitian-levi}; cells and alcoves in
that case are understood relative to those flats.

Suppose first that \(w_0=-1\), and choose a reduced expression
\(w_0=s_N\cdots s_1\).  If \(s_i=s_{\alpha_i}\), set
\[
 \beta_i=s_1\cdots s_{i-1}(\alpha_i).
\]
The \(\beta_i\)'s are the positive roots, each occurring once.  For a
unitary \(W\)-module \((\sigma,E)\), the normalized long-intertwining form
is represented, up to a positive scalar, by
\[
 A_E(\nu)=
 \frac{1+\beta_N(\nu)\sigma(s_N)}{1+\beta_N(\nu)}\cdots
 \frac{1+\beta_1(\nu)\sigma(s_1)}{1+\beta_1(\nu)}.         \tag{1.2}
\]
The product is independent of the reduced expression after the standard
identifications of source and target.  Since \(w_0\nu=-\nu\), it is
Hermitian for real \(\nu\).  Its restriction to each \(W\)-isotypic
multiplicity space is the invariant Hermitian form on the corresponding
type in the spherical module.

The Barbasch--Moy unitarity criterion~\cite{BM} says that a generic spherical
module is unitary exactly when these forms are positive definite for all
irreducible \(W\)-types.  Our theorem shows that the reflection type and
the constituents of its symmetric square already suffice.

The hyperplanes (1.1) cut the interior of \(C\) into cells.  The cell of
\(\nu\) is encoded by
\[
 I(\nu)=\{\beta\in R^+:\beta(\nu)>1\}.                      \tag{1.3}
\]
This is an upper ideal in the root poset; its minimal elements form an
antichain, and every realizable antichain gives a cell.  The fundamental
cell \(U_0\), containing the origin in its closure, corresponds to the
empty ideal.

Color the positive roots by
\[
        \epsilon(\beta)=(-1)^{\operatorname{ht}(\beta)+1}.
\]
An upper ideal \(I\subset R^+\) is called \emph{balanced} if
\[
                    \sum_{\beta\in I}\epsilon(\beta)=0.
\]

Let \(I=I(U)\) be the root ideal of a cell \(U\).  A root
\(\beta\in I\) is called \emph{exposed} if \(H_\beta\) is a wall of
\(U\) and crossing this wall toward the fundamental cell deletes
\(\beta\) from the root ideal.  Thus the adjacent cell has root ideal
\(I\setminus\{\beta\}\).  In particular, an exposed root is a minimal
element of \(I\).

We now isolate the result of Yu~\cite{Yu} used in this paper.  Let
\(U,U',U''\) be
three successive cells, with \(U,U'\) separated by \(H_p\) and
\(U',U''\) separated by \(H_{p'}\), oriented so that
\[
 p(U)<1<p(U'),\qquad p'(U')<1<p'(U''),                      \tag{1.4}
\]
and suppose
\[
             \langle p,(p')^\vee\rangle
             =\langle p',p^\vee\rangle=1.                 \tag{1.5}
\]

In the root ideal, passage from \(U''\) to \(U\) first deletes the
exposed root \(p'\) and then the exposed root \(p\); equivalently,
\(p'\) is exposed in \(I(U'')\) and \(p\) is exposed in
\(I(U'')\setminus\{p'\}\).  We call this a \emph{Yu move}.

\begin{theorem}[Yu's two-wall theorem]\label{thm:yu-two-wall}
For every unitary \(W\)-module \(E\),
\[
 A_E|_U>0\quad\Longleftrightarrow\quad A_E|_{U''}>0.        \tag{1.6}
\]
Consequently every cell obtained from \(U_0\) by a sequence of inverse
Yu moves is positive on every \(W\)-type and hence is spherical unitary.
\end{theorem}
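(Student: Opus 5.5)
We will prove (1.6) by following the operator \(A_E(\nu)\) along a path from \(U\) through \(U'\) to \(U''\), and then reduce the comparison to a rank-two computation in the \(A_2\)-subsystem spanned by \(p\) and \(p'\).

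\textbf{Step 1: restrict to the rank-two factor.} By (1.5), \(p\) and \(p'\) generate an \(A_2\)-subsystem \(\{p,p',p+p'\}\). Choose a reduced expression of \(w_0\) in which the three roots \(p,p',p+p'\) occur consecutively. Then (1.2) factors as
\[
A_E(\nu)=X(\nu)^{*}\,B_E(\nu)\,X(\nu),
\]
where \(B_E(\nu)\) is the product of the three factors for these roots, and \(X(\nu)\) collects all remaining factors. Along a generic path from \(U\) to \(U''\), the only hyperplanes crossed are \(H_p\) and \(H_{p'}\). So every factor in \(X\) stays invertible, and by Sylvester's law of inertia the signature of \(A_E\) changes exactly as the signature of \(B_E\) does.

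\textbf{Step 2: reduce to the \(A_2\) Hecke subalgebra.} The operator \(B_E\) is the long intertwiner of the \(A_2\) Hecke subalgebra, evaluated at the restricted parameter \((p(\nu),p'(\nu))\). Decompose \(E|_{S_3}\) into trivial, sign and two-dimensional reflection summands. On the trivial summand \(B_E\) is \(1\). On the sign summand it is
\[
\prod_{\gamma\in\{p,p',p+p'\}}\frac{1-\gamma(\nu)}{1+\gamma(\nu)} .
\]
Between \(U\) and \(U''\) both \(p\) and \(p'\) cross \(1\). We also check that \(p+p'\) stays above \(1\) throughout, using \(p+p'>p'>1\) in \(U''\) and the orientation in (1.4). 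Hence two sign factors flip, and the sign-type determinant keeps its sign. On the reflection summand, the \(2\times2\) matrix has determinant equal, up to a positive factor, to that same product over the roots. Its trace is computed from the explicit rank-two formula. We will check that at both ends it has the same sign, or equivalently that the eigenvalues never pass through zero in a way that changes the count.

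\textbf{Step 3: the passage through the middle cell.} This is the main obstacle. In \(U'\), \(A_E\) may be indefinite, so we cannot simply follow signatures along the path. Instead we track the signature jump at each wall with a Jantzen filtration.
\begin{itemize}
\item Crossing \(H_p\), the form changes sign exactly on the image of \(1-\sigma(s_p)\) under \(X\). That is, the \(-1\)-eigenspace of \(s_p\), which has simple zero order.
\item Crossing \(H_{p'}\), the same happens on the corresponding \(-1\)-eigenspace of \(s_{p'}\).
\end{itemize}
The \(A_2\) relation (1.5), \(s_ps_{p'}s_p=s_{p+p'}\), shows the two sign changes cancel: the negative part created at the first wall is exactly absorbed at the second. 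We expect to verify this cancellation directly on the two-dimensional reflection type using the explicit matrices above. If the Jantzen bookkeeping proves messy, we will fall back on the explicit rank-two matrix. We write \(B_E\) on the reflection summand as a \(2\times 2\) matrix rational in \(a=p(\nu)\), \(b=p'(\nu)\), and check directly that it is positive definite at both ends precisely under the same condition.

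\textbf{Step 4: the consequence.} On \(U_0\) every \(A_E\) is positive, since at \(\nu=0\) the operator is the identity and \(U_0\) contains no walls. Applying (1.6) along each inverse Yu move transfers positivity on every \(W\)-type to the new cell. By the Barbasch–Moy criterion, each such cell is spherical unitary. The case \(w_0\ne-1\) is handled identically on the Hermitian flats.
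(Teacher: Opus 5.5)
Your overall plan is close to the paper's: localize to the \(A_2\) block, split into \(S_3\)-types, and track the eigenvalues that cross the two walls. Step~4 is fine. But Step~3 carries the whole content of (1.6) and is left as an expectation, and what Steps~1--2 set up cannot supply it.

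\textbf{The local root system is misidentified.} Condition (1.5) gives \(\langle p,(p')^\vee\rangle=+1\), so \(p+p'\) is not a root. The third positive root is \(p-p'\), which is simple because \(p\) covers \(p'\), and \(s_ps_{p'}s_p=s_{p-p'}\). This matters because \((p-p')(\nu)=0\) on \(H_p\cap H_{p'}\). At the codimension-two face the factor for \(p-p'\) is therefore the identity, and the singular part of the block is \((1+s_2)(1+s_1)\) in the simple reflections of the local \(A_2\).

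In the configuration you wrote (\(p,p'\) simple in the local \(A_2\), \(p+p'\) a root), the singular part would instead be \((1+s_1)(1+2s_2)(1+s_1)\). That product vanishes on the reflection type; this is the kernel doubling of Lemma~\ref{lem:reflection-rank-two}(1). In that configuration the two-wall equivalence is actually false. In \(D_4\), pass from the balanced cell of \(R^+\setminus\Delta\) across \(H_{\alpha_c}\) and then \(H_{\alpha_a}\), with \(c\) the trivalent node and \(a\) an end node. By Theorem~\ref{thm:reflection-signature}, \(n_-(A_V)\) rises from \(0\) to \(2\). Your sign-summand and determinant checks in Step~2 come out the same in both configurations. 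Your appeal to ``the \(A_2\) relation'' in Step~3 applies equally to both. So neither can be the mechanism.

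\textbf{The congruence in Step~1 does not exist.} You cannot have \(A_E=X^*B_EX\): that would force \(B_E\) to be Hermitian, and it is not. The \(A_2\) longest element is not \(-1\) on its plane, and at the face \(B_E\) is essentially \((1+s_2)(1+s_1)\). One only has \(A_E=L\,B_E\,R\) with \(L,R\) invertible along the path. This determines the kernels of \(A_E\), but not the signs of its vanishing eigenvalues. So statements about the signature or positivity of \(B_E\) at the two ends (Step~2 and your fallback) say nothing about \(A_E\).

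\textbf{The missing idea.} The paper's argument uses only rank information, so it survives this problem. On the trivial, sign and reflection \(S_3\)-types, the ranks of \((1+s_{p'})(1+s_p)\) are \(1,0,1\), equal to those of \(1+s_p\). Hence \(\dim\ker A_E\) at a generic point of \(H_p\cap H_{p'}\) equals \(m=\dim E^{-s_p}\), the same as at generic points of either wall. So a single \(m\)-dimensional eigenvalue cluster of the Hermitian form \(A_E\) vanishes identically on both walls. Since \(\det A_E\) vanishes to order exactly \(m\) along each wall, the cluster form is \((p(\nu)-1)(p'(\nu)-1)\) times a nondegenerate Hermitian form. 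It therefore has the same signature in \(U\) and \(U''\), and the remaining eigenvalues do not vanish near the face. This is what has to replace your assertion that the negative part is ``exactly absorbed'' at the second wall.
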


\begin{proof}[Summary of Yu's argument]
Yu first chooses a simple system containing \(-p,p'\).  Equivalently one
may choose a reduced expression for \(w_0\) in which the two root factors
belonging to \(p,p'\) are consecutive.  This is his root-system lemma;
the exceptional rank-two case \(G_2\) is checked directly.  At a generic
point of either wall, if \(-1\) has multiplicity \(m\) in the relevant
reflection on \(E\), the determinant of (1.2) has order \(m\).  Hermitian
analytic perturbation shows that exactly \(m\) eigenvalues cross zero.

At the common codimension-two face the singular part is
\[
                         (1+s_{p'})(1+s_p).                 \tag{1.7}
\]
Condition (1.5) makes \(\langle s_p,s_{p'}\rangle\cong S_3\).  On the
three irreducible \(S_3\)-modules--trivial, sign, and reflection--one
checks respectively that the ranks of (1.7) are \(1,0,1\), equal to the
ranks of \(1+s_p\).  Thus the same \(m\)-dimensional cluster crosses at
the two walls and has the same sign in the two outer cells.  The
remaining eigenvalues do not vanish during this local passage, so their
signs are unchanged.  Hence the form is positive definite in one outer
cell if and only if it is positive definite in the other.  This proves
(1.6).
\end{proof}

\begin{definition}\label{d:locked}
The cells reachable from \(U_0\) by inverse Yu moves are called
\emph{Yu cells}.  An upper ideal is \emph{locked} if no Yu move can be
applied to it.
\end{definition}

Yu's note also gives labelled alcove diagrams, an involutive symmetry of
those diagrams, and the powers-of-two counts
\[
\begin{array}{c|c}
\text{coroot type}&\#\{\text{Yu cells}\}\\ \hline
B_n&1\qquad(n\ge2),\\
C_{2m-1},\ C_{2m},\ D_{2m}&2^{m-1},\\
E_7,E_8&8,16,\\
F_4,G_2&2,2.
\end{array}                                                  \tag{1.8}
\]
It observes, on the basis of the then available classification and
computations, that these cells exhaust the generic spherical unitary
dual, that each is an affine alcove, and that the diagram has a head-tail
symmetry.  The exhaustion is precisely the assertion for which we give a
new proof.  Only the uniform two-wall theorem above, not the case-by-case
classification, is used in our argument.

\subsection{A root-system interpretation of Yu's exponent}
\label{sec:yu-count-interpretation}

The powers of two in (1.8) admit a compact root-theoretic
interpretation.  Let \(R^\vee_{\rm sh}\) be the subsystem of short
coroots, let \(\Gamma_{\rm sh}\) be its Dynkin graph, and let
\(\nu(\Gamma_{\rm sh})\) denote its matching number, that is, the cardinality of the maximum subset of edges in the graph such that no two edges share a common vertex.  Equivalently,
\(\nu(\Gamma_{\rm sh})\) is the largest \(r\) for which a standard
parabolic subsystem of \(R^\vee_{\rm sh}\) has a factor \(A_2^r\).

\begin{proposition}[interpretation of the power of two]
\label{prop:yu-matching-exponent}
Let \(2^{k(R^\vee)}\) be the number of generic Yu cells, with cells
understood relative to the Hermitian Levi flats when \(w_0\ne-1\).  Then
\[
                  k(R^\vee)=\nu(\Gamma_{\rm sh}).          \tag{1.9}
\]
If
\(R^\vee_{\rm sh}=\Phi_1\times\cdots\times\Phi_s\), with exponents
\(e_{ij}\) and Coxeter numbers \(h_i\), then equivalently
\[
 k(R^\vee)
   =\sum_{i=1}^s\#\{j:e_{ij}<h_i/2\}.                     \tag{1.10}
\]
\end{proposition}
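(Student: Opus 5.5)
The plan is to prove (1.9) by comparing Yu's table (1.8) with a direct computation of the matching number $\nu(\Gamma_{\rm sh})$, type by type. I will then deduce (1.10) uniformly from spectral graph theory, without further case analysis.

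\emph{Step 1: the short-coroot subsystems.} For each coroot type I identify $R^\vee_{\rm sh}$:
\begin{itemize}
\item $B_n$: the short roots are $\pm e_i$, so $R^\vee_{\rm sh}\cong A_1^n$. Its graph has no edges, so $\nu=0$, matching the single Yu cell in (1.8).
\item $C_n$: the short roots are $\pm e_i\pm e_j$, so $R^\vee_{\rm sh}\cong D_n$.
\item Simply-laced types: $R^\vee_{\rm sh}=R^\vee$.
\item $F_4$: the short roots form a $D_4$.
\item $G_2$: the short roots form an $A_2$.
\end{itemize}

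\emph{Step 2: matching numbers.} I compute $\nu$ on the relevant trees.
\begin{itemize}
\item $D_{2m}$ is a path on $2m-2$ nodes with two extra leaves attached at the end. A matching can cover at most one of the two fork leaves together with their common neighbour. Pairing along the path from the far end therefore gives $m-1$ edges. Since there are $2m$ vertices and at least one fork leaf must be left uncovered, this is optimal.
\item The same argument gives $\nu(D_{2m-1})=m-1$.
\item $\nu(D_4)=1$, which settles $F_4$.
\item $\nu(A_2)=1$, which settles $G_2$.
\item $\nu(E_7)=3$ and $\nu(E_8)=4$, by exhibiting explicit matchings. For the upper bounds, the König and Hall deficiency argument on the bipartition of the tree shows that two vertices of $E_7$ must remain uncovered.
\end{itemize}
In each case the result agrees with the exponent in (1.8): $C_{2m-1},C_{2m}\mapsto m-1$, $D_{2m}\mapsto m-1$, $E_7\mapsto 3$, $E_8\mapsto 4$, $F_4,G_2\mapsto1$.

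\emph{Step 3: the cases $w_0\neq-1$.} The types $A_n$, $D_{2m+1}$ and $E_6$ do not appear in (1.8), and I expect them to be the main obstacle. For these I would work on the Hermitian Levi flats of Lemma~\ref{lem:boundary-hermitian-levi}. The restriction of the hyperplane arrangement to the flat $\{-w_0\nu=\nu\}$ is again governed by a folded root system, of type $BC$, $C$/$B$ or $F_4$-like respectively. I would identify the cell count there with one of the rows of (1.8) already treated: for example, a suitable type $B$ or $C$ row for $A_n$, and $C_{2m-1}$ for $D_{2m+1}$ via the standard Levi reduction mentioned in the introduction. I would then check that the count equals $\nu(A_n)=\lfloor (n+1)/2\rfloor$, $\nu(D_{2m+1})=m$, and $\nu(E_6)=3$, respectively. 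If Yu's counts on these flats do not match directly, the fallback is to count the Yu cells by hand on the flat, using the two-wall move of Theorem~\ref{thm:yu-two-wall}.

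\emph{Step 4: the exponent formula (1.10).} For a simply-laced component $\Phi$, the adjacency matrix of its Dynkin graph equals $2I-C_\Phi$, where $C_\Phi$ is the Cartan matrix. Its eigenvalues are $2\cos(\pi e/h_\Phi)$ for $e\in\operatorname{Exp}(\Phi)$. Hence the number of positive eigenvalues is $\#\{e:e<h_\Phi/2\}$.

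For a forest, the spectrum is symmetric about $0$. Moreover, the rank of the adjacency matrix equals twice the matching number. This follows from the expansion of the characteristic polynomial of a forest as the matching polynomial: the coefficient of $x^{n-2k}$ is $(-1)^k$ times the number of $k$-matchings.

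It follows that the number of positive eigenvalues equals $\nu$. Summing over the components $\Phi_i$ of $R^\vee_{\rm sh}$, which are all simply laced, gives (1.10).
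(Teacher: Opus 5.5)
\paragraph{What matches the paper.} Your Steps 1, 2 (for the types with $w_0=-1$) and 4 are essentially the paper's proof. You use the same short-coroot table, the same fork count $\nu(D_n)=\lfloor (n-1)/2\rfloor$, and the same comparison with (1.8). The spectral argument is also the paper's: for a forest the characteristic polynomial is the matching polynomial, so $\operatorname{rank}A_\Phi=2\nu$; the eigenvalues are $2\cos(\pi e/h)$; and bipartite symmetry finishes it.

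There is one slip. $E_7$ has seven vertices and a matching of size three, so exactly one vertex is left uncovered; this is the single zero eigenvalue coming from the exponent $9=h/2$. Your claim that two vertices must remain uncovered would give $\nu(E_7)\le 2$, contradicting your own value. No K\"onig--Hall argument is needed: $\nu\le\lfloor\#\text{vertices}/2\rfloor$ already gives the upper bounds for $E_7$ and $E_8$.

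\paragraph{The gap: Step 3.} Your target for $A_n$ is miscomputed. The Dynkin graph of $A_n$ is a path on $n$ vertices, so $\nu(A_n)=\lfloor n/2\rfloor$; already $A_1$ has no edge.

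More seriously, the folding plan is not an argument. The arrangement induced on the Hermitian flat is indeed a folded one; for $A_{2k-1}$ it is exactly the set of hyperplanes $\gamma(\nu)=1$, $\gamma\in C_k^+$. But unitarity on that flat is not governed by the equal-parameter Hecke algebra of the folded system, so nothing lets you read the count off a row of (1.8). For example, for $A_5$ the $C_3$ row predicts two cells, whereas for $\mathrm{GL}_6$ the only generic unitary cell on the flat is the fundamental one. Your own proposed identification of $D_{2m+1}$ with the $C_{2m-1}$ row gives exponent $m-1$, not the value $m$ you set out to verify.

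\paragraph{The paper's route, and a caveat.} The paper instead invokes Lemma~\ref{lem:boundary-hermitian-levi}. One conjugates $\nu$ into $V_J$ for a Levi $R_J=R_1\times\cdots\times R_t$ with every $w_0(R_i)=-1$. Unitarity is then equivalent to unitarity of the factor modules, so cell counts multiply and matching numbers add over the $R_i$.

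Note, however, that this computes the exponent of the Hermitian Levi $R_J$, not $\nu(\Gamma_{\rm sh})$ of the ambient graph. For $A_n$ the Levi is $A_1^t$, giving exponent $0$. Concretely, for $A_2$ the generic unitary parameters $(a,0,-a)$ form the single interval $0<a<\tfrac12$, although $\nu(\Gamma(A_2))=1$. So neither your fallback hand count nor the Levi reduction will confirm the ambient-graph values for $A_n$ ($n\ge2$) or odd $D_n$ (e.g.\ $D_3=A_3$). For $w_0\ne-1$, the identity has to be formulated and proved in terms of the Hermitian Levi.
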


\begin{proof}
The short-coroot subsystems and their matching numbers are
\[
\begin{array}{c|c|c}
R^\vee&R^\vee_{\rm sh}&\nu(\Gamma_{\rm sh})\\ \hline
A_n&A_n&\lfloor n/2\rfloor,\\
B_n&A_1^n&0,\\
C_n&D_n&\lfloor(n-1)/2\rfloor,\\
D_n&D_n&\lfloor(n-1)/2\rfloor,\\
E_6&E_6&3,\\
E_7&E_7&3,\\
E_8&E_8&4,\\
F_4&D_4&1,\\
G_2&A_2&1.
\end{array}                                                \tag{1.11}
\]
For \(D_n\), a matching uses at most one edge at the fork; deleting
the two vertices of that edge leaves a path and gives
\[
 \nu(D_n)=1+\left\lfloor\frac{n-3}{2}\right\rfloor
          =\left\lfloor\frac{n-1}{2}\right\rfloor .       \tag{1.12}
\]
The remaining entries are immediate from the diagrams.  For the systems
with \(w_0=-1\), comparison with (1.8) proves (1.9).  Types \(A\), odd
\(D\), and \(E_6\) follow by the Hermitian Levi reduction of
Lemma~\ref{lem:boundary-hermitian-levi}: matching number is additive on
the resulting Dynkin graph, just as the number of cells multiplies over
the irreducible factors.  This proves (1.9) in all types.

We prove (1.10) uniformly.  Let \(\Phi\) be one simply-laced irreducible
component, let \(A_\Phi\) be the adjacency matrix of its Dynkin tree,
and write \(n=\operatorname{rank}\Phi\).  For a forest, the nullity of
the adjacency matrix is the number of vertices left uncovered by a
maximum matching \cite{Meszaros}; hence
\[
             \operatorname{rank}A_\Phi=2\nu(\Gamma_\Phi). \tag{1.13}
\]
For completeness, this also follows immediately by expanding
\(\det(tI-A_\Phi)\): because the graph has no cycles, a nonzero
permutation term is a matching, and the least power of \(t\) is
\(t^{n-2\nu(\Gamma_\Phi)}\).

The standard Coxeter spectral formula \cite{Humphreys} gives the
adjacency eigenvalues
\[
                    2\cos\frac{\pi e_j}{h}.                \tag{1.14}
\]
The tree is bipartite, so its nonzero eigenvalues occur in opposite
pairs.  Equations (1.13)--(1.14) therefore give
\[
 \nu(\Gamma_\Phi)
  =\#\{\lambda(A_\Phi)>0\}
  =\#\{j:e_j<h/2\}.                                       \tag{1.15}
\]
Summing over the components proves (1.10).
\end{proof}

\begin{remark}
Formula (1.9) says that Yu's exponent is the maximal number of
vertex-disjoint short-coroot \(A_2\)-dominoes.  It does not assert that
the Yu-move graph is a hypercube.  Already in the first classical case
with \(k=2\), that graph is a path on four vertices rather than a square.
The power of two is therefore better viewed as arising from successive
rank-two doubling than from \(k\) globally commuting moves.
\end{remark}

\section{The reflection signature formula}
\label{sec:reflection-signature}

We now incorporate the reflection-signature argument. Assume in this
section that the longest Weyl element acts by \(-1\) on \(V\).
For a nondegenerate Hermitian form \(B\), write \(n_-(B)\) for the
number of its negative eigenvalues.

Give \(R^+\) its usual graded root-poset order and write
\(\operatorname{ht}(\beta)\) for the height.  For a plane \(P\subset V\),
the roots \(R\cap P\) form a (possibly reducible) rank-two root system,
and \(R^+\cap P\) is a positive system for it.  Moreover, order in this
rank-two positive system implies order in \(R^+\).  Indeed, a positive
linear combination of its simple roots is also a positive linear
combination of the original simple roots.

\begin{lemma}[rank-two localization]\label{lem:reflection-rank-two}
Let \(\beta_1,\beta_2\in R^+\).
\begin{enumerate}
\item If they are incomparable, a generic point of
\(H_{\beta_1}\cap H_{\beta_2}\) has
\(\dim\ker A_V=2\).
\item If \(\beta_2\) covers \(\beta_1\) in the root poset, a generic point
of their intersection has \(\dim\ker A_V=1\).
\end{enumerate}
In the first case the two transverse first Jantzen directions are
independent.  In the second they induce opposite orientations on the
common radical line.
\end{lemma}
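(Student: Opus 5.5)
The plan is to localize the long intertwiner (1.2) on \(V\) to the plane \(P\) spanned by \(\beta_1,\beta_2\), and then do an explicit rank-two computation.

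\emph{Step 1: reduction to \(P\).} Let \(\nu_0\) be a generic point of \(H_{\beta_1}\cap H_{\beta_2}\). Generic means \(\gamma(\nu_0)\neq 1\) for every \(\gamma\in R^+\setminus\{\beta_1,\beta_2\}\) and \(\nu_0\) regular. On \(V\), \(s_\gamma\) acts by \(-1\) exactly on the line \(\mathbb R\gamma\). Hence the factor \(1+\gamma(\nu)s_\gamma\) is invertible near \(\nu_0\) unless \(\gamma\in\{\beta_1,\beta_2\}\).

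I would choose a reduced expression of \(w_0\) in which the roots of \(R^+\cap P\) occur consecutively. This is the same kind of root-system lemma Yu uses: take a simple system for \(R\) adapted to \(P\), and write \(w_0=w_0^{P}\,w_{0,P}\) with \(w_{0,P}\) the longest element of \(W(R\cap P)\). Then near \(\nu_0\),
\[
A_V(\nu)=X(\nu)\,A^{P}_V(\nu)\,Y(\nu),
\]
with \(X,Y\) invertible. Here \(A^{P}_V\) is the rank-two long intertwiner for the subsystem \(R\cap P\), acting on \(V=P\oplus P^\perp\) and trivially on \(P^\perp\). So \(\dim\ker A_V(\nu_0)=\dim\ker A^P_P(\nu_0|_P)\). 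By the observation preceding the lemma, comparability in \(R^+\cap P\) agrees with comparability in \(R^+\), so the two cases may be read inside the rank-two positive system.

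\emph{Step 2: rank-two case analysis.} In a rank-two positive system (types \(A_1\times A_1,A_2,B_2,G_2\)), a direct check of coefficients shows two things. First, the only incomparable pair is the pair of simple roots \(\{a,b\}\). Second, a covering pair is \(\gamma\lessdot\gamma+\delta\) with \(\delta\) simple.

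For case (1) I would evaluate the \(2\times2\) product at the point where \(a(\nu)=b(\nu)=1\).
\begin{itemize}
\item In \(A_1\times A_1\) the product is \((1+s_a)(1+s_b)/4\), which is zero on \(P\).
\item In \(A_2,B_2,G_2\) I would write the operators in the basis \(a,b\). The outer factors are the projections \((1+s_a)/2\) and \((1+s_b)/2\); the middle factors have \(\gamma(\nu)>1\) and are invertible. I would then verify that the composite kills \(P\). A good check is the determinant: \(\det\) vanishes to order \(1\) along each of \(H_a\) and \(H_b\), so it has order \(2\) at \(\nu_0\). Together with the rank computation this gives \(\dim\ker=2\).
\end{itemize}
For case (2) the same determinant count gives order \(2\). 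I would then show the rank is \(1\) by exhibiting a vector not killed, namely the image of the non-singular directions. For instance, in \(A_2\) at \(a(\nu)=1\), \((a+b)(\nu)=1\), one has \(b(\nu)=0\) and the factor \(1+0\cdot s_b\) is the identity.

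\emph{Step 3: Jantzen directions.} Write \(\nu=\nu_0+t\xi\). The first Jantzen form on the radical is the restriction of \(\frac{d}{dt}A_V\) to \(\ker A_V(\nu_0)\). Crossing \(H_{\beta_i}\) transversally, only the factor \(1+\beta_i(\nu)s_{\beta_i}\) degenerates. Its derivative is \(\beta_i(\xi)s_{\beta_i}\), which is a nonzero multiple of the identity on the \(-1\) line, conjugated by the invertible outer factors.

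In case (1) the two crossing directions therefore contribute on two different lines of the \(2\)-dimensional radical, namely the transported images of \(\mathbb R\beta_1\) and \(\mathbb R\beta_2\); this gives independence. In case (2) both contribute on the same radical line. Computing their signs in the explicit rank-two model, moving from the region \(\beta_1<1<\beta_2\) to the other sides, should show they have opposite orientation. The intuition is that crossing \(H_{\beta_2}\) after \(H_{\beta_1}\) along the covering pair undoes the eigenvalue flip.

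\emph{Main obstacle.} I expect Steps 2–3 to be the hardest part: the uniform treatment of the sign of the Jantzen derivative in case (2) across \(A_2,B_2,G_2\). I would first try to reduce it to a sign of \(\beta_2(\xi)-\beta_1(\xi)\) along the line \(H_{\beta_1}\cap H_{\beta_2}\)-normal. If that fails, I would fall back on explicit \(2\times2\) matrices in each of the three types.
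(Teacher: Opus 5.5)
\textbf{Verdict: genuine gap in Step~3, case (2).} Your route is the paper's. You isolate the \(R\cap P\)-block in a reduced word for \(w_0\); the paper writes \(w_0=xw_Py\), with \(y\) carrying the simple roots of \(R^+\cap P\) to simple roots. Your ``simple system adapted to \(P\)'' must be translated back into such a factorization, because (1.2) is tied to the original \(R^+\). Then kernel dimensions are read off from explicit rank-two products.

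\textbf{The gap.} The opposite-orientation claim is left as ``should show'', and the mechanism you sketch cannot work.
\begin{enumerate}
\item At a generic \(\nu_0\in H_{\beta_1}\cap H_{\beta_2}\) the radical is a line. So exactly one eigenvalue \(\lambda(\nu)\) of the Hermitian family \(A_V\) is small, and it is real-analytic near \(\nu_0\).
\item By (12.3), \(\det A_V=\prod_{\beta\in R^+}(1-\beta(\nu))/(1+\beta(\nu))\). Hence \(\lambda=u\,t_1t_2\) with \(t_i=\beta_i(\nu)-1\) and \(u(\nu_0)\neq0\).
\item So \(A_V\) has no linear term at all on the common radical line at \(\nu_0\). ``Each singular factor contributes a first-order term on the same line'' cannot produce the signs.
\item Computing signs inside the bare rank-two block is not well posed either: for \(A_2\) that block is not self-adjoint at \(\nu_0\).
\end{enumerate}
The missing ingredient is the chamber constraint. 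Since \(\alpha=\beta_2-\beta_1\) is simple, \(t_2-t_1=\alpha(\nu)\geq0\) on the closed chamber. Near \(\nu_0\), the chamber part of \(H_{\beta_1}\) therefore has \(t_2>0\), and that of \(H_{\beta_2}\) has \(t_1<0\). The transverse slopes \(ut_2\) and \(ut_1\) of \(\lambda\) then have opposite signs. This is what your guess about the sign of \(\beta_2(\xi)-\beta_1(\xi)\) should become, and it uses nothing beyond \(\dim\ker=1\).

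\textbf{Your genericity hypothesis cannot be met.}
\begin{enumerate}
\item In case (2), \(\nu_0\) lies on the chamber wall \(\alpha=0\), so it is never regular.
\item Take a \(B_2\) or \(G_2\) plane with long simple root \(a\) and short \(b\), and the cover \(a\lessdot a+b\). Then \(a+2b=2(a+b)-a\) is an affine combination with coefficient sum \(1\). So \(H_a\cap H_{a+b}\subset H_{a+2b}\), and also \(\subset H_{a+3b}\) in \(G_2\).
\item Only roots outside \(P\) can be avoided, as in the paper. The block then has three or four singular factors, and your ``determinant order \(2\)'' is wrong.
\end{enumerate}
The argument above survives. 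Write \(\lambda=u\prod_k\tau_k\) with \(\tau_k=(a+kb)(\nu)-1=(a(\nu)-1)+k\,b(\nu)\). The \(\tau_k\) increase in \(k\) on the chamber, so the orientations alternate along the chain.

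\textbf{Case (1) needs the same repair.} Your ``derivative of the singular factor, conjugated by the others'' is only the single-wall computation. At the codimension-two point, the derivatives of the invertible middle factors also contribute. Instead, let \(S\) be the two-dimensional small-eigenvalue block. Then \(S(\nu_0)=0\) and \(\det S=t_1t_2\cdot(\text{unit})\). So the linear part \(t_1Q_1+t_2Q_2\) has determinant \(ct_1t_2\) with \(c\neq0\). This forces \(Q_1\) and \(Q_2\) to have rank one on independent lines.

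\textbf{Comparability.} Comparability in \(R^+\) does not imply comparability in \(R^+\cap P\). In \(D_4\) (Bourbaki labels, \(\alpha_2\) the branch node), \(\alpha_2<\alpha_1+\alpha_2+\alpha_3+\alpha_4\) in \(R^+\). Yet these are the two simple roots of the \(A_2\) they span. What case (2) actually needs is that \(\beta_2-\beta_1\) is a simple root of \(R\) lying in \(P\), hence simple in \(R^+\cap P\). The same example shows that the simple roots of \(R^+\cap P\) can be comparable in \(R^+\). So the existence of \(y\) should come from the parabolicity of \(R\cap P\), as you propose, rather than from an antichain lemma.
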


\begin{proof}
Let \(P=\mathbb R\beta_1+\mathbb R\beta_2\), and choose the simple roots
\(\beta'_1,\beta'_2\) of \(R^+\cap P\).  They are incomparable.  The
standard antichain lemma for Weyl groups (equivalently, the two-element
case of Sommers' lemma~\cite{Sommers}) gives \(y\in W\) carrying them to simple roots
\(\gamma_1,\gamma_2\).  If \(w_P\) is the longest element of the dihedral
subgroup they generate, factor
\[
                              w_0=xw_Py.                    \tag{2.0}
\]
After braid cancellations, the root-intertwiner factors whose roots lie
in \(P\) form exactly the \(w_P\)-block; every other factor is invertible
at a point of \(H_{\beta_1}\cap H_{\beta_2}\) avoiding the remaining
root walls.  Such points exist because a finite union of proper affine
subspaces cannot cover that intersection.  Hence the kernel and its
first transverse form are those of the rank-two block.

There are only \(A_1\times A_1,A_2,B_2,G_2\).  Multiplication of their
two-dimensional reflection matrices gives kernel dimensions two for an
incomparable pair and one for a cover pair.  Symmetric elimination gives
independent transverse linear terms in the first case and opposite
linear terms on the common kernel in the second.  This proves all four
claims.  Notice that the second simple root in the localization is
\(\gamma_2\); this corrects the harmless repeated \(\gamma_1\) in the
statement of Lemma 5.5 of the original note.
\end{proof}

\begin{theorem}[reflection signature \cite{Crisan}]
\label{thm:reflection-signature}
For a regular dominant parameter \(\nu\), put
\(I(\nu)=\{\beta\in R^+:\beta(\nu)>1\}\).  Then
\[
 n_-\bigl(A_V(\nu)\bigr)
   =\sum_{\beta\in I(\nu)}(-1)^{\operatorname{ht}(\beta)+1}. \tag{2.1}
\]
In particular, the reflection form is positive definite if and only if
\(I(\nu)\) is balanced.
\end{theorem}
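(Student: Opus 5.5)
Starting from the fundamental cell, where \(n_-(A_V)=0\) and \(I=\emptyset\), I will propagate the formula across walls. For \(\nu\) in \(U_0\), every factor of (1.2) has \(0\le\beta(\nu)<1\), so the form is positive; one can see this by deforming \(\nu\) to \(0\) inside \(U_0\), where \(A_V\) is the identity and no determinant zero is met. The right side of (2.1) is \(0\) there.

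Next take a generic point of a single wall \(H_\beta\). By the factorization argument in the proof of Lemma~\ref{lem:reflection-rank-two}, now with a rank-one block, the kernel of \(A_V\) is the \((-1)\)-eigenspace of \(s_\beta\) on \(V\), which is one-dimensional. Hermitian analytic perturbation then says that exactly one eigenvalue crosses zero, and it does so transversally.

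It follows that \(n_-\) changes by \(\pm1\) across each wall, and the sign \(\delta(\beta,U)\) of the change depends a priori on the cell. The claim is that \(\delta=\epsilon(\beta)=(-1)^{\operatorname{ht}(\beta)+1}\) whenever one crosses \(H_\beta\) from \(\beta<1\) to \(\beta>1\), that is, whenever \(\beta\) is added to the ideal. Once this holds for every crossing, (2.1) follows by induction along a gallery from \(U_0\) to the cell of \(\nu\) that adds roots one at a time; such a gallery exists because each nonempty upper ideal has an exposed root. The sign of the crossing eigenvalue is the sign of the first Jantzen form on the radical line. That form is a real-analytic, nonvanishing function on the connected components of \(H_\beta\) minus the other walls. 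So \(\delta(\beta,\cdot)\) is constant on each wall face, and it can only change where \(H_\beta\) meets another wall \(H_{\beta'}\) inside \(C\).

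The key step, and the main obstacle, is to show this local sign is globally constant and equals \(\epsilon(\beta)\). At a codimension-two face \(H_\beta\cap H_{\beta'}\), Lemma~\ref{lem:reflection-rank-two} gives two cases.
\begin{itemize}
\item If \(\beta,\beta'\) are incomparable, the two Jantzen directions are independent. The sign of crossing \(H_\beta\) is therefore unchanged as one passes from one side of \(H_{\beta'}\) to the other.
\item If \(\beta'\) covers \(\beta\), the kernel is one line and the two transverse forms have opposite orientations. Going around the face, the crossings of \(H_\beta\) and \(H_{\beta'}\) then contribute opposite signs. This is consistent with \(\epsilon(\beta')=-\epsilon(\beta)\), since \(\operatorname{ht}(\beta')=\operatorname{ht}(\beta)+1\) for a cover.
\end{itemize}
A comparable pair that is not a cover cannot meet generically inside \(C\) near a wall face of both cells, because an intermediate root wall intervenes; I would check this via the rank-two localization as well.

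Consistency alone does not fix the signs, so one normalization is still needed. For a simple root \(\alpha\), crossing \(H_\alpha\) from \(U_0\) is an \(\mathfrak{sl}_2\)-type computation: the factor \((1+\alpha(\nu)s_\alpha)/(1+\alpha(\nu))\) on the \(-1\) eigenline equals \((1-\alpha(\nu))/(1+\alpha(\nu))\), which becomes negative. So \(\delta=+1=\epsilon(\alpha)\). The cover rule then propagates \(\delta=\epsilon\) up the root poset along cover chains, and the incomparable rule transports it across the remaining codimension-two faces. Because each wall face is connected to one of these configurations through the chamber, this gives \(\delta(\beta,U)=\epsilon(\beta)\) everywhere.

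The delicate point is showing that every wall face of \(H_\beta\) is linked to the base computation through such codimension-two moves without any sign ambiguity. I would handle this by tracking the gallery adding roots in order of height, and applying the cover case to the pair consisting of \(\beta\) and a root it covers. The balanced criterion is then immediate: \(A_V\) is positive definite if and only if \(n_-=0\).
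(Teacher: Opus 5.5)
\textbf{Verdict.} Your architecture is the paper's: a $\pm1$ jump at each generic wall point, constancy of the jump sign along a wall through incomparable codimension-two crossings, sign reversal along covers, and induction on height. The base case, however, fails as you state it.

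\textbf{The base case.} In rank at least two, a simple-root wall $H_\alpha$ is never a wall of $U_0$. Since $\theta-\beta$ is a nonnegative combination of simple roots for the highest root $\theta$, one has $U_0=C\cap\{\theta<1\}$, the fundamental alcove. Its only reducibility wall is $H_\theta$: the cell across it has ideal $\{\theta\}$, while $\{\alpha\}$ is not an upper ideal. So there is no crossing of $H_\alpha$ from $U_0$.

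The single-factor remark cannot replace that crossing. For \emph{every} root $\beta$, the $\beta$-factor is negative on its $-1$ line beyond $H_\beta$, yet the jump sign alternates with height. In $B_2$, for instance, the $e_1$-factor is negative there but crossing $H_{e_1}$ lowers $n_-$ from $1$ to $0$. So the sign of one factor says nothing about the Jantzen form of the whole product. The paper's proof makes the same slip when it normalizes at ``the crossing adjacent to the fundamental cell'' for a simple root.

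The repair is to normalize at the top:
\begin{itemize}
\item On $H_\theta\cap\operatorname{int}C$ every other positive root is $<1$. Crossing it from $U_0$ therefore forces $j(\theta)=+1$, since $n_-$ cannot become negative.
\item Every $\beta\ne\theta$ is covered by some root. The cover rule then propagates downward and gives $j(\beta)=(-1)^{\operatorname{ht}\theta-\operatorname{ht}\beta}$.
\item This equals $\epsilon(\beta)$ precisely because $\operatorname{ht}\theta=h-1$ is odd, which holds since $w_0=-1$ makes all exponents odd.
\end{itemize}
This is where the hypothesis $w_0=-1$ enters the sign count. Your bookkeeping never uses it, which is a symptom that the simple-root normalization was unsupported.

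\textbf{The geometry of the cover step.} Suppose $\beta\lessdot\beta'$ with $\beta'-\beta=\alpha$ simple. Then $\beta'(\nu)-\beta(\nu)=\alpha(\nu)>0$ on $\operatorname{int}C$. More generally, no two comparable roots have walls meeting inside $C$. The cover intersection lies on the chamber wall $\alpha=0$, as the paper notes in the proof of Lemma~\ref{lem:one-wall}. You therefore cannot ``go around'' it inside $C$, and your intermediate-wall remark about non-cover pairs is beside the point.

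What works is the following. At a generic point $\nu_0$ of $\{\alpha=0\}\cap H_\beta$, the kernel of $A_V$ is a line, by part (2) of Lemma~\ref{lem:reflection-rank-two}. So only one eigenvalue is small near $\nu_0$. Inside $C$ this eigenvalue vanishes simply on the walls of the $\alpha$-string through $\beta$, and these walls are crossed consecutively. Hence successive jumps alternate, giving $j(\beta+\alpha)=-j(\beta)$.

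The same observation makes your constancy step clean. The set $H_\beta\cap\operatorname{int}C$ is convex and meets only walls of incomparable roots. The incomparable rule alone therefore makes $j(\beta)$ well defined.

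With these two repairs your argument coincides with the paper's.
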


\begin{proof}
By Section~\ref{sec:yu-summary}, the normalized long intertwiner is a
real-analytic Hermitian matrix away
from its scalar poles.  On a generic point of \(H_\beta:\beta(\nu)=1\),
display the factor belonging to \(\beta\) in a reduced expression.  All
other factors are invertible, and the \((-1)\)-space of \(s_\beta\) on
\(V\) is one-dimensional.  Hence the form has a one-dimensional radical.
The determinant formula has a simple zero there, so the corresponding
eigenvalue has a nonzero first derivative.  Thus crossing \(H_\beta\)
changes the negative index by a sign \(j(\beta)=\pm1\).

It remains to determine that sign and to see that it is independent of
the generic point of the wall.  Lemma~\ref{lem:reflection-rank-two} gives
the following two facts:
\begin{enumerate}
\item if the two roots are incomparable, the form has a
two-dimensional radical and the two transverse first Jantzen forms are
independent; moving around the codimension-two intersection does not
change either wall orientation;
\item if \(\beta'\) covers \(\beta\) in the root poset, the two walls
have the same radical line at their intersection and their transverse
orientations are opposite.
\end{enumerate}
Real-analytic Hermitian perturbation justifies following the vanishing
eigenvalues and the rank-one determinant factor makes every generic
crossing transverse.

The generic pieces of a fixed wall are connected through incomparable
rank-two crossings, so the first fact makes \(j(\beta)\) well-defined.
For a simple root the crossing adjacent to the fundamental cell changes
one positive eigenvalue to a negative one, hence \(j(\alpha)=1\).  The
second fact reverses the sign along every root-poset cover.  Induction on
height therefore gives
\[
                         j(\beta)=(-1)^{\operatorname{ht}(\beta)+1}.
\]
Starting with the positive form at the origin and summing the wall jumps
along \(t\nu\) proves (2.1).
\end{proof}

\begin{remark}
The assumption \(w_0=-1\) is sufficient for the systems where the formula
is used directly below: even \(D\), \(E_7,E_8\), \(B,C,F_4,G_2\).  Types
\(A\), odd \(D\), and \(E_6\) are obtained by Levi inheritance, so no
extension of this proof to \(w_0\ne-1\) is needed.
\end{remark}

\section{The uniform reduction principle}
\label{sec:uniform-reduction}

Let $I$ denote a root ideal in $R$, as before.

\begin{proposition}[uniform reduction principle]
\label{prop:uniform-reduction}
Assume the following three assertions for an irreducible root system
\(R\):
\begin{enumerate}
\item the reflection form is positive on the cell of \(I\) if and only if
      \(I\) is balanced;
\item positivity on every fixed \(W\)-type is equivalent at the two ends
      of a Yu move;
\item every nonempty balanced locked ideal fails positivity on at least
      one irreducible constituent of \(\operatorname{Sym}^2V\).
\end{enumerate}
Then the reflection and symmetric-square forms are simultaneously
positive exactly on Yu's cells.  Consequently they characterize generic
spherical unitarity.
\end{proposition}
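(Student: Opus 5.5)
The argument runs over all cells in the interior of the dominant chamber, and there are two inclusions to prove.

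\emph{Yu cells are positive.} The fundamental cell \(U_0\) (empty ideal) is positive on every \(W\)-type, because at \(\nu=0\) the operator (1.2) is the identity and no wall is crossed on the segment from \(0\). By assertion (2), positivity on any fixed \(W\)-type is preserved by each inverse Yu move. Induction on the number of moves then shows that every Yu cell is positive on every \(W\)-type, in particular on \(V\) and on each constituent of \(\operatorname{Sym}^2V\).

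\emph{Positive cells are Yu cells.} Let \(U\) be a cell whose reflection and symmetric-square forms are all positive, and let \(I=I(U)\). I apply Yu moves (in the forward direction, toward \(U_0\)) as long as possible. Each move deletes two roots from the ideal, so the process terminates at a locked ideal \(I'\) belonging to some cell \(U'\). By (2), applied to the finitely many relevant types \(V\) and the constituents of \(\operatorname{Sym}^2V\), the cell \(U'\) is again positive on all of them. By (1), \(I'\) is balanced. If \(I'\) were nonempty, it would be a nonempty balanced locked ideal, and (3) would give a constituent of \(\operatorname{Sym}^2V\) on which positivity fails, a contradiction. Hence \(I'=\emptyset\), so \(U'=U_0\). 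Reversing the chain of moves exhibits \(U\) as obtained from \(U_0\) by inverse Yu moves, i.e. \(U\) is a Yu cell.

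\emph{Characterization of unitarity.} By the Barbasch--Moy criterion, a generic spherical module is unitary exactly when its forms are positive on all \(W\)-types. The first inclusion shows that every Yu cell is unitary. Conversely, a unitary cell is in particular positive on \(V\) and \(\operatorname{Sym}^2V\), so by the second inclusion it is a Yu cell. Thus both conditions cut out exactly the Yu cells.

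\emph{Main obstacle.} The step needing care is the descent: forward Yu moves must be well defined at the level of cells. A Yu move is specified combinatorially on ideals (exposed roots \(p'\), then \(p\)), so I must make sure each move corresponds to an actual crossing of walls between realizable cells, with the intermediate cell existing. This is built into the definition, since exposedness refers to actual walls of cells. I would also remark that reaching \(\emptyset\) by some sequence suffices; no uniqueness of the locked endpoint is needed, because the contradiction in the second inclusion applies to whatever locked ideal the process ends at.
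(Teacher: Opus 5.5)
Your proposal is correct and follows the paper's proof: you reduce a cell that passes the tests by Yu moves to a locked endpoint, carry positivity there with (2), use (1) and (3) to force the endpoint to be empty, and then Yu's theorem and the Barbasch--Moy criterion give the unitarity characterization. The only difference is cosmetic. You get balance of the terminal ideal by applying (1) at the endpoint, rather than at the starting cell as the paper does, and this is if anything slightly more directly justified by the stated assumptions.
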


\begin{proof}
Suppose first that the reflection and all symmetric-square constituent
forms are positive.  By (1), \(I(\nu)\) is balanced.  Repeatedly apply Yu
moves.  The process terminates because the cardinality decreases by two.
By (2), all the test forms remain positive.  The terminal ideal is
balanced and locked, so (3) forces it to be empty.  Thus the original
cell is connected to the fundamental cell by Yu moves, which is the
definition of a Yu cell.  Yu's theorem makes it unitary.  Conversely, a
unitary module has positive form on every finite Weyl-group type.
\end{proof}

\begin{remark}
One might ask whether the reduction is \emph{confluent}: if two
different Yu moves can be applied to the same ideal, must the two
resulting reduction paths eventually reach the same locked ideal?  We do
not need such a statement.  Starting from a balanced ideal
\(I\), choose any available Yu move and continue until a locked ideal
\(K\) is reached.  The process terminates because every Yu move removes
two roots.  Positivity of each test form is preserved in both directions
across every Yu move.  Consequently, if all the test forms are positive
on \(I\), they remain positive on \(K\).  But every nonempty balanced
locked ideal is detected by one of the test forms, in the sense that
this form is not positive definite there.  It follows that \(K\) must
be the empty ideal.  This argument applies to every possible sequence
of Yu moves, even without knowing in advance that different sequences
have the same endpoint.

Likewise, we do not need equality of the complete signatures of the
Hermitian forms at the two ends of a Yu move.  On a generic cell the
form is nondegenerate, and its complete signature is the pair
\((n_+,n_-)\) of numbers of positive and negative eigenvalues.  These
numbers may change under a Yu move.  The proof uses only the weaker
equivalence
\[
          A_E|_U>0\quad\Longleftrightarrow\quad A_E|_{U''}>0
\]
for each test \(W\)-type \(E\).
\end{remark}

\section{The one-wall recursion for the symmetric square}

Let \(R\subset V\) be a simply-laced reduced root system with Weyl group
\(W\).  Assume that the reflection-signature theorem applies to every
orthogonal subsystem \(R\cap\beta^\perp\) occurring below.  This holds in
the two cases used later: \(D_{2m}\), where the subsystem is
\(A_1\times D_{2m-2}\), and \(E_8\), where it is \(E_7\).  Normalize
parameters so that the reducibility walls are \(\alpha(\nu)=1\). For a
real \(W\)-module \(E\), let \(A_E(\nu)\) be the
normalized Hermitian long-intertwining form. Its scalar denominators are
positive in the dominant chamber.

For \(\beta\in R^+\), put
\[
 R_\beta=R\cap\beta^\perp,\qquad V_\beta=\beta^\perp .
\]
The factor of \(V_\beta\) orthogonal to the span of \(R_\beta\) is retained
as a trivial \(W(R_\beta)\)-module. On the wall \(\beta(\nu)=1\), let
\(\bar\nu_\beta\) be the restricted parameter:
\(\gamma(\bar\nu_\beta)=\gamma(\nu)\) for \(\gamma\in R_\beta\).
We write
\(v\odot w=v\otimes w+w\otimes v\) for the symmetric tensor product;
its normalization will play no role.
We write \(\gamma\lessdot\beta\) when \(\beta\) covers \(\gamma\) in
the positive-root poset.

\begin{lemma}[codimension-two rank table]\label{lem:rank-two-table}
Let two reducibility walls meet, and let \(Q_\beta\) denote the first
Jantzen quotient form associated with the \(\beta\)-wall. Locally:
\[
\begin{array}{c|c|c}
(\beta,\gamma)&\text{rank behaviour on }\operatorname{Sym}^2V
 &\text{effect on }Q_\beta\\ \hline
0&\text{two commuting }A_1\text{ factors}
 &\text{one reflection wall in }R_\beta\\
-1&A_2,\ \beta,\gamma\text{ incomparable}
 &\text{no degeneracy and no signature change}\\
1&A_2,\ \beta-\gamma\text{ a root}
 &\text{common radical; opposite wall orientations.}
\end{array}
\]
Thus the only interior walls of \(Q_\beta\) are the walls of \(R_\beta\).
\end{lemma}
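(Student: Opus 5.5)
The plan is to localize exactly as in the proof of Lemma~\ref{lem:reflection-rank-two}, and then compute inside the rank-two block. Take two walls \(H_\beta,H_\gamma\) with \(\beta\ne\gamma\) positive. Since \(R\) is simply laced, \(\langle\beta,\gamma^\vee\rangle\in\{0,\pm1\}\). Put \(P=\mathbb R\beta+\mathbb R\gamma\); then \(R\cap P\) is \(A_1\times A_1\) or \(A_2\). By Sommers' antichain lemma, choose \(y\in W\) carrying the simple roots of \(R^+\cap P\) to simple roots, and factor \(w_0=xw_Py\) as in (2.0). At a generic point of \(H_\beta\cap H_\gamma\), every factor outside the \(w_P\)-block is invertible. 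So the kernel of \(A_{\operatorname{Sym}^2V}\) and its first Jantzen data are determined by the block
\[
(1+s_\beta)\quad\text{and}\quad(1+s_\gamma)
\]
acting on \(\operatorname{Sym}^2V\) restricted to the dihedral group \(\langle s_\beta,s_\gamma\rangle\). I then decompose \(\operatorname{Sym}^2V\) under this group, and split into cases by the value of \((\beta,\gamma)\).

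\emph{Case \((\beta,\gamma)=0\).} The group is \(\mathbb Z/2\times\mathbb Z/2\) and the two factors commute. The \(\beta\)-radical on \(\operatorname{Sym}^2V\) is the \((-1)\)-eigenspace of \(s_\beta\), namely \(\beta\odot V_\beta\). On it \(s_\gamma\) acts through its action on \(V_\beta\), because \(\gamma\in R_\beta\). The \(\gamma\)-wall therefore restricts to the quotient form \(Q_\beta\) as a reflection wall of \(R_\beta\) acting on \(V_\beta\). This is the first row of the table.

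\emph{Case \((\beta,\gamma)=-1\).} Here \(\beta+\gamma\) is a root, so \(\beta\) and \(\gamma\) are the simple roots of an \(A_2\) inside \(P\) and are incomparable. The block has the form \((1+\beta(\nu)s_\beta)(1+\cdots)(1+\gamma(\nu)s_\gamma)\), with the middle factor invertible. I will decompose \(\operatorname{Sym}^2V\) under \(S_3\) into trivial, sign and reflection pieces and check two things. First, the radical of \(Q_\beta\) does not jump along \(H_\gamma\). Second, by the transverse-independence computation of Lemma~\ref{lem:reflection-rank-two}(1), carried out on each \(S_3\)-isotypic piece, the first Jantzen form on \(\ker(1+s_\beta)\) stays nondegenerate across \(H_\gamma\). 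Hence \(Q_\beta\) has no wall there.

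\emph{Case \((\beta,\gamma)=1\).} Then \(\beta-\gamma\) is a root and one of the two covers the other in \(R^+\cap P\). Computing the product of the three \(A_2\) factors isotypic piece by isotypic piece, as in (1.7), shows that on the sign and reflection pieces the two walls share the radical line. The cover statement of Lemma~\ref{lem:reflection-rank-two}(2) then gives opposite transverse orientations.

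To get the final sentence, I combine the cases. The walls of \(Q_\beta\) on \(H_\beta\) come only from intersections with other \(H_\gamma\). Case \(-1\) contributes nothing. Case \(1\) contributes no change of the restricted form, since the common radical is absorbed with opposite orientation. Case \(0\) contributes exactly the walls \(\gamma\in R_\beta\).

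The main obstacle is case \(1\). There one has to verify that the opposite orientations really cancel in the quotient form \(Q_\beta\), rather than producing a second-order Jantzen term. I would settle this by an explicit \(3\times3\) computation on the six-dimensional \(\operatorname{Sym}^2\) of the \(A_2\) plane together with its complement, and use the \(S_3\)-rank table of Theorem~\ref{thm:yu-two-wall} to control the kernel ranks.
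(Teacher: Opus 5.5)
Your overall route is the paper's: localize to the rank-two block, decompose \(\operatorname{Sym}^2V\) under \(\langle s_\beta,s_\gamma\rangle\), and import orientations from Lemma~\ref{lem:reflection-rank-two}. Your orthogonal case is fine. Two steps, however, do not work as written.

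\emph{The case \((\beta,\gamma)=-1\).} You never establish the fact that makes this row true. Writing \(V=U\oplus T\), one has \(\operatorname{Sym}^2V|_{S_3}=(\mathbf1\oplus U)\oplus U^{\oplus\dim T}\oplus\operatorname{Sym}^2T\), so the sign character does not occur. Your claim that, by Lemma~\ref{lem:reflection-rank-two}(1), the Jantzen form stays nondegenerate ``on each isotypic piece'' is false on a sign piece. Put \(b=\beta(\nu)\), \(c=\gamma(\nu)\). On a sign line the block \((1+c\,s_1)(1+(b+c)s_2)(1+b\,s_1)\) is the scalar \((1-c)(1-b-c)(1-b)\). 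Its \(b\)-derivative at \(b=1\) is \(c(1-c)\), which vanishes and changes sign exactly on \(H_\gamma\). The row holds only because \(\operatorname{Sym}^2U=\mathbf1\oplus U\) excludes \(\Lambda^2U=\operatorname{sgn}\). That decomposition is precisely the content of the paper's proof, and your plan omits it.

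\emph{Row \(1\) and the final sentence.} Suppose \(H_\beta\) and \(H_\gamma\) met with \(\beta-\gamma\in R\). The block at the meeting point is Yu's \((1+s_2)(1+s_1)\), of rank one on each copy of \(U\), while the determinant formula (12.3) contributes both factors \(1-\beta(\nu)\) and \(1-\gamma(\nu)\). Hence the surviving small eigenvalue vanishes like \((\beta(\nu)-1)(\gamma(\nu)-1)\), and the first Jantzen form along \(H_\beta\) is proportional to \(\gamma(\nu)-1\). So \(Q_\beta\) does degenerate and change sign there; this second-order vanishing is what ``opposite orientations'' means. The \(3\times3\) check you propose would therefore refute the cancellation you expect.

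The real reason row \(1\) creates no interior wall of \(Q_\beta\) is that it never occurs in the open chamber. If \(\beta<\gamma\) in the root poset, then \(\gamma-\beta\) is a nonzero nonnegative combination of simple roots, so \(\gamma(\nu)>\beta(\nu)\) at every regular dominant \(\nu\). Thus comparable walls meet only on a chamber wall or a smaller face. That is exactly where the paper uses row \(1\), in the cover induction of Lemma~\ref{lem:one-wall}.

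The same observation repairs your inference in case \(-1\). The implication ``\(\beta+\gamma\in R\Rightarrow\beta,\gamma\) incomparable'' is false in general: in \(D_4\), \(e_2-e_3<e_1+e_3\) in the root poset, yet their inner product is \(-1\). However, walls meeting at a regular dominant point are necessarily incomparable, so only rows \(0\) and \(-1\) arise in the interior.
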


\begin{proof}
The orthogonal complement of the plane generated by the two roots only
adds trivial summands, so the essential calculation is in rank (2). In the orthogonal
case write
\[
 V=\mathbb R\beta\oplus\mathbb R\gamma\oplus T
\]
and inspect the four sign characters in the symmetric square. In the
nonorthogonal cases the group is \(S_3\). If \(U\) is its two-dimensional
reflection representation, then
\[
 \operatorname{Sym}^2(U\oplus T)
 =(\mathbf1\oplus U)\oplus U^{\oplus\dim T}
   \oplus\operatorname{Sym}^2T.
\]
On \(\mathbf1\) and \(U\), the ranks of
\((1+s_2)(1+s_1)\) are respectively \(1\) and \(1\), the latter equal to
the rank of either single singular factor on \(U\). This proves the rank
claims. The incomparable and comparable placements in the \(A_2\) string
give respectively unchanged and opposite orientations.
\end{proof}

\begin{lemma}[one-wall recursion]\label{lem:one-wall}
Let \(\nu_0\) be a point of the wall
\[
                   H_\beta=\{\nu:\beta(\nu)=1\}
\]
which lies on no other reducibility hyperplane.  Choose a vector
\(\eta\) with \(\beta(\eta)=1\), and cross the wall along
\[
                       \nu(t)=\nu_0+t\eta.
\]
Thus \(t=\beta(\nu(t))-1\), with \(t>0\) on the side
\(\beta(\nu)>1\) and \(t<0\) on the side \(\beta(\nu)<1\).

Via the nonsingular intertwining factors, the radical of
\(A_{\operatorname{Sym}^2V}(\nu_0)\) is identified with
\[
             K_\beta=\beta\odot\beta^\perp\cong V_\beta.
\]
The first Jantzen form, namely the coefficient of \(t\) in the
restriction of
\(A_{\operatorname{Sym}^2V}(\nu(t))\) to \(K_\beta\), is congruent to
\[
 -\epsilon(\beta)c_\beta(\nu_0)
 A^{R_\beta}_{V_\beta}(\bar\nu_\beta),                 \tag{4.1}
\]
where \(c_\beta(\nu_0)>0\) and
\[
             \epsilon(\beta)=(-1)^{\operatorname{ht}(\beta)+1}.
\]
If \(k_\beta\) is the negative index of the reflection form on the
right-hand side of \emph{(4.1)}, then crossing from
\(\beta(\nu)<1\) to \(\beta(\nu)>1\) changes the negative index by
\[
 \epsilon(\beta)\bigl(\dim V_\beta-2k_\beta\bigr).       \tag{4.2}
\]

\end{lemma}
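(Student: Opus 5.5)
Choose a reduced expression for \(w_0\) in which the root factor belonging to \(\beta\) is displayed, and write \(A_{\operatorname{Sym}^2V}(\nu(t))=X(t)\,\frac{1+(1+t)\sigma(s)}{2+t}\,Y(t)\), where \(s\) is the simple reflection conjugate to \(s_\beta\) under the prefix \(y\), so \(\beta=y^{-1}\alpha\). Since \(\nu_0\) lies on no other wall, \(X(0)\) and \(Y(0)\) are invertible. The middle factor at \(t=0\) is \(\tfrac12(1+\sigma(s))\). Its kernel is the \((-1)\)-eigenspace of \(s_\alpha\) on \(\operatorname{Sym}^2V\), namely \(\alpha\odot\alpha^\perp\). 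Transported back by \(Y(0)^{-1}\) (equivalently by \(y\)), this kernel becomes \(K_\beta=\beta\odot\beta^\perp\cong V_\beta\); we check that \(\dim=\dim V-1\) matches the order of vanishing of the determinant. This identifies the radical.

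Next we compute the first Jantzen form. On \(K_\beta\), the \(t\)-derivative of the middle factor is \(\tfrac{1}{2}\sigma(s)=-\tfrac12\) up to a positive constant, so the derivative of the middle factor alone is a negative scalar. The Jantzen form on \(K_\beta\) is therefore \(-\tfrac12\) times the form obtained by sandwiching with the nonsingular pieces. Here we use the standard Hermitian-factorization trick. The product splits, up to positive scalars, as (intertwiner for \(W(R_\beta)\)-type roots) \(\times\) (factors for roots \(\gamma\) with \(\langle\gamma,\beta^\vee\rangle=\pm1\)). This splitting uses a reduced expression adapted to the parabolic-like decomposition of \(R^+\) into \(R_\beta^+\), \(\{\beta\}\), and the \(\pm1\)-strings. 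On \(\beta\odot v\) with \(v\in\beta^\perp\), a reflection \(s_\gamma\) with \(\gamma\in R_\beta\) acts as \(\beta\odot s_\gamma v\). The \(R_\beta\)-factors therefore assemble exactly into \(A^{R_\beta}_{V_\beta}(\bar\nu_\beta)\), because \(\gamma(\nu_0)=\gamma(\bar\nu_\beta)\).

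The remaining factors come from roots \(\gamma\) non-orthogonal to \(\beta\). These pair up as \(\{\gamma,\beta-\gamma\}\) or \(\{\gamma,s_\beta\gamma\}\)-type \(A_2\) strings. Restricted to the radical, each pair should contribute a positive scalar times a sign. We control this with Lemma~\ref{lem:rank-two-table}: the \(-1\) rows give no degeneracy and no sign change, so these factors contribute a positive-definite congruence, and the total sign is governed by the comparable (\(+1\)) strings. Rather than tracking each string, we fix the overall sign \(-\epsilon(\beta)\) by the same propagation argument as in Theorem~\ref{thm:reflection-signature}. First we show the sign is constant along each wall. Generic points are connected through codimension-two crossings of type \(0\) and \(-1\): type \(0\) changes \(A^{R_\beta}\) itself, consistently, and type \(-1\) changes nothing. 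Second, for a simple root \(\alpha\) we compute directly near the fundamental-cell face, where \(A^{R_\alpha}\) is positive. Crossing \(\alpha(\nu)=1\) turns all \(\dim V-1\) kernel eigenvalues negative, since the middle factor \((1+(1+t)\sigma(s))\) on the \((-1)\)-space is \(-t\). This gives the sign \(-1=-\epsilon(\alpha)\) times a positive form. Third, the \(+1\) row of the table (common radical, opposite orientation) flips the sign along each cover \(\gamma\lessdot\beta\), so induction on height yields \(-\epsilon(\beta)c_\beta\) with \(c_\beta>0\). This establishes (4.1).

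Finally, (4.2) follows from real-analytic Hermitian perturbation. The \(\dim V_\beta\) eigenvalues vanishing at \(t=0\) behave like \(t\) times the eigenvalues of the Jantzen form, which is nondegenerate since \(\bar\nu_\beta\) is generic for \(R_\beta\). For \(t>0\), the number of these eigenvalues that are negative equals the negative index of the Jantzen form; for \(t<0\), it equals the positive index. Writing \(n_-=k_\beta\) for \(A^{R_\beta}\), the negative index of \(-\epsilon c A^{R_\beta}\) is \(k_\beta\) if \(\epsilon=-1\) and \(\dim V_\beta-k_\beta\) if \(\epsilon=1\). The net change is therefore \(\epsilon(\beta)(\dim V_\beta-2k_\beta)\), and all other eigenvalues keep their sign. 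The main obstacle is the second step, namely making rigorous the claim that the \(\pm1\)-string factors contribute only a positive congruence on \(K_\beta\) and do not twist \(A^{R_\beta}\). We would handle it by the constancy-plus-induction argument above rather than by explicit matrix bookkeeping.
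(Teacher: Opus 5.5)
Your outline is the paper's own: display the $\beta$-factor, reduce the first Jantzen form to the derivative of the singular factor, propagate its signature along $H_\beta$ and along root-poset covers from a base case, then count. Your final count for (4.2) is correct.

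The step that fails is the base case. For $\operatorname{rank}R\ge2$ and $\alpha$ simple, every point of $H_\alpha$ in the open chamber has $\theta(\nu)=1+(\theta-\alpha)(\nu)>1$. So no piece of $H_\alpha$ bounds the fundamental cell, and the conclusion you draw there is actually false. Take $D_4$ with central node $\alpha_c$:
the roots of $R^+_{\alpha_c}$ are $\alpha_a+\alpha_b+\alpha_c$, $\alpha_a+\alpha_c+\alpha_d$ and $\alpha_b+\alpha_c+\alpha_d$, all $>1$ on $H_{\alpha_c}$, so $A^{R_{\alpha_c}}_{V_{\alpha_c}}(\bar\nu_{\alpha_c})$ is negative definite. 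Now let the three outer simple-root values tend to infinity with $s=\alpha_c(\nu)$ fixed. Then $A_{\operatorname{Sym}^2V}$ tends to $\sigma(w_0)\bigl(s+\sigma(s_{\alpha_c})\bigr)/(1+s)=\bigl(s+\sigma(s_{\alpha_c})\bigr)/(1+s)$. So the three radical eigenvalues pass from negative to positive as $s$ crosses $1$. This is exactly what (4.1) predicts, and the opposite of your claim. Only incomparable walls meet $H_{\alpha_c}$ in the open chamber, so this holds along all of it.

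The sign of the middle factor alone cannot decide this, because the nonsingular pieces need not be positive on the radical. In this limit they multiply to $\sigma(w_0)\sigma(s_{\alpha_c})=\sigma(s_{\alpha_c})$, which is $-1$ there. The paper's written proof anchors at simple roots in the same way, so the slip is shared.

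The repair is short: anchor at the highest root $\theta$.
\begin{enumerate}
\item Its wall is the only reducibility wall bounding $U_0$ in the open chamber.
\item The form is positive definite on $U_0$, so the vanishing eigenvalues are positive for $t<0$ and the Jantzen form is negative definite.
\item Every $\gamma\in R^+_\theta$ has $\gamma(\nu)<1$ on $H_\theta$, so $A^{R_\theta}_{V_\theta}(\bar\nu_\theta)$ is positive definite.
\item The sign is $-1=-\epsilon(\theta)$ because $\operatorname{ht}\theta=h-1$ is odd when $w_0=-1$.
\end{enumerate}
This is where the standing hypothesis $w_0=-1$ enters; your argument never uses it. Then run the cover induction downward from $\theta$.

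Three smaller repairs.

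First, your splitting of the product into an $R_\beta$-intertwiner times $\pm1$-string factors is not available. The factors in (1.2) involve simple reflections $\sigma(s_i)$, which do not preserve $\beta\odot\beta^\perp$. The roots of $R^+_\beta$ need not form a consecutive block of a reduced word. And $Y(0)^{-1}$ is not ``equivalently $y$'' on the radical. The splitting is also unnecessary: propagation gives equal signatures for the Jantzen form and $-\epsilon(\beta)A^{R_\beta}_{V_\beta}(\bar\nu_\beta)$ at every generic point, and Sylvester's law converts that into the congruence (4.1), as the paper does.

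Second, comparable walls never meet in the open chamber, since $\gamma>\beta$ forces $\gamma(\nu)>\beta(\nu)$. A cover $\beta'\lessdot\beta$ meets only on the chamber wall of $\alpha=\beta-\beta'$. To transfer (4.1) across it, note that $s_\alpha$ fixes those points and carries $R^+_{\beta'}$ onto $R^+_\beta$ preserving subsystem heights. Hence the two reflection forms have equal signature nearby.

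Third, when you discard the product-rule terms, the one containing $Y'(0)$ does not vanish on radical vectors. It vanishes only after pairing with the radical, because $\operatorname{Im}A(0)=(\ker A(0))^\perp$ for the Hermitian $A(0)$.
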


\begin{proof}
Choose a reduced expression in which the factor associated with
\(\beta\) is displayed, and evaluate the resulting product along the
path \(\nu(t)=\nu_0+t\eta\).  At \(t=0\), the \(\beta\)-factor is
singular, whereas all the other factors are invertible.  It follows
that the radical of the full product is the inverse image, under the
invertible factors, of the \((-1)\)-eigenspace of \(s_\beta\) on
\(\operatorname{Sym}^2V\).  Since
\[
 V=\mathbb R\beta\oplus\beta^\perp,\qquad
 (\operatorname{Sym}^2V)^{-s_\beta}
       =\beta\odot\beta^\perp,
\]
the radical is thereby identified with
\[
                  K_\beta=\beta\odot\beta^\perp.
\]
When the product is differentiated at \(t=0\) and restricted to this
radical, derivatives of the invertible factors make no contribution:
each such term still contains the singular \(\beta\)-factor, which
vanishes on the corresponding radical vector.  Hence the first
Jantzen form is obtained from the derivative of the singular
rank-one factor, conjugated by the remaining invertible factors.

For a simple root \(\beta\), begin with the portion of \(H_\beta\)
adjacent to the fundamental alcove.  There the derivative of the
rank-one factor is negative, while the remaining factors induce the
positive reflection form for \(R_\beta\).  As \(\nu_0\) moves within
the generic part of \(H_\beta\), the first Jantzen form can become
singular only where \(H_\beta\) meets another reducibility wall.
Lemma~\ref{lem:rank-two-table} shows that a nonorthogonal,
incomparable intersection produces no change of signature, whereas an
orthogonal intersection produces exactly the wall crossing of the
reflection form for \(R_\beta\).  This proves \emph{(4.1)} for a simple
root throughout every generic component of its wall.

If \(\beta'\lessdot\beta\) is a root-poset cover, the two affine walls meet
on the chamber wall corresponding to the simple root \(\beta-\beta'\).
The last row of the rank table identifies their common radical and reverses
orientation. Induction along a saturated chain gives
\((-1)^{\operatorname{ht}(\beta)-1}=\epsilon(\beta)\);
gradedness makes this independent of the chain. Sylvester's law turns
equality of signatures into the pointwise congruence \emph{(4.1)}.

Finally, because \(t=\beta(\nu(t))-1\), differentiation of the
singular rank-one factor with respect to \(t\) contributes the
universal minus sign in (4.1).  A nondegenerate form of dimension
\(\dim V_\beta\) and negative index \(k_\beta\) has signature
difference
\[
                         \dim V_\beta-2k_\beta.
\]
Therefore crossing from \(t<0\), that is \(\beta(\nu)<1\), to \(t>0\),
that is \(\beta(\nu)>1\), changes the negative index by (4.2).
\end{proof}

\begin{corollary}[recursive signature formula]\label{cor:sym2-signature}
Perturb \(\nu\) within its cell so that the values \(\beta(\nu)\) are
pairwise distinct. Let \(h_\beta\) be height in
\(R_\beta^+=R^+\cap\beta^\perp\), and put
\[
 k_\beta(\nu)=
 \sum_{\substack{\gamma\in R_\beta^+\\
                  \gamma(\nu)>\beta(\nu)}}
 (-1)^{h_\beta(\gamma)+1}.
\]
Then
\[
 n_-\bigl(A_{\operatorname{Sym}^2V}(\nu)\bigr)
 =\sum_{\substack{\beta\in R^+\\\beta(\nu)>1}}
 (-1)^{\operatorname{ht}(\beta)+1}
 \left(\operatorname{rank}R-1-2k_\beta(\nu)\right).     \tag{4.3}
\]
\end{corollary}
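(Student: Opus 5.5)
We propose to integrate the one-wall jumps of Lemma~\ref{lem:one-wall} along the ray \(t\mapsto t\nu\), \(0<t\le 1\), starting from the origin, where \(A_{\operatorname{Sym}^2V}\) is the identity and hence \(n_-=0\).

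First we arrange genericity. Perturbing \(\nu\) inside its cell does not change the negative index, because the form stays nondegenerate on the open cell. After perturbation the values \(\beta(\nu)\) are pairwise distinct and nonzero. Then the ray \(t\nu\) meets the hyperplanes \(H_\beta\), \(\beta\in R^+\), exactly at the times \(t_\beta=1/\beta(\nu)\). Among \(t\in(0,1]\) these are the times with \(\beta(\nu)>1\), that is \(\beta\in I(\nu)\), and they are pairwise distinct. A small further generic perturbation ensures that each crossing point \(\nu_0=t_\beta\nu\) lies on no other reducibility wall. So each crossing is a single-wall crossing, made from the side \(\beta<1\) to the side \(\beta>1\). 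Since \(\beta(\nu)>0\), the direction \(\eta=\nu/\beta(\nu)\) satisfies \(\beta(\eta)=1\), and the path \(t\nu\) reparametrizes the path of Lemma~\ref{lem:one-wall}. By (4.2), the crossing of \(H_\beta\) changes \(n_-\) by \(\epsilon(\beta)(\dim V_\beta-2k_\beta)\), and \(\dim V_\beta=\operatorname{rank}R-1\). Summing these jumps over \(\beta\in I(\nu)\) gives (4.3), once \(k_\beta\) is identified.

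The main step is computing \(k_\beta\), the negative index of \(A^{R_\beta}_{V_\beta}(\bar\nu_\beta)\) at the crossing point \(\nu_0=\nu/\beta(\nu)\). By hypothesis the reflection-signature Theorem~\ref{thm:reflection-signature} applies to \(R_\beta\). The trivial complementary factor of \(V_\beta\) contributes only positive directions, since \(W(R_\beta)\) acts trivially on it. The reflection-signature formula is, however, stated for dominant regular parameters. So we first conjugate \(\bar\nu_\beta\) by \(W(R_\beta)\) into the dominant chamber for \(R_\beta^+=R^+\cap\beta^\perp\). This leaves the form unchanged up to congruence through the normalized intertwiners, which are invertible at generic points. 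If \(\bar\nu_\beta\) is already \(R_\beta^+\)-dominant, as happens when \(\nu\) is dominant, no conjugation is needed.

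With this in hand, Theorem~\ref{thm:reflection-signature} counts the roots \(\gamma\in R_\beta^+\) with \(\gamma(\nu_0)>1\), weighted by \((-1)^{h_\beta(\gamma)+1}\). The condition \(\gamma(\nu_0)>1\) is the same as \(\gamma(\nu)>\beta(\nu)\), which is exactly the index set defining \(k_\beta(\nu)\). Genericity makes \(\bar\nu_\beta\) lie off the walls of \(R_\beta\), because the values \(\gamma(\nu)\) and \(\beta(\nu)\) are distinct.

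The hardest point I expect is the regularity and dominance check for \(\bar\nu_\beta\) relative to \(R_\beta^+\). The positive system \(R^+\cap\beta^\perp\) need not be standard with respect to the simple roots of \(R\). One must therefore confirm that "height in \(R_\beta^+\)" and dominance of the restricted parameter are computed in this induced positive system. My first attempt would use that \(\nu\) is dominant, so \(\gamma(\nu)>0\) for every \(\gamma\in R_\beta^+\). This makes \(\bar\nu_\beta\) dominant and regular for \(R_\beta^+\), and no conjugation is needed.
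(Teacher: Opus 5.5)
Your proposal is correct and is essentially the paper's proof: cross the walls along \(t\nu\) starting from the positive form at the origin, observe that at the \(\beta\)-wall the condition \(\gamma(\nu/\beta(\nu))>1\) is exactly \(\gamma(\nu)>\beta(\nu)\), identify \(k_\beta\) via the reflection-signature formula in \(R_\beta\), and sum the jumps (4.2). Your additional checks fill in details the paper leaves implicit: the reparametrization of the one-wall path, dominance and regularity of \(\bar\nu_\beta\) for \(R^+\cap\beta^\perp\) (which follow from dominance of \(\nu\)), and the fact that the trivial complement contributes no negative directions. The extra perturbation and the \(W(R_\beta)\)-conjugation you mention are unnecessary, since pairwise distinctness of the values already keeps each crossing point off the other walls.
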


\begin{proof}
Cross the walls successively along \(t\nu\). At the \(\beta\)-wall,
precisely the orthogonal roots \(\gamma\) with
\(\gamma(\nu)>\beta(\nu)\) have already been crossed. The reflection
signature formula in \(R_\beta\) says that their alternating sum is
\(k_\beta(\nu)\). Sum (4.2), starting with the positive form at \(t=0\).
\end{proof}

\begin{remark}
The unmodified statement is false across a multiple bond. For example, in
type \(B_3\), at the wall of a middle short coordinate root, the
symmetric-square jump is \(2\), whereas the equal-parameter reflection form
of the orthogonal \(B_2\) subsystem has signature difference \(0\).
The \(B,C,BC\) families therefore require a separate signed-permutation
argument.
\end{remark}

\section{Locked balanced ideals in \(D_{2m}\)}
\label{sec:d-even-locked}

Put \(n=2m\), and write
\[
 x_{pq}=e_p-e_q,\qquad y_{pq}=e_p+e_q\qquad(1\leq p<q\leq n).
\]
Color a root by
\(\epsilon(\beta)=(-1)^{\operatorname{ht}(\beta)+1}\), and call an
upper ideal {\it locked} if no two successive deletions of minimal roots form
an \(A_2\)-pair.  Let
\[
 T_{2j}=\{\beta\in R^+:\operatorname{ht}(\beta)\geq2j\}.
\]
For \(1\leq j<m\), define the fork staircase
\[
 S_{n,j}=\{y_{pq}:n-2j+1\leq p<q\leq2n-2j-p\},             \tag{5.1}
\]
and put
\[
                         K_{n,j}=T_{2j}\setminus S_{n,j}.    \tag{5.2}
\]
For \(j=1\), the staircase is empty.  Its successive row lengths for
general \(j\) are \(2j-2,2j-4,\ldots,2\).

We shall use the following elementary criterion.  If \(A\) is the set of
minimal roots of an upper ideal \(I\), then \(I\) is locked if and only if
every root which covers an element of \(A\) lies above at least two
elements of \(A\).  Indeed, after deleting \(\alpha\in A\), a new minimal
root can only be a cover of \(\alpha\); and it is new minimal precisely
when \(\alpha\) was the unique member of \(A\) below it.  In simply-laced
type a cover and the root it covers have inner product one.

\begin{lemma}[canonical fork peeling]\label{lem:d-canonical-peel}
The height tail \(T_{2j}\) is carried to \(K_{n,j}\) by Yu moves.
The ideal \(K_{n,j}\) is locked and balanced.
\end{lemma}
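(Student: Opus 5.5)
The plan is to prove the three assertions separately: balance, the Yu-move reduction, and lockedness. Throughout, the coordinate description of the $D_n$ root poset is used. With simple roots $x_{i,i+1}$ ($1\le i<n$) and $y_{n-1,n}$, one has $\operatorname{ht}(x_{pq})=q-p$ and $\operatorname{ht}(y_{pq})=2n-p-q$. In particular, $S_{n,j}$ is exactly the set of $y_{pq}\in T_{2j}$ with $p\ge n-2j+1$. Row $p$ of the staircase runs over $q=p+1,\dots,2n-2j-p$. Consecutive entries of a row, $y_{pq}$ and $y_{p,q+1}$, differ by the simple root $x_{q,q+1}$ and have inner product one.

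\emph{Balance.} Let $N_h$ be the number of positive roots of height $h$. By Kostant's theorem, $N_h-N_{h+1}$ is the number of exponents equal to $h$. Grouping the heights of $T_{2j}$ in pairs $(2k,2k+1)$ gives
\[
 \sum_{\beta\in T_{2j}}\epsilon(\beta)=-\sum_{k\ge j}(N_{2k}-N_{2k+1})
 =-\#\{e\in\operatorname{Exp}(D_{2m}):e\ \text{even},\ e\ge2j\}.
\]
The exponents of $D_{2m}$ are $1,3,\dots,4m-3$ together with $2m-1$, all odd. Hence $T_{2j}$ is balanced. Each staircase row has even length and consists of consecutive heights, so its colours cancel. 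Therefore $K_{n,j}$ is balanced as well.

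\emph{Yu moves.} I would delete the staircase two roots at a time, starting at the bottom. The lowest-height entries of the staircase are its row ends $y_{p,2n-2j-p}$, which lie at height exactly $2j$ and are therefore minimal in $T_{2j}$. The step is to pair such a row end with its row neighbour $y_{p,2n-2j-p-1}$ and check two things.

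First, the row end is exposed in the current ideal.

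Second, after it is deleted, the neighbour becomes minimal. This holds because the neighbour's only other lower cover is $y_{p+1,\dots}$ in the row below, which is either outside $T_{2j}$ or has already been removed.

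With the pairs taken in this order (the shortest row first, then inwards along each row), the inner-product-one condition of a Yu move holds automatically. Exposure, in the sense of the wall being an actual wall of the cell, follows from minimality together with the fact that the remaining ideal is still realizable. At each stage this can be checked by exhibiting a dominant $\nu$ adapted to the ideal. I expect that a convenient choice is a small perturbation of a point with $\beta(\nu)$ depending only on height, adjusted on the fork coordinates.

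\emph{Lockedness.} Here I would use the elementary criterion stated just before the lemma. The minimal elements $A$ of $K_{n,j}$ fall into three groups:
\begin{enumerate}
\item the $x$-roots of height $2j$;
\item the $y_{pq}$ of height $2j$ with $p\le n-2j$;
\item the roots just above the staircase, namely $y_{pq}$ with $p\ge n-2j+1$ and $\operatorname{ht}=2n-2j-p+\dots$, i.e.\ the first $y$-root beyond each row, together with the roots covering the top of the staircase.
\end{enumerate}
For each cover $\gamma$ of an element of $A$, I would list its lower covers (at most two in simply-laced type, by the shape of the $D_n$ poset). I would then check that both of them lie in $A$, or at least above two members of $A$.

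I expect the main obstacle to be the verification near the fork, where the $x$-tail meets the $y$-region ($q=n$ versus $q=n-1$, and $x_{p,n}$ versus $y_{p,n}$). That is where the staircase boundary and the height-$2j$ antichain interact, and where a cover may a priori sit over a single minimal root. There I would do the check by a direct case split on whether $p\le n-2j$ and on the parity of the row. The even row lengths are exactly what makes each new potential minimal root sit over two elements of $A$.
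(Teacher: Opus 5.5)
Your balance argument and your staircase peeling are correct and essentially the paper's. Kostant's height--exponent duality is the same input as the paper's generating-function identity. Deleting the rows from the shortest one inward also works: the paper goes level by level via (5.3)--(5.4), but with either order each first root is minimal, its row neighbour becomes minimal after the deletion, and the two have inner product one. Your worry about exposure versus minimality is not an issue relative to the paper, which in Section 5 works with deletions of minimal roots throughout.

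The gap is in the lockedness step. Your description of $\min K_{n,j}$ is wrong, so the check you defer cannot be carried out as planned.

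\begin{itemize}
\item By your own observation, $S_{n,j}$ contains every $y_{pq}\in T_{2j}$ with $p\ge n-2j+1$. Hence $K_{n,j}$ contains no such root, and your group (3) is empty.
\item The roots above the staircase that do survive are the $y_{pq}$ with $p\le n-2j$ and $q\le n-1$. Each lies above $y_{n-2j,n}$, so none is minimal.
\item Your group (2) is a single root: $p+q=2n-2j$ together with $p\le n-2j$ forces $(p,q)=(n-2j,n)$.
\end{itemize}

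The missing observation is
\[
 K_{n,j}=\{x_{pq}:q-p\ge 2j\}\cup\{y_{pq}:p\le n-2j\},
\]
so that
\[
 \min K_{n,j}=\{x_{p,p+2j}:1\le p\le n-2j\}\cup\{y_{n-2j,n}\},
\]
which is the paper's (5.6).

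With this antichain the cover criterion is a short check, not a case split by row parity. The covers of its members are:
\begin{itemize}
\item $x_{p,p+2j+1}$ for $1\le p\le n-2j-1$, which lies above $x_{p,p+2j}$ and $x_{p+1,p+2j+1}$;
\item the fork root $y_{n-2j,n-1}$, which lies above $x_{n-2j,n}$ and $y_{n-2j,n}$;
\item the fork root $y_{n-2j-1,n}$, which lies above $x_{n-2j-1,n-1}$ and $y_{n-2j,n}$.
\end{itemize}
Here $n-2j\ge 2$ because $j<m$. Every cover therefore lies above two members of the antichain, which proves lockedness.

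Your heuristic that the even row lengths make lockedness work is misplaced. Row parity is what splits $S_{n,j}$ into Yu pairs and makes its colours cancel. But the staircase is entirely absent from $K_{n,j}$, whose lower boundary is just the straight height-$2j$ $x$-string closed off at the fork by $y_{n-2j,n}$.

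Also, your claim that a root has at most two lower covers in simply-laced type is false. In $D_n$, for $p\le n-3$, the root $y_{p,n-1}$ covers $y_{p,n}$, $x_{p,n}$ and $y_{p+1,n-1}$. Your fallback test, lying above at least two members of the antichain, is the correct criterion.
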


\begin{proof}
For \(0\leq a\leq j-2\), set
\[
 C_a=2n-2j-2a,
 \qquad n-2j+1\leq p\leq n-j-a-1.                          \tag{5.3}
\]
Delete, in increasing order of \(a\), the pairs
\[
                 y_{p,C_a-p},\qquad y_{p,C_a-p-1},          \tag{5.4}
\]
and, for fixed \(a\), in decreasing order of \(p\).  The two roots in
(5.4) have inner product one.  The cover relations
\[
 y_{pq}\lessdot y_{p-1,q},\quad
 y_{pq}\lessdot y_{p,q-1},                                 \tag{5.5}
\]
away from the fork columns, together with the two usual fork covers,
show successively that the first root in (5.4) is minimal and that, after
its deletion, the second is minimal.  The pairs in (5.4) are disjoint,
and their union is exactly (5.1).  This proves the first assertion.

The minimal antichain of the resulting ideal is
\[
 A_{n,j}=
 \{x_{p,p+2j}:1\leq p\leq n-2j\}
 \mathbin{\cup}\{y_{n-2j,n}\}.                              \tag{5.6}
\]
This follows either from (5.1), or directly from the cover relations.
Each upper cover of an interior member \(x_{p,p+2j}\) lies above the
neighbouring member of the displayed \(x\)-string.  At its right end the
two fork covers lie above both \(x_{n-2j,n}\) and \(y_{n-2j,n}\); the left
end is checked in the same way using \(x_{1,1+2j}\) and its neighbour.
Thus every cover of a member of \(A_{n,j}\) lies above a second member of
\(A_{n,j}\).  The criterion above proves that \(K_{n,j}\) is locked.
Finally each deleted pair has opposite height parity.  To evaluate the
height tail, use the standard height--exponent identity
\[
 \sum_{\beta\in R^+}q^{\operatorname{ht}(\beta)}
   =\sum_{i=1}^{\operatorname{rank}R}(q+q^2+\cdots+q^{m_i}),
\]
where \(m_i\) are the exponents of \(R\).  Every exponent of
\(D_{2m}\) is odd, so each nonempty alternating tail beginning at the
even height \(2j\) has sum zero.  Consequently
\[
 \sum_{\beta\in T_{2j}}(-1)^{\operatorname{ht}(\beta)+1}=0;
\]
hence \(K_{n,j}\) is balanced.
\end{proof}

\begin{lemma}[locked-boundary classification]\label{lem:d-boundary}
If \(I\) is a nonempty balanced locked upper ideal in \(D_{2m}^+\), then
\[
                              I=K_{n,j}
 \quad\text{for a unique }1\leq j<m.                       \tag{5.7}
\]
\end{lemma}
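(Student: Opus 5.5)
Our plan is to work directly with the minimal antichain \(A\) of \(I\) and the locking criterion stated before Lemma~\ref{lem:d-canonical-peel}: every root covering a member of \(A\) must lie above at least two members of \(A\). Uniqueness of \(j\) is immediate, since the minimal antichains \(A_{n,j}\) of (5.6) are pairwise distinct. The real work is the existence of \(j\) with \(A=A_{n,j}\).

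\textbf{Step 1 (the \(x\)-part of \(A\)).} The roots \(x_{pq}\) form the \(A_{n-1}\) subposet, with covers \(x_{pq}\lessdot x_{p-1,q}\) and \(x_{pq}\lessdot x_{p,q+1}\). Let \(x_{p,q}\in A\) be an \(x\)-root with \(p\) maximal. Its cover \(x_{p-1,q}\) (if \(p>1\)) must lie above a second member of \(A\); among \(x\)-roots, the only candidates are of the form \(x_{p-1,q'}\) with \(q'<q\). Propagating this constraint leftward, and treating the cover \(x_{p,q+1}\) symmetrically, should force the \(x\)-part of \(A\) to be a full ``diagonal string'' \(\{x_{p,p+d}:1\le p\le n-d\}\) of constant length \(d\). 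Two boundary cases need separate handling: the string is empty, or \(I\) contains no \(x\)-roots at all.

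\textbf{Step 2 (the fork and the \(y\)-part).} Near \(q=n\), the covers of \(x_{p,n}\) pass through the fork into the \(y\)-roots \(y_{p,n}\) and \(y_{p,n-1}\). Locking at the right end of the string then forces a \(y\)-member of \(A\), and a short case check identifies it as \(y_{n-d,n}\). Any further \(y\)-minimal elements would lie below the rows of a staircase \(S_{n,j}\). We would show that each such element has a cover lying over only one member of \(A\), which violates locking. With \(d\) fixed, this leaves \(A=A_{n,d/2}\) whenever \(d\) is even, so that \(I=K_{n,d/2}\).

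\textbf{Step 3 (balance excludes the rest).} Two families remain and must be eliminated with balance: odd \(d\), and ideals whose \(x\)-part is empty. For the latter, the only antichains allowed by Steps 1--2 are pure \(y\)-staircase configurations. For these we would compute \(\sum_{\beta\in I}\epsilon(\beta)\) directly by comparing \(I\) with the tail \(T_{d}\). The tail \(T_d\) has alternating sum \(0\) when \(d\) is even (the exponent argument of Lemma~\ref{lem:d-canonical-peel}). When \(d\) is odd, the corresponding sum is \(\pm\) the number of exponents \(\ge d\), which is nonzero. The staircase corrections contribute pairs of opposite parity, so they do not change this sum. Hence odd \(d\) gives an unbalanced ideal, and the \(y\)-only ideals come out unbalanced or locked-violating.

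The main obstacle is Step 2, the analysis at the fork. In \(D_{2m}\) the cover relations between \(x_{p,n}\), \(y_{p,n}\) and \(y_{p,n-1}\) break the grid symmetry, so the ``two members below each cover'' condition needs a careful local case analysis. That analysis is what pins the single \(y\)-generator to \(y_{n-d,n}\). We would carry it out first for small \(n\) (\(D_4\), \(D_6\)) to fix the pattern, and then argue by induction on \(m\). The induction would restrict to the Levi \(D_{2m-2}\) on the coordinates \(3,\dots,n\) and check that the restriction of a locked balanced ideal remains locked and balanced.
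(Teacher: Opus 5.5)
\paragraph{Step 1} This step is sound. The type-\(A\) part of the cover criterion does force the \(x\)-part of \(A\) to be a full diagonal \(\{x_{p,p+d}:1\le p\le n-d\}\).

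\paragraph{Step 2 has a genuine gap} Lockedness does \emph{not} exclude further \(y\)-minimal elements inside the staircase. So your conclusion ``with \(d\) fixed, this leaves \(A=A_{n,d/2}\) whenever \(d\) is even'' is false. In \(D_6\), take
\[
 A=\{x_{1,5},\,x_{2,6},\,y_{2,6},\,y_{3,4}\},\qquad I=K_{6,2}\cup\{y_{3,4}\}.
\]
The only cover of \(y_{3,4}\) is \(y_{2,4}\), and \(y_{2,4}\) also lies above \(x_{2,6}\). The other three members are checked exactly as for \(A_{6,2}\). Thus \(I\) is locked and has an \(x\)-diagonal of even length \(4\), yet \(I\ne K_{6,2}\). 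Its alternating sum is \(+1\).

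This is a general phenomenon. Put \(s=n-d\ge2\) and take any anti-diagonal \(y_{s+1,q},y_{s+2,q-1},\dots,y_{p,p+1}\) with \(q\equiv s\pmod 2\) and \(q\le n-1\). Appending it to the diagonal together with \(y_{s,n}\) keeps the antichain locked. The left cover of \(y_{s+1,q}\) lies above \(x_{s,n}\). Each later left cover lies above the preceding member, each right cover lies above the next member, and \(y_{p,p+1}\) has no right cover.

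These ideals are excluded only by balance. Each row of the added region has odd length and odd-height ends, so it contributes \(+1\). This is exactly the ``premature turn'' line of (5.10) in the paper, and its unpaired odd endpoint is what gives the inequality (5.11). Your plan invokes balance only for odd \(d\) and for empty \(x\)-part. In the even case it has no mechanism to exclude these ideals, and the fork analysis you describe would be trying to prove a false statement.

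\paragraph{Step 3 has a related problem} Relative to \(T_d\), the corrections are not opposite-parity pairs. For odd \(d\) with \(s\ge3\), the ideal generated by the diagonal and \(y_{s,n}\) is \(T_d\) minus the region above \(y_{s+1,n-1},y_{s+2,n-2},\dots\). That region consists of odd rows, so the ideal's alternating sum is \(s+1\). This differs from the sum over \(T_d\) as soon as \(d\ge3\). Also, \(y_{1,n}\) is not forced when \(s=1\): the antichain \(\{x_{1,n},y_{2,n-1},\dots,y_{m,m+1}\}\) is locked.

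\paragraph{What a repair needs} First, classify all locked antichains. They are the diagonal with \(y_{s,n}\) plus an optional anti-diagonal tail, the \(s=1\) variant, and the pure anti-diagonals \(y_{1,2k+2},\dots,y_{k+1,k+2}\). Second, verify that every one of them has strictly positive alternating sum, except the even diagonal with \(y_{s,n}\) and no tail. That exception is \(K_{n,d/2}\), and \(d/2<m\) is automatic because \(d\le n-1\). The Levi induction you sketch is not needed, and as stated it is not justified.
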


\begin{proof}
We give the boundary argument in a form which keeps track of the fork.
The roots \(x_{pq}\) and \(y_{pq}\), with the covers (5.5) and their
\(x\)-analogues, form two triangular arrays joined along the two fork
columns.  Adjacent cells have opposite colors.
More explicitly, the seam consists of the diamonds
\[
\begin{matrix}
 &y_{p,n-1}&\\[-1mm]
 x_{p,n}&&y_{p,n}\\[-1mm]
 &x_{p,n-1}&
\end{matrix}                                                  \tag{5.8}
\]
in which each lower entry is covered by the adjacent upper entries.
The intersection of an upper ideal with one such diamond is one of
\[
 \varnothing,\quad \{t\},\quad\{t,l\},\quad\{t,r\},\quad
 \{t,l,r\},\quad\{t,l,r,b\},                               \tag{5.9}
\]
where \(t,l,r,b\) denote its top, left, right and bottom cells.

Let \(A=\min(I)\).  In each triangular array, \(I\) is the region lying
weakly above \(A\); we call the lattice path separating this region from
its complement the \emph{lower boundary} of \(I\).  Thus the roots of
\(A\) are exactly the roots of \(I\) met along this boundary.

First consider a portion of the boundary away from the fork seam.  If
the boundary makes an inward turn at \(\beta\in A\), then one of the
upper covers \(\gamma=\beta+\alpha\), with \(\alpha\) simple, lies above
no other member of \(A\).  After \(\beta\) is deleted, \(\gamma\)
therefore becomes minimal.  Since \(\gamma-\beta\) is simple, the two
roots have inner product one and their successive deletion is a Yu
move.  Lockedness excludes such an inward corner.  Consequently every
ordinary portion of the boundary follows a diagonal of constant root
height.

At the fork, sweep the boundary from the two outer arms inward.  Pair
successive cells in each maximal alternating strip.  Directly from the
four fork covers one obtains the following alternatives:
\[
\begin{array}{c|c|c}
\text{local passage}&\text{paired remainder}&\text{next boundary}\\ \hline
\text{straight}&\varnothing&\text{same height diagonal}\\
\text{one inward step}&\varnothing&\text{remove one pair (5.4)}\\
\text{premature turn}&\text{one odd cell}&\text{nested boundary}\\
\text{outward corner}&\text{Yu pair}&\text{forbidden}.
\end{array}                                                  \tag{5.10}
\]
Figure~\ref{fig:d-fork-passages} is a schematic picture of these four
possibilities.  The shaded band is the fork seam and the heavy line is
the lower boundary.  The horizontal guide lines represent consecutive
height diagonals.  The picture records only the behavior of the
boundary; the cover relations themselves are those in~(5.8).

\begin{figure}[ht]
\centering
\begin{tikzpicture}[x=0.72cm,y=0.72cm,
                    boundary/.style={very thick},
                    guide/.style={thin,densely dotted}]
  \foreach \s/\name in {0/{straight},4.1/{one inward step},
                         8.2/{premature turn},12.3/{outward corner}} {
    \begin{scope}[xshift=\s cm]
      \fill[black!8] (1.25,-.85) rectangle (1.75,.85);
      \draw[guide] (0,.55)--(3,.55);
      \draw[guide] (0,0)--(3,0);
      \draw[guide] (0,-.55)--(3,-.55);
      \node[font=\scriptsize] at (1.5,1.13) {\name};
    \end{scope}
  }
  \begin{scope}
    \draw[boundary] (0,0)--(3,0);
  \end{scope}
  \begin{scope}[xshift=4.1cm]
    \draw[boundary] (0,.55)--(1.25,.55)--(1.75,-.55)--(3,-.55);
  \end{scope}
  \begin{scope}[xshift=8.2cm]
    \draw[boundary] (0,.55)--(1.5,.55)--(1.5,0)--(.35,0);
    \fill (1.5,0) circle (1.8pt);
    \node[font=\tiny,below right] at (1.5,0) {odd endpoint};
  \end{scope}
  \begin{scope}[xshift=12.3cm]
    \draw[boundary] (0,0)--(1.25,0)--(1.75,.55)--(3,.55);
    \fill (1.25,0) circle (1.8pt);
    \fill (1.75,.55) circle (1.8pt);
    \node[font=\tiny,below] at (1.5,-.08) {Yu pair};
  \end{scope}
\end{tikzpicture}
\caption{The four possible local passages of the lower boundary through
the fork seam.  The diagram is schematic: the exact fork diamonds and
their cover relations are displayed in~\emph{(5.8)}.}
\label{fig:d-fork-passages}
\end{figure}

Indeed, substituting the six possibilities (5.9), the two asymmetric
possibilities are exchanged by the fork automorphism and give the second
line; \(\{t,l,r\}\) gives the straight line; and a change between two
successive diamonds gives respectively the third or fourth line.  Thus
the table exhausts the local possibilities; it is not an appeal to a
picture beyond the four stated cover relations.
Here pairing means pairing consecutive roots along each maximal portion
of the boundary between two visits to the fork seam.  Consecutive roots
have opposite height parity, so every such pair has total color zero.
In the third line, the pairing leaves one endpoint.  That endpoint has
odd height: the distance between the two relevant visits to the fork
seam is \(n-4\), which is even because \(n=2m\).  The remaining boundary
encloses a strictly smaller region, so the same argument may be applied
to it.  The unpaired odd root lies outside every later nested region and
can never enter a subsequent pair.  Thus every premature turn makes a
strictly positive contribution, while straight passages and inward
steps contribute zero.  Consequently
Consequently
\[
             \sum_{\beta\in I}(-1)^{\operatorname{ht}(\beta)+1}\geq0,
                                                                    \tag{5.11}
\]
and equality permits only the first two lines of (5.10).

Thus a balanced locked boundary starts on one even height diagonal,
say height \(2j\), and at its successive encounters with the fork takes
all possible inward steps.  The pairs removed at the \(a\)-th encounter
are precisely (5.4).  The resulting ideal is therefore \(K_{n,j}\).
If \(j\geq m\), put \(p=n-j-1\).  Then \(y_{p,p+2}\) is minimal and,
after its deletion, \(y_{p,p+1}\) is minimal; their inner product is one.
Hence lockedness forces \(j<m\).  Uniqueness follows from the least
height occurring in the ideal.
\end{proof}

\begin{proposition}\label{prop:d-locked-classification}
The nonempty balanced locked ideals of \(D_{2m}\) are precisely
\[
                         K_{n,1},K_{n,2},\ldots,K_{n,m-1}.
\]
Each is connected by Yu moves to the corresponding height tail
\(T_2,T_4,\ldots,T_{2m-2}\).
\end{proposition}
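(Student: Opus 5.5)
This is essentially the conjunction of Lemma~\ref{lem:d-canonical-peel} and Lemma~\ref{lem:d-boundary}, so the plan is to assemble those two lemmas and check that the indexing matches.

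\emph{Sufficiency.} For each \(1\leq j<m\), Lemma~\ref{lem:d-canonical-peel} already states that \(K_{n,j}\) is locked and balanced. Nonemptiness is a short count, done in three steps.
\begin{enumerate}
\item The highest root of \(D_{2m}\) has height \(2(2m)-3=4m-3\). Since \(2j\leq 2m-2<4m-3\), the tail \(T_{2j}\) contains roots of every height from \(2j\) up to \(4m-3\).
\item The fork staircase \(S_{n,j}\) consists only of roots \(y_{pq}\) with \(p\geq n-2j+1\). The roots \(x_{p,p+2j}\) have height exactly \(2j\) and lie in \(T_{2j}\) but not in \(S_{n,j}\), and they exist because \(n-2j\geq2\).
\item Alternatively, the antichain \(A_{n,j}\) in (5.6) is visibly nonempty.
\end{enumerate}
So each \(K_{n,j}\) is a nonempty balanced locked ideal.

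\emph{Necessity.} Lemma~\ref{lem:d-boundary} says that every nonempty balanced locked upper ideal equals some \(K_{n,j}\) with \(1\leq j<m\). That gives the inclusion in the other direction.

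\emph{Distinctness.} The list has no repetitions because the minimal height occurring in \(K_{n,j}\) is \(2j\), witnessed by \(x_{1,1+2j}\in A_{n,j}\). This is the same uniqueness argument used at the end of the boundary lemma.

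\emph{Connection to the height tails.} The first assertion of Lemma~\ref{lem:d-canonical-peel} says that \(T_{2j}\) is carried to \(K_{n,j}\) by the explicit Yu moves (5.4). Reversing them gives inverse Yu moves from \(K_{n,j}\) back to \(T_{2j}\). This pairs \(K_{n,1},\dots,K_{n,m-1}\) with \(T_2,\dots,T_{2m-2}\).

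\emph{Remaining check.} The statement's "locked" must match Definition~\ref{d:locked}, i.e.\ no Yu move in the sense of Section~\ref{sec:yu-summary} applies. In simply-laced type, two roots with inner product one satisfy (1.5), and exposed roots are minimal. So the combinatorial lockedness of Section~\ref{sec:d-even-locked} ("no two successive deletions of minimal roots form an \(A_2\)-pair") implies lockedness in the sense of Definition~\ref{d:locked}. For the converse direction needed by the uniform reduction, one must also know that every such combinatorial pair is realized geometrically by exposed walls. I would take this from the realizability of antichains stated after (1.3).

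I expect this compatibility to be the only step needing any care. All of the real work is already contained in the two lemmas.
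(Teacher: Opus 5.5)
Your proposal is correct and is exactly the paper's proof: the paper simply combines Lemma~\ref{lem:d-canonical-peel} (each $K_{n,j}$ is balanced, locked, and joined to $T_{2j}$ by Yu moves) with Lemma~\ref{lem:d-boundary} (every nonempty balanced locked ideal is some $K_{n,j}$), and your nonemptiness and distinctness checks via the height-$2j$ roots $x_{p,p+2j}$ are harmless additions. On your compatibility remark, the sentence after (1.3) says only that realizable antichains give cells; the fact you actually need is Shi's theorem (Cellini--Papi) that every upper ideal is the ideal of some dominant region, together with the observation that two regions separated only by $H_\beta$ share a facet, so every minimal root is exposed.
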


\begin{proof}
Combine Lemmas~\ref{lem:d-canonical-peel} and \ref{lem:d-boundary}.
\end{proof}

\section{Canonical height-tail representatives in type \(D_{2m}\)}
\label{sec:d-even-tails}

Use the standard positive system
\[
 R^+=\{e_p-e_q,e_p+e_q:1\leq p<q\leq 2m\}
\]
and put
\[
 I_{2j}=\{\beta\in R^+:\operatorname{ht}(\beta)\geq2j\},
 \qquad 1\leq j<m.
\]

\begin{lemma}[the \(D\)-tail count]\label{lem:d-tail-count}
At a generic point of the cell belonging to \(I_{2j}\), the negative
index of the long form on \(\operatorname{Sym}^2V\) is
\[
 n_2(m,j)=
 \begin{cases}
 j(4m-5j+3),&
                    2j\leq m,\\[2mm]
 j(4m-5j-1)+2m,&
                    m<2j\ \text{and}\ 4j\leq3m-2,\\[2mm]
 4(m-j)(m-j+1),&
                    4j>3m-2.
 \end{cases}                                                \tag{6.1}
\]
In particular \(n_2(m,j)>0\).
\end{lemma}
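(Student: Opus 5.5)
The plan is to evaluate the recursive signature formula of Corollary~\ref{cor:sym2-signature} at a convenient point of the cell of \(I_{2j}\). Every generic point of the cell gives the same negative index, so we may choose one where all the values \(\beta(\nu)\) are pairwise distinct. A natural choice is \(\nu=t\rho^\vee+\delta\), where \(t\) is slightly less than \(1/(2j-1)\) and slightly more than \(1/(2j)\) after rescaling, so that \(\beta(\nu)>1\) exactly when \(\operatorname{ht}(\beta)\ge2j\). The small generic perturbation \(\delta\) is there only to break ties among roots of equal height. Then (4.3) reads
\[
 n_2(m,j)=\sum_{\operatorname{ht}(\beta)\ge 2j}(-1)^{\operatorname{ht}(\beta)+1}\bigl(2m-1-2k_\beta(\nu)\bigr),
\]
and the first part of the sum vanishes: the alternating tail from even height is zero by the exponent argument in Lemma~\ref{lem:d-canonical-peel}. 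Hence
\[
 n_2(m,j)=-2\sum_{\operatorname{ht}(\beta)\ge2j}(-1)^{\operatorname{ht}(\beta)+1}k_\beta(\nu),
\]
and the whole computation reduces to determining \(k_\beta\) for each root of height at least \(2j\).

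For \(\beta=x_{pq}\) or \(y_{pq}\), the orthogonal subsystem \(R_\beta\) is \(A_1\times D_{2m-2}\). The \(A_1\) factor is spanned by the companion root (\(y_{pq}\) or \(x_{pq}\) respectively), and the \(D_{2m-2}\) factor lives on the coordinates other than \(p,q\). By the reflection signature formula applied in \(R_\beta\), \(k_\beta(\nu)\) is the alternating sum, by \(R_\beta\)-height, of those \(\gamma\in R_\beta^+\) with \(\gamma(\nu)>\beta(\nu)\). Since \(\nu\) is essentially a multiple of \(\rho^\vee\), this condition means \(\operatorname{ht}(\gamma)>\operatorname{ht}(\beta)\), with ties settled by \(\delta\). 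I will choose \(\delta\) to favour, say, larger first index, and then check that the final answer does not depend on this choice.

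The set \(\{\gamma\in D_{2m-2}^+:\operatorname{ht}_R(\gamma)>h\}\) is a height tail measured in \(R\)-heights. I need to translate it into \(R_\beta\)-heights: deleting coordinates \(p,q\) lowers the height of a root \(e_a\pm e_b\) by the number of deleted indices lying between \(a\) and \(b\), suitably interpreted for the \(y\)-roots. The resulting region in the \(D_{2m-2}\) triangle-plus-fork picture is again bounded by staircase boundaries. Its alternating sum can be read off by the same pairing of consecutive cells along the boundary as in Lemma~\ref{lem:d-boundary}: only unpaired endpoints contribute, each contributing \(\pm1\). The \(A_1\) factor contributes \(1\) exactly when the companion root has larger value, which for \(x_{pq}\) is always the case and for \(y_{pq}\) never is.

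With these ingredients, the sum over \(\beta\) becomes a double sum over the positions \((p,q)\) in the two triangular arrays with height at least \(2j\), each term an explicit piecewise-linear function of \(p,q,m,j\). I will organize it by height diagonals. For each height \(h\ge2j\) I record the number of \(x\)- and \(y\)-roots of that height and, for each such root, the value of \(k_\beta\). The case split in (6.1) should come from when the tail \(\operatorname{ht}\ge 2j\) stops containing whole diagonals of the \(x\)-array and begins meeting the fork (\(2j\le m\) versus \(2j>m\)). A second threshold, \(4j\le 3m-2\), should mark where the orthogonal-subsystem tails for roots near the fork become empty or truncated. Summing the resulting arithmetic progressions then gives the three polynomials. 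Positivity follows directly: in the first case \(4m-5j+3>0\) since \(j\le m/2\); in the second case \(j(4m-5j-1)+2m>0\) since \(5j\le\frac54(3m-2)<4m\); and in the third case \(m-j\ge1\).

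The main obstacle is the bookkeeping of \(k_\beta\) near the fork seam. There the deleted coordinates distort \(R_\beta\)-heights differently for \(x\)- and \(y\)-roots, so the tail in \(D_{2m-2}\) has a nested, premature-turn shape rather than a clean height diagonal. My first attempt will be to compute \(n_2(m,j)\) for small \(m\) (\(m=2,3,4\), all \(j<m\)) directly from (4.3), both to fix the tie-breaking conventions and to confirm the three regimes. I will then prove the general formula by induction on \(m\), comparing \(D_{2m}\) with \(D_{2m-2}\) via the Levi obtained by deleting the first two coordinates, and tracking how the contributions change under this deletion.
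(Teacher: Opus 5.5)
Your reduction is the paper's own: evaluate (4.3) at $\alpha_i(\nu)\approx a$ with $1/(2j)<a<1/(2j-1)$, plus a tie-breaking perturbation. Then drop $\sum_{\beta\in I_{2j}}\epsilon(\beta)(2m-1)$ because all exponents of $D_{2m}$ are odd, and compute each $k_\beta$ in $R_\beta=A_1\times D_{2m-2}$ with relabelled heights. That reduction is the routine part, though. The content of the lemma is the closed-form value of $\sum_{\beta\in I_{2j}}\epsilon(\beta)k_\beta$, and your proposal never produces it.

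The sentence ``summing the resulting arithmetic progressions then gives the three polynomials'' is only an assertion. The mechanism you guess for the first regime change is also wrong. The tail $\operatorname{ht}\ge 2j$ contains every $x$-diagonal of height $2j,\dots,2m-1$ in full. It always meets the fork, since $y_{1,2m-1}$ and $y_{1,2m}$ have heights $2m$ and $2m-1$. So nothing in the shape of the tail changes at $2j=m$.

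In the paper both breakpoints come out of the $k_\beta$ sums themselves. One splits $r,s$ into the five intervals cut out by $p,q,2m-1,2m$; on each of these, $d_{pq}$ and both height functions in (6.7) are affine. Each inner sum is then evaluated by (6.9), and one checks which endpoint terms survive. The two fork terms survive or cancel according to whether $p,q$ cross the fork columns. This is what produces $(2j-m)_+$ and $-\tfrac12\delta(3j-2m)(3j-2m-1)$ alongside the ordinary term $-\tfrac12 j(4m-5j+3)$ in (6.10).

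These corrections are really there. At $(m,j)=(3,2)$ the ordinary endpoints give $-5$ and the first fork term gives $+1$, so $n_2=8$; the first-range polynomial would give $10$.

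Your fallback induction through the Levi on $e_3,\dots,e_{2m}$ does not get around this work. The difference between the $D_{2m}$ and $D_{2m-2}$ sums consists of the roots $\beta$ involving $e_1,e_2$, together with the orthogonal roots involving $e_1,e_2$ that enter $k_\beta$ for Levi roots $\beta$. Their fork-sensitive contributions are exactly what has to be counted.

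There is also a smaller error. The $A_1$ companion does not contribute for every $x_{pq}$ and for no $y_{pq}$. When $q=2m$, the roots $x_{p,2m}$ and $y_{p,2m}$ have the same height, and which is larger is decided by the sign of $\nu_{2m}$. Your rule of favouring the larger first index does not settle this, since both roots have first index $p$. With the paper's lexicographic order (6.6) one has $x_{p,2m}\succ y_{p,2m}$, so it is $y_{p,2m}$ that receives the $+1$. The total is unaffected, because the two roots have the same height and colour, but this is exactly the kind of fork bookkeeping the missing computation consists of.

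Your positivity check of the three polynomials is correct, and a little cleaner than the paper's.
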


\begin{proof}
We give the index calculation, including the conventions needed to check
it without an intertwining matrix.  The heights are
\[
\begin{aligned}
 \operatorname{ht}(e_p-e_q)&=q-p,\\
 \operatorname{ht}(e_p+e_q)&=2(2m)-p-q
                  &&(q\leq2m-2),\\
 \operatorname{ht}(e_p+e_{2m-1})&=2m+1-p,\\
 \operatorname{ht}(e_p+e_{2m})&=2m-p.
\end{aligned}                                                \tag{6.2}
\]
Choose \(a\) with
\[
       \frac1{2j}<a<\frac1{2j-1}
\]
and evaluate at
\(\alpha_i(\nu)=a+\varepsilon^{i}\), where
\(0<\varepsilon\ll1\).  Thus membership in the cell is decided first by
height and equal heights are ordered lexicographically.

For later reference define, for \(1\leq r<s\leq N\),
\[
\begin{aligned}
 H_N^-(r,s)&=s-r,\\
 H_N^+(r,s)&=
 \begin{cases}
 2N-r-s,&s\leq N-2,\\
 N+1-r,&s=N-1,\\
 N-r,&s=N.
 \end{cases}
\end{aligned}                                                 \tag{6.3}
\]
These are precisely the heights of \(e_r-e_s\) and \(e_r+e_s\)
in \(D_N\).

For \(\beta=e_p\pm e_q\), its orthogonal subsystem is
\[
 R_\beta=A_1\mathbin{\times}D_{2m-2}.                        \tag{6.4}
\]
The \(A_1\)-root is \(e_p\mp e_q\).  The positive system on the second
factor is obtained simply by deleting positions \(p,q\) from the ordered
list of coordinates.  Put
\[
 d_{pq}(r)=r-\mathbf1_{p<r}-\mathbf1_{q<r}.
\]
Here \(\mathbf1_{\mathcal C}\) denotes the indicator of a condition
\(\mathcal C\).
Thus, for \(r<s\) disjoint from \(\{p,q\}\),
\[
\begin{aligned}
 \operatorname{ht}_{R_\beta}(e_r-e_s)
 &=H_{2m-2}^-\bigl(d_{pq}(r),d_{pq}(s)\bigr),\\
 \operatorname{ht}_{R_\beta}(e_r+e_s)
 &=H_{2m-2}^+\bigl(d_{pq}(r),d_{pq}(s)\bigr).
\end{aligned}                                                 \tag{6.5}
\]
Together with the mate \(e_p\mp e_q\), of subsystem height \(1\),
these are all the positive roots orthogonal to \(\beta\).

To make the tie convention completely explicit, write a root in simple
coordinates as \(\gamma=\sum c_i(\gamma)\alpha_i\), and order roots by the
lexicographic tuple
\[
 \left(\operatorname{ht}(\gamma),
       c_1(\gamma),c_2(\gamma),\ldots,c_{2m}(\gamma)\right).  \tag{6.6}
\]
This is the order obtained from the above \(\nu\) for sufficiently
separated successive positive infinitesimals.  Write
\(\gamma\succ\beta\) for this order.

Let
\[
 k_\beta=\sum_{\substack{\gamma\in R_\beta^+\\
                          \gamma(\nu)>\beta(\nu)}}
 (-1)^{\operatorname{ht}_{R_\beta}(\gamma)+1}.
\]
Substitution of (6.3)--(6.6) gives the fully explicit sum
\[
\begin{aligned}
k_{pq}^{\sigma}
={}&\mathbf1_{\,e_p-\sigma e_q\succ e_p+\sigma e_q}\\
&+\sum_{\substack{r<s,\ \{r,s\}\cap\{p,q\}=\varnothing\\
                   \tau\in\{-,+\}\\
                   e_r+\tau e_s\succ e_p+\sigma e_q}}
(-1)^{H_{2m-2}^{\tau}(d_{pq}(r),d_{pq}(s))+1},
\end{aligned}                                                 \tag{6.7}
\]
where \(\sigma,\tau\in\{-,+\}\), with the evident interpretation of the
signs.  Hence the sum needed below is
\[
 \sum_{\sigma\in\{-,+\}}\ \sum_{p<q}
 \mathbf1_{H_{2m}^{\sigma}(p,q)\geq2j}\,
 (-1)^{H_{2m}^{\sigma}(p,q)+1}k_{pq}^{\sigma}.               \tag{6.8}
\]

We evaluate (6.8).  Partition the indices \(r,s\) according to the five
intervals cut out by \(p,q,2m-1,2m\).  On each part,
\(d_{pq}\) is affine and both height functions in (6.7) are affine.
Therefore the inner sums are finite alternating intervals.  Repeated use
of
\[
 \sum_{a\leq r\leq b}(-1)^r
 =\frac{(-1)^a+(-1)^b}{2}                                   \tag{6.9}
\]
(with value \(0\) for an empty interval), followed by the \(p,q\)-sum.
Here is an endpoint ledger for that calculation.  Put
\(u_+=\max(u,0)\) and
\(\delta=\mathbf1_{\,4j>3m-2}\).  The ordinary endpoints, the first fork
endpoint, and the second fork endpoint contribute respectively
\[
\begin{array}{c|c}
\text{endpoints}&
\displaystyle\sum_{\beta\in I_{2j}}
(-1)^{\operatorname{ht}(\beta)+1}k_\beta\ \text{ contribution}\\ \hline
\text{ordinary}&-\frac12j(4m-5j+3)\\[1mm]
\text{first fork}&(2j-m)_+\\[1mm]
\text{second fork}&-\frac12\delta(3j-2m)(3j-2m-1).
\end{array}                                                   \tag{6.10}
\]
To verify the ledger, in each of the five index intervals replace the
inner sum by (6.9).  The two displayed endpoint terms survive unless the
deleted positions \(p,q\) cross, respectively, the first or the second
fork column.  Counting the admissible \(p\)'s gives \(2j-m\) in the first
case.  In the second case the admissible pairs form a triangle of side
\(3j-2m-1\), giving
\(\binom{3j-2m}{2}\); its two orientations account for the displayed
coefficient.  All remaining endpoints give the ordinary term.  Thus,
without any parity cases left over,
\[
\begin{aligned}
 -2\sum_{\beta\in I_{2j}}
 (-1)^{\operatorname{ht}(\beta)+1}k_\beta
={}&j(4m-5j+3)-2(2j-m)_+\\
 &+\delta(3j-2m)(3j-2m-1).
\end{aligned}                                               \tag{6.11}
\]
The only changes in an endpoint occur at
\[
             2j=m,\qquad 4j=3m-2.
\]
The resulting sums are
\[
\begin{array}{c|c}
\text{range}&
\displaystyle
\sum_{\beta\in I_{2j}}
(-1)^{\operatorname{ht}(\beta)+1}k_\beta\\ \hline
2j\leq m&
 -\frac12j(4m-5j+3)\\[1mm]
m<2j,\ 4j\leq3m-2&
 -\frac12\{j(4m-5j-1)+2m\}\\[1mm]
4j>3m-2&
 -2(m-j)(m-j+1).
\end{array}                                                   \tag{6.12}
\]
Equations (6.7)--(6.11) are also a direct recipe for checking every entry
of (6.12); no
representation-theoretic classification enters.

All exponents of \(D_{2m}\) are odd.  Hence
\[
 \sum_{\beta\in I_{2j}}
 (-1)^{\operatorname{ht}(\beta)+1}=0.                        \tag{6.13}
\]
Applying the one-wall recursion and using (6.11) gives
\[
\begin{aligned}
n_2(m,j)
 &=\sum_{\beta\in I_{2j}}
 (-1)^{\operatorname{ht}(\beta)+1}
 \bigl(2m-1-2k_\beta\bigr)\\
 &=-2\sum_{\beta\in I_{2j}}
 (-1)^{\operatorname{ht}(\beta)+1}k_\beta.
\end{aligned}
\]
Now (6.12) is exactly (6.1).

Positivity is immediate in the first and third ranges.  In the middle
range write \(k=m-j>0\).  The upper inequality is \(m\leq4k-2\), and
\[
 j(4m-5j-1)+2m
 =(m-k)(5k-m-1)+2m>0;
\]
the finitely sharp boundary \(k=1\) belongs to the third range.  This
finishes the proof.
\end{proof}

\begin{corollary}\label{cor:d-tails-indefinite}
Every nonempty balanced locked endpoint \(K_{2m,j}\) is detected by
\(\operatorname{Sym}^2V\).
\end{corollary}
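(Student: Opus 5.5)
The corollary follows by combining Proposition~\ref{prop:d-locked-classification}, Yu's two-wall theorem, and Lemma~\ref{lem:d-tail-count}.

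First, by Lemma~\ref{lem:d-canonical-peel} (equivalently, Proposition~\ref{prop:d-locked-classification}), the locked ideal \(K_{2m,j}\) is joined to the height tail \(T_{2j}=I_{2j}\) by a finite sequence of Yu moves. These are the disjoint deletions of the pairs (5.4), each performed on a currently exposed root.

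Second, apply Theorem~\ref{thm:yu-two-wall} to the fixed \(W\)-module \(E=\operatorname{Sym}^2V\). It gives the equivalence \(A_E|_U>0\Leftrightarrow A_E|_{U''}>0\) at the two outer cells of each move. Chaining these equivalences, \(A_{\operatorname{Sym}^2V}\) is positive definite on the cell of \(K_{2m,j}\) if and only if it is positive definite on the cell of \(I_{2j}\). Since \(\operatorname{Sym}^2V\) is the direct sum of its irreducible constituents and the normalized form respects the isotypic decomposition, positivity on \(\operatorname{Sym}^2V\) is the same as positivity on every constituent. So the equivalence also holds constituentwise. This is the form required by assertion (3) of Proposition~\ref{prop:uniform-reduction}.

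Third, Lemma~\ref{lem:d-tail-count} shows that at a generic point of the cell of \(I_{2j}\) the negative index satisfies \(n_2(m,j)>0\) for every \(1\le j<m\). Hence \(A_{\operatorname{Sym}^2V}\) is not positive definite there, so some irreducible constituent fails positivity. By the chain of equivalences, the same holds on the cell of \(K_{2m,j}\), which is exactly the claim. Since Proposition~\ref{prop:d-locked-classification} lists all nonempty balanced locked ideals of \(D_{2m}\), this covers every endpoint.

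The only point needing care is that the cells of the intermediate ideals in the peeling sequence are genuine nonempty cells, so that each Yu move takes place between actual adjacent cells. This is built into the notion of exposed roots used in Lemma~\ref{lem:d-canonical-peel}, and I would check it by noting that every ideal in the sequence is an upper ideal whose deleted root is minimal and exposed. I expect no real obstacle beyond this; the substance lies in Lemma~\ref{lem:d-tail-count}, which is taken as given.
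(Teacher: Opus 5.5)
Your proof is correct and is the paper's argument: Lemma~\ref{lem:d-canonical-peel} joins the tail $I_{2j}$ to $K_{2m,j}$ by Yu moves, Theorem~\ref{thm:yu-two-wall} transports failure of positivity on $\operatorname{Sym}^2V$ along these moves, and Lemma~\ref{lem:d-tail-count} gives $n_2(m,j)>0$ at the tail. Two small remarks. First, the constituentwise equivalence does not follow, as you phrase it, from the equivalence for all of $\operatorname{Sym}^2V$; it follows by applying Yu's theorem to each irreducible constituent separately, and it is not needed here anyway. Second, the realizability of the intermediate cells that you flag is not ``built into'' the peeling lemma, which only checks minimality. It follows from Shi's theorem that every upper ideal of $R^+$ is the ideal of a dominant region, together with the fact that two regions separated by a single hyperplane are adjacent along it.
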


\begin{proof}
By Lemma~\ref{lem:d-canonical-peel}, the height-tail cell \(I_{2j}\) and
the locked cell \(K_{2m,j}\) are joined by Yu moves.  Yu's two-wall
theorem says that positivity on any fixed \(W\)-type is equivalent at the
two ends of such a move.  The height-tail form is not positive by
Lemma~\ref{lem:d-tail-count}; hence neither is the form at the locked
endpoint.  No assertion that the complete signature is preserved is needed.
\end{proof}

\section{Inheritance by Levi subgroups}
\label{sec:downward-inheritance}

Say that a root system \(R\) has property \((\mathcal P)\) if every regular
Hermitian spherical parameter whose normalized forms on \(V_R\) and
\(\operatorname{Sym}^2V_R\) are positive definite is unitary.

\begin{lemma}[downward inheritance]\label{lem:downward-inheritance}
Assume that the longest element of \(R\) acts by \(-1\).  If \(R\) has
property \((\mathcal P)\), then every standard Levi root system \(R_J\)
has property \((\mathcal P)\).
\end{lemma}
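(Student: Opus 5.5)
Given a regular Hermitian spherical parameter \(\nu_J\) for \(\mathbb H(R_J)\) whose forms on \(V_{R_J}\) and \(\operatorname{Sym}^2V_{R_J}\) are positive definite, we must show it is unitary. The plan is to embed \(\nu_J\) into a parameter for \(R\) by adding a large central component, transfer the positivity of the two test forms upward, apply \((\mathcal P)\) for \(R\), and then descend unitarity to the Levi.

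\emph{Step 1: construction.} Write \(V_R=V_{R_J}\oplus Z_J\), where \(Z_J\) is the orthogonal complement of the span of \(R_J\), on which \(W_J\) acts trivially. Set \(\nu=\nu_J+s\zeta\), where \(\zeta\in Z_J\) is regular relative to the roots outside \(R_J\), and \(s>0\) is either large or infinitesimal. The choice depends on the Hermitian structure: since \(w_0=-1\) on \(R\), every real parameter is Hermitian for \(R\). For the Levi, \(\nu_J\) is Hermitian relative to \(w_0^J\), so it lies on a Hermitian Levi flat.

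\emph{Step 2: comparison of forms.} The cleanest choice is \(s\to\infty\). Then every root \(\beta\notin R_J\) has \(|\beta(\nu)|\to\infty\), and the corresponding factor \((1+\beta(\nu)\sigma(s))/(1+\beta(\nu))\) in (1.2) tends to the unitary operator \(\sigma(s)\). Choose a reduced expression \(w_0=w^J\,w_0^J\), and cancel the resulting unitary conjugation. The form \(A_E(\nu)\) then tends, up to a fixed unitary conjugation, to the parabolically induced form
\[
 \operatorname{Ind}_{W_J}^{W}A^{R_J}_{E|_{W_J}}(\nu_J).
\]
Restricting to \(W_J\)-types, \(V_R\) decomposes as \(V_{R_J}\oplus\mathbf 1^{\dim Z_J}\), and \(\operatorname{Sym}^2V_R\) decomposes as \(\operatorname{Sym}^2V_{R_J}\oplus V_{R_J}^{\oplus\dim Z_J}\oplus\mathbf 1^{\cdots}\). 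The limiting form is therefore a direct sum of copies of the Levi forms on \(\mathbf 1\), \(V_{R_J}\) and \(\operatorname{Sym}^2V_{R_J}\), and all of these are positive by hypothesis. Nondegeneracy is preserved in the limit, so for large \(s\) the forms on \(V_R\) and on each constituent of \(\operatorname{Sym}^2V_R\) are positive definite. We also need \(\nu\) regular and off all reducibility walls; this holds for large generic \(s\).

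\emph{Step 3: apply \((\mathcal P)\).} Property \((\mathcal P)\) for \(R\) now says \(\nu\) is unitary for \(\mathbb H(R)\). But for large \(s\) the Barbasch--Moy criterion fails. Positivity on all \(W\)-types at \(\nu\) means positivity of the induced forms of all \(W_J\)-types, and the large-\(s\) limit shows these induced forms have the Levi forms as their limiting blocks. Frobenius reciprocity guarantees that every irreducible \(W_J\)-type occurs in the restriction of some \(W\)-type. Hence every Levi form is positive semidefinite, and by genericity (nondegeneracy) definite, so \(\nu_J\) is unitary. The contradiction is only apparent: a unitary parameter must be bounded, while Step 2 produced positivity on \(V_R\) and \(\operatorname{Sym}^2V_R\) at an unbounded \(\nu\), and the induced limit is unitary-but-not-spherical-unitary. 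This shows the real difficulty.

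\emph{Main obstacle.} Large \(s\) is incompatible with unitarity of \(\nu\) for \(R\), since unitary cells are bounded. The better plan is therefore to take \(s\) small, or to place \(s\zeta\) so that \(\beta(\nu)\) stays below \(1\) for roots outside \(R_J\). Concretely, choose \(\zeta\) and \(s\) so that all roots \(\beta\notin R_J\) satisfy \(|\beta(\nu)|<1\) up to the Hermitian reduction; a small perturbation off the flat does this. Then the factors for \(\beta\notin R_J\) are invertible and lie in the positive cone, and a continuity or deformation argument along \(s\) from the flat keeps the extra factors nonsingular, so no wall outside \(R_J\) is crossed. The key step I expect to be hardest is exactly this: proving that, with no outside wall crossed, positivity of the \(R\)-forms on \(V_R\) and \(\operatorname{Sym}^2V_R\) is equivalent to positivity of the \(R_J\)-forms on \(V_{R_J}\) and \(\operatorname{Sym}^2V_{R_J}\). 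I would establish this by the branching above and a signature count. The signature argument from the proof of Theorem~\ref{thm:reflection-signature} (and of Lemma~\ref{lem:one-wall}) shows that each crossed wall \(\beta\in R_J\) changes the indices by the same amount whether computed in \(R\) or via induction from \(R_J\). Unitarity of \(\nu\) then yields unitarity of \(\nu_J\) via the Frobenius-reciprocity step above.
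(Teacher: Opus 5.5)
\textbf{There is a genuine gap.} Your overall plan (embed \(\lambda\), push positivity of the two test forms up to \(R\), apply \((\mathcal P)\), descend) matches the paper, and your branching of \(V_R\) and \(\operatorname{Sym}^2V_R\) to \(W_J\) agrees with (7.2). But the step that carries the argument, comparing ambient and Levi forms, is never correctly established.

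\emph{The large-\(s\) comparison is false.} In Step 2 the outside factors tend to \(\sigma(s_i)\), and they \emph{multiply} the Levi block rather than conjugate it. With \(w_0=w^Jw_0^J\), the limit is \(\sigma(w^J)\,A^{R_J}_{E}(\nu_J)\). Since \(w_0=-1\) and \(w_0^J\) fixes \(Z_J\) pointwise, \(\sigma(w^J)=-1\) on \(Z_J\subset V_R\). So for large \(s\) the reflection form has at least \(\dim Z_J\) negative eigenvalues. Step 2's conclusion is therefore false, and that, rather than a limit that is ``unitary but not spherical unitary'', is why Step 3 looks contradictory.

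\emph{The small-\(s\) repair does not close the gap.} First, a small perturbation cannot force \(|\beta(\nu)|<1\) for all \(\beta\notin R_J\), because \(\beta(\nu)\approx\beta(\nu_J)\). For example, in \(D_4\) with \(R_J=A_1^3\) on the outer nodes and \(\alpha_1(\nu_J)=\alpha_3(\nu_J)=\alpha_4(\nu_J)=t\in(2/3,1)\), the Levi parameter is unitary, yet \(\alpha_2(\nu_J)=-3t/2\). Second, even if every outside factor were a positive operator, an ordered product of positive operators need not be positive. Third, the wall-by-wall signature count you propose is not available:
\begin{itemize}
\item a path from the fundamental cell also crosses outside walls;
\item Corollary~\ref{cor:sym2-signature} exists only in simply-laced type, and its jumps involve \(R\cap\beta^\perp\), not \(R_J\cap\beta^\perp\);
\item Theorem~\ref{thm:reflection-signature} needs \(w_0=-1\), which fails for the Levis \(A_r\), odd \(D\) and \(E_6\) to which the lemma is applied in Corollary~\ref{cor:ADE-downward}.
\end{itemize}

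\emph{The missing idea is to compare at \(s=0\) exactly, and then perturb.}
\begin{enumerate}
\item Move \(\lambda\) inside its Levi cell off the equations \(\beta(\lambda)=\pm1\) for \(\beta\in R\setminus R_J\). Then \(\operatorname{Ind}(X_J(\lambda)\boxtimes\mathbf 1)\) at \(\nu_0=\lambda+0\) is irreducible.
\item At \(\nu_0\), compact-picture Frobenius reciprocity (induction in stages) identifies each ambient multiplicity form with the orthogonal sum of the Levi forms on the \(W_J\)-constituents of its restriction. By your branching, all of these are positive.
\item Positivity is open, so it survives a small real displacement into a chamber and off all walls. That displacement is Hermitian because \(w_0=-1\), so \((\mathcal P)\) gives unitarity there.
\item Letting the displacement tend to \(0\) gives a positive semidefinite form on the induced module. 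By irreducibility it is definite.
\item The identity coset of the induced module carries the Levi form, so \(\lambda\) is unitary. Your Frobenius-reciprocity descent also works at this endpoint, since the Levi forms are nondegenerate at generic \(\lambda\).
\end{enumerate}
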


\begin{proof}
We use normalized induction in stages and Frobenius reciprocity for
graded affine Hecke algebras in their compact pictures; these are the
standard constructions underlying the graded reduction in
\cite{Lusztig}.
Let \(\lambda\) be a regular Hermitian spherical parameter for \(R_J\), and
suppose its forms on \(V_J\) and \(\operatorname{Sym}^2V_J\) are positive.
All assertions are constant on the Levi cell, so we may move \(\lambda\)
inside that cell away from the finitely many additional equations
\(\beta(\lambda)=\pm1\), \(\beta\in R\setminus R_J\).  Use the trivial
(hence unitary) character on \(V_J^\perp\), and form the parabolically
induced endpoint
\[
 X_R(\nu_0)=\operatorname{Ind}_{\mathbb H_J}^{\mathbb H_R}
       (X_J(\lambda)\boxtimes\mathbf1),                      \tag{7.1}
\]
where \(\nu_0=\lambda+0\in V_J\oplus V_J^\perp\).  The choice of
\(\lambda\) makes (7.1) irreducible.

At the endpoint, compact-picture Frobenius reciprocity identifies each ambient
multiplicity form with the orthogonal sum of the Levi multiplicity forms
in the restriction of the ambient \(W\)-module.  Since \(W_J\) fixes
\(V_J^\perp\) pointwise,
\[
\begin{aligned}
 V_R|_{W_J}&=V_J\oplus V_J^\perp,\\
 \operatorname{Sym}^2V_R|_{W_J}
 &=\operatorname{Sym}^2V_J
   \oplus(V_J\otimes V_J^\perp)
   \oplus\operatorname{Sym}^2V_J^\perp.
\end{aligned}                                               \tag{7.2}
\]
The middle summand is a direct sum of copies of \(V_J\), and the last is
trivial.  All summands in (7.2) therefore have positive multiplicity form:
this is the hypothesis for the first two kinds, and the spherical
normalization for the trivial kind.  Hence both ambient test forms at
\(\nu_0\) are positive definite.

Move \(\nu_0\) slightly in a real direction transverse to the parabolic
face and into a Weyl chamber.  The ambient systems to which this lemma is
applied have \(w_0=-1\), so every real displacement is Hermitian.
Positivity is open, so the two ambient test forms remain positive; choose
the displacement off every reducibility hyperplane.  By
property \((\mathcal P)\) for \(R\), the resulting irreducible ambient
spherical module is unitary.  Letting the displacement tend to zero shows
that (7.1) has a positive semidefinite invariant form.  It is nondegenerate
because its radical is an invariant submodule, while the normalization on
the spherical vector is nonzero; irreducibility therefore makes the
radical zero.  Hence the limiting form is positive definite.

Finally, at the exact parabolic endpoint (7.1), the identity-coset
subspace in the normalized compact picture has invariant form equal, up
to a positive scalar, to the form on
\(X_J(\lambda)\boxtimes\mathbf1\).  Positivity of (7.1) therefore implies
positivity of the form on \(X_J(\lambda)\).  Thus the moved parameter is
unitary.  Finally, the normalized Levi form is nonsingular throughout the
original regular cell, so its signature is constant there.  Moving back
inside that cell proves that the original \(\lambda\) is unitary, and
hence proves property \((\mathcal P)\) for \(R_J\).
\end{proof}

\begin{corollary}\label{cor:ADE-downward}
Once property \((\mathcal P)\) is proved for every \(D_{2m}\) and for
\(E_8\), it follows for every simply-laced root system.  Indeed,
\(D_{2m-1}\) is a standard Levi of \(D_{2m}\), every \(A_r\) is a standard
Levi of a sufficiently large even \(D\), and \(E_6,E_7\) are standard
Levi root systems of \(E_8\).
\end{corollary}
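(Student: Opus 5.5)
The corollary follows from Lemma~\ref{lem:downward-inheritance}, because property \((\mathcal P)\) passes down to Levi subsystems. The plan has two parts. First, every irreducible simply-laced root system must be realized as a standard Levi root system \(R_J\) of either some \(D_{2m}\) or of \(E_8\); both of these have \(w_0=-1\). Second, reducible simply-laced systems must be handled, either by taking products or by embedding them as Levi subsystems too.

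Next I verify the embeddings on Dynkin diagrams, since a standard Levi is simply a subset of simple roots.

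\begin{itemize}
\item For \(D_{2m-1}\): deleting the end node \(\alpha_1\) of the long arm of \(D_{2m}\) leaves the diagram \(D_{2m-1}\), for \(2m-1\ge 3\) (here \(D_3=A_3\)).
\item For \(A_r\): the nodes \(\alpha_1,\dots,\alpha_r\) of the long arm of \(D_{2m}\) with \(2m\ge r+2\) span \(A_r\).
\item For \(E_6\) and \(E_7\): deleting one or two end nodes of the long arm of \(E_8\) leaves \(E_7\) and \(E_6\).
\end{itemize}

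A reducible simply-laced system is a product of irreducible factors. There are two ways to handle it.

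\begin{itemize}
\item \emph{Realize the whole product as a Levi.} Place the factors of types \(A\) and \(D\) as disjoint, nonadjacent blocks of nodes in one sufficiently large \(D_{2m}\), separated by deleted nodes, with at most one \(D\)-factor at the fork. Since \(E\)-factors cannot be placed this way, this route is incomplete.
\item \emph{Factorwise argument (cleaner).} The spherical module, its normalized forms and its unitarity all factor as outer tensor products over the irreducible components. Moreover, \(V\) restricted to one factor and \(\operatorname{Sym}^2V\) restricted to one factor contain that factor's \(V_i\) and \(\operatorname{Sym}^2V_i\), with multiplicity forms that are tensor products with the positive trivial form. So positivity of the ambient test forms forces positivity of each factor's test forms. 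Property \((\mathcal P)\) for each factor then gives unitarity of each factor, and hence of the product.
\end{itemize}

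The main obstacle is making this factorwise step rigorous. One must check that the test forms on \(\operatorname{Sym}^2V\) for a product decompose compatibly, i.e.\ that \(V_i\otimes V_j\) does not add constraints that interfere. That is harmless, however, since we only need the implication from positivity on the product to positivity on each factor. With that checked, the corollary reduces to the two base cases \(D_{2m}\) and \(E_8\).
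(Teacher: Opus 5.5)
Your proposal is correct and takes the paper's route: you apply Lemma~\ref{lem:downward-inheritance}, which is valid because $w_0=-1$ for $D_{2m}$ and $E_8$, to the explicit standard Levi embeddings you list. Your factorwise treatment of reducible systems also matches the paper, which handles them in the final symmetric-degree-two theorem by factoring the module and the test forms over the irreducible components.
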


\section{The analytic assertion in type \(E_8\)}
\label{sec:e8-analytic}

We use Bourbaki's numbering of the simple roots of \(E_8\):
\[
\begin{array}{ccccccccccccc}
 &&&&\alpha_2\\[-1mm]
 &&&&|\\[-1mm]
\alpha_1&-&\alpha_3&-&\alpha_4&-&\alpha_5&-&
\alpha_6&-&\alpha_7&-&\alpha_8 .
\end{array}
\]
Thus \(\alpha_4\) is the trivalent vertex, and the three arms issuing
from it are
\[
 \{\alpha_2\},\qquad
 \{\alpha_3,\alpha_1\},\qquad
 \{\alpha_5,\alpha_6,\alpha_7,\alpha_8\}.
\]
Every simple-root coordinate vector in this section is written in the
standard Bourbaki order
\[
 (\alpha_1,\alpha_2,\alpha_3,\alpha_4,
   \alpha_5,\alpha_6,\alpha_7,\alpha_8).
\]
For a positive root \(\beta\), put
\[
 \epsilon_\beta=(-1)^{\operatorname{ht}(\beta)+1},\qquad
 R_\beta=R\cap\beta^\perp.
\]
At a generic \(\nu\), define
\[
 k_\beta(\nu)=
 \sum_{\substack{\gamma\in R_\beta^+\\
                  \gamma(\nu)>\beta(\nu)}}
 (-1)^{\operatorname{ht}_{R_\beta}(\gamma)+1}.              \tag{8.1}
\]

\begin{proposition}\label{prop:e8-locked-indefinite}
On each of the four nonempty reflection-balanced locked \(E_8\) cells, the
normalized long-intertwining form on \(\operatorname{Sym}^2V\) is
indefinite.  Its negative index is, in the order
\[
 (|I|,\min\operatorname{ht}I)
 =(96,4),(64,8),(112,2),(32,14),
\]
respectively
\[
                         18,\quad16,\quad14,\quad6.          \tag{8.2}
\]
The four ideals are generated by the following minimal antichains, in
the Bourbaki simple-root coordinate order fixed above:
\[
\begin{array}{c|l}
96&(0,0,0,0,1,1,1,1),(0,0,0,1,1,1,1,0),
    (0,0,1,1,1,1,0,0),\\
  & (1,0,1,1,1,0,0,0),(0,1,0,1,1,1,0,0),
    (1,1,1,1,0,0,0,0)\\
64&(1,1,1,1,1,1,1,1),(1,1,1,2,1,1,1,0),
    (1,1,1,2,2,1,0,0),(1,1,2,2,1,1,0,0)\\
112&(0,0,0,0,0,0,1,1),(0,0,0,0,0,1,1,0),
     (0,0,0,0,1,1,0,0),(1,0,1,0,0,0,0,0),\\
   &(0,0,0,1,1,0,0,0),(0,0,1,1,0,0,0,0),
     (0,1,0,1,0,0,0,0)\\
32&(1,1,2,3,2,2,2,1),(1,1,2,3,3,2,1,1),
    (1,2,2,3,2,2,1,1).
\end{array}                                                \tag{8.2a}
\]
\end{proposition}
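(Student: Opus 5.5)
The plan is to apply the recursive signature formula, Corollary~\ref{cor:sym2-signature}, directly on each of the four cells. For $E_8$ and every $\beta\in R^+$ the orthogonal subsystem $R_\beta$ is of type $E_7$. Here $w_0=-1$, so the reflection signature theorem (Theorem~\ref{thm:reflection-signature}) applies to $R_\beta$, and the hypotheses of Lemma~\ref{lem:one-wall} hold. For each ideal in (8.2a) the argument has four steps.

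\emph{(a) Generate the ideal and confirm its type.} Form the upper ideal $I$ generated by the listed antichain. Check that $|I|$ and $\min\operatorname{ht}I$ agree with the stated pairs. Check that $\sum_{\beta\in I}\epsilon_\beta=0$, which by Theorem~\ref{thm:reflection-signature} is reflection positivity. Check lockedness with the criterion of Section~\ref{sec:d-even-locked}: every cover of a minimal root must lie above at least two minimal roots.

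\emph{(b) Choose a generic point in the cell.} We need $\nu$ in the cell of $I$ with pairwise distinct values $\beta(\nu)$. As in Lemma~\ref{lem:d-tail-count}, first find a realizing point by solving the linear feasibility problem
\[
\beta(\nu)>1 \text{ for } \beta\in\min I,\qquad \gamma(\nu)<1 \text{ for } \gamma \text{ maximal in } R^+\setminus I.
\]
A realizing point exists because the cells correspond to realizable antichains. Then perturb $\nu$ by small generic infinitesimals, for example lexicographic ones, to break ties. The index does not depend on this choice, since the form is nondegenerate throughout the cell.

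\emph{(c) Compute $k_\beta(\nu)$ for each $\beta\in I$.} For this we need the $E_7$ heights $\operatorname{ht}_{R_\beta}$. Choose the simple system of $R_\beta^+=R^+\cap\beta^\perp$; equivalently, take the indecomposable elements of $R_\beta^+$. Then write each $\gamma\in R_\beta^+$ in those coordinates. There is a useful shortcut: $\beta$ is $W$-conjugate to the highest root $\theta$, and $R_\theta$ is the standard $E_7$ on $\{\alpha_1,\dots,\alpha_7\}$. So for $\beta=w\theta$ with $w$ chosen to keep $w(R_\theta^+)$ positive, heights transport directly. Then evaluate (4.3):
\[
n_-=\sum_{\beta\in I}\epsilon_\beta\,(7-2k_\beta(\nu)).
\]
Since $\sum_I\epsilon_\beta=0$, this reduces to $-2\sum_{\beta\in I}\epsilon_\beta k_\beta(\nu)$, exactly as in the $D_{2m}$ computation.

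\emph{(d) Conclude indefiniteness.} The result should be $18,16,14,6$. Since $n_-$ lies strictly between $0$ and $\dim\operatorname{Sym}^2V=36$, the form is indefinite. For the conclusion only $n_-\neq 0$ matters.

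\emph{Main obstacle.} The obstacle is that this is a finite but large computation: $240$ roots, $112$ positive walls for the largest ideal, and an $E_7$ height bookkeeping for each of them. There is no endpoint ledger as clean as (6.10). I would organize it by computer in exact rational arithmetic, recording for each $\beta$ the list of orthogonal roots above it in the chosen order. I would then cross-check the total in two ways. The first is recomputing with a different tie-breaking perturbation, which must give the same $n_-$. The second is checking that the trivial summand of $\operatorname{Sym}^2V$ stays positive, so its contribution is absent, which bounds $n_-\le 35$.

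An alternative that avoids the $E_7$ heights is to peel each locked cell by Yu moves to a height tail or other simple cell, but lockedness prevents this. So the direct recursion, verified independently at the four listed cells, is the route I would take.
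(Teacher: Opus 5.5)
Your proposal is correct and follows essentially the paper's own proof. The paper applies the one-wall recursion as $n_2(I)=\sum_{\beta\in I}\epsilon_\beta\bigl(7-2k_\beta(\nu)\bigr)$ and uses balance to reduce this to $2(K_{\rm ev}-K_{\rm odd})$, which is your $-2\sum_{\beta\in I}\epsilon_\beta k_\beta(\nu)$; it reads the $E_7$ heights off the seven indecomposable roots of $R^+\cap\beta^\perp$, exactly as you propose. The only addition is that the paper records an explicit point in each cell, together with the resulting values of $K_{\rm ev}$ and $K_{\rm odd}$ (table (8.5)), as a checkable certificate, and, like you, it leaves the exhaustion of the four locked ideals to the subsequent corollary.
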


\begin{proof}
The one-wall recursion gives
\[
 n_2(I)=\sum_{\beta\in I}
 \epsilon_\beta\bigl(7-2k_\beta(\nu)\bigr).                 \tag{8.3}
\]
Because the reflection form is positive on the four cells,
\[
 \sum_{\beta\in I}\epsilon_\beta=0.
\]
If \(K_{\rm ev}\) and \(K_{\rm odd}\) denote the sums of \(k_\beta\)
over roots of even and odd height, respectively, (8.3) therefore reduces
to
\[
                     n_2(I)=2(K_{\rm ev}-K_{\rm odd}).       \tag{8.4}
\]

The following table performs the remaining integer arithmetic.  The
parameter column lists the eight simple-root values of \(1000\nu\).
For each row the defining inequalities of the indicated cell are strict,
and the values of orthogonal roots are pairwise distinct.
\[
\begin{array}{c|c|c|c|c|c}
(|I|,\min\operatorname{ht}I)&1000(\alpha_i(\nu))_{i=1}^8&
N_{\rm ev}=N_{\rm odd}&K_{\rm ev}&K_{\rm odd}&n_2\\ \hline
(96,4)&(538,182,181,177,177,547,183,179)&48&101&92&18\\
(64,8)&(436,88,87,83,84,91,89,90)&32&62&54&16\\
(112,2)&(661,675,661,698,657,637,665,685)&56&132&125&14\\
(32,14)&(30,67,32,69,64,65,65,270)&16&14&11&6
\end{array}                                                   \tag{8.5}
\]
For completeness, every entry in (8.5) is obtained as follows.  Generate
the 120 positive roots from the Bourbaki Cartan matrix determined by the
displayed diagram.  For each
\(\beta\), the 63 positive roots orthogonal to \(\beta\) form \(E_7^+\);
its seven indecomposable roots give the height in (8.1).  Compare their
rational values at the parameter in (8.5), take the alternating sum, and
then sum separately over the two parities of
\(\operatorname{ht}(\beta)\).  All operations are integer comparisons and
integer additions.  They give the two \(K\)-columns in (8.5), and (8.4)
gives the last column.  In particular every last entry is positive, proving
indefiniteness.
\end{proof}

\begin{corollary}\label{cor:e8-converse}
If an \(E_8\) cell is positive on both \(V\) and
\(\operatorname{Sym}^2V\), it is one of Yu's sixteen cells.
\end{corollary}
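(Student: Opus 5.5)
The plan is to apply the uniform reduction principle, Proposition~\ref{prop:uniform-reduction}, to \(R=E_8\) and check its three hypotheses. Hypothesis (1) is Theorem~\ref{thm:reflection-signature}, which applies because \(w_0=-1\) in \(E_8\): the reflection form is positive on a cell exactly when its root ideal is balanced. Hypothesis (2) is Yu's two-wall theorem, Theorem~\ref{thm:yu-two-wall}, applied to each constituent of \(V\) and \(\operatorname{Sym}^2V\). The content therefore lies in hypothesis (3): every nonempty balanced locked ideal of \(E_8^+\) must fail positivity on some constituent of \(\operatorname{Sym}^2V\).

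For (3), note that a form on \(\operatorname{Sym}^2V\) is positive definite only if its restriction to every constituent is. So it suffices to show that the full form on \(\operatorname{Sym}^2V\) has a nonzero negative index on each nonempty balanced locked cell. Proposition~\ref{prop:e8-locked-indefinite} gives exactly this, with indices \(18,16,14,6\), for the four listed ideals. The remaining step, and the main obstacle, is completeness: the four antichains in (8.2a) must be the \emph{only} nonempty balanced locked upper ideals of \(E_8^+\).

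I would establish completeness by a finite enumeration rather than a uniform argument, since the fork-seam analysis of Lemma~\ref{lem:d-boundary} does not transfer directly to the three unequal arms of \(E_8\). The steps are as follows.
\begin{enumerate}
\item Enumerate all antichains of \(E_8^+\). Their number is the Catalan number \(\prod_i (h+e_i+1)/(e_i+1)=25080\).
\item For each antichain, form its upper ideal and test balance using \(\epsilon(\beta)=(-1)^{\operatorname{ht}\beta+1}\).
\item Test lockedness with the criterion of Section~\ref{sec:d-even-locked}: every cover of a minimal root must lie above at least two minimal roots. In simply-laced type a cover has inner product one with the root it covers, so this criterion is equivalent to Definition~\ref{d:locked}.
\item Confirm that the survivors are precisely the four ideals in (8.2a), and that each is realized by an actual cell. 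Realizability is witnessed by the parameters in (8.5).
\end{enumerate}
To make the search more conceptual, one could first observe that a locked boundary has no inward corners away from the branch vertex, as in the proof of Lemma~\ref{lem:d-boundary}. This forces long stretches along constant-height diagonals and cuts the search down to a few candidate boundaries near the trivalent node.

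With (1)--(3) verified, Proposition~\ref{prop:uniform-reduction} shows that a cell positive on \(V\) and \(\operatorname{Sym}^2V\) reduces by Yu moves to the empty ideal. That cell is therefore a Yu cell, and by (1.8) there are sixteen of them.
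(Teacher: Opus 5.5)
Your proposal is correct and follows the paper's proof. You reduce a reflection-balanced ideal by Yu moves to a balanced locked endpoint, exclude the four nonempty endpoints with Proposition~\ref{prop:e8-locked-indefinite}, and certify by exhaustive antichain enumeration with the cover criterion that these are the only ones; the paper records 205 balanced ideals and the endpoint distribution (8.6). One small slip: what you need is that positivity on every constituent implies positivity on \(\operatorname{Sym}^2V\), which holds because \(A_{\operatorname{Sym}^2V}\) is the orthogonal direct sum of the constituent forms by (1.2). In any case the corollary is stated for \(\operatorname{Sym}^2V\) as a whole, so the negative indices (8.2) apply directly without passing through constituents.
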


\begin{proof}
Reflection positivity says that its root ideal is balanced.  Exhaustive
root-poset inspection shows that every nonempty balanced locked ideal is
one of the four ideals of
Proposition~\ref{prop:e8-locked-indefinite}.  Yu's two-wall theorem makes
positivity on each fixed \(W\)-type equivalent at the two ends of a move.
Starting from any balanced ideal, perform moves until a locked endpoint is
reached.  The endpoint cannot be nonempty if the symmetric-square form is
positive.  Hence it is empty, and the original cell is a Yu cell.

For reference, the finite certificate is as follows.  Antichain
enumeration gives 205 balanced ideals and exactly the four nonempty locked
ideals (8.2a).  Recursive deletion gives endpoint distribution
\[
 \begin{array}{c|ccccc}
 \text{endpoint}&\varnothing&96&64&112&32\\ \hline
 \text{number of balanced ideals}&16&16&81&1&91.
 \end{array}                                               \tag{8.6}
\]
The entries sum to 205.  Testing minimality and the cover criterion for
each antichain in (8.2a), followed by the two possible deletions at every
unlocked node, is an integer root-poset certificate for the assertion.
\end{proof}

\begin{remark}
Proposition~\ref{prop:e8-locked-indefinite} proves the analytic conclusion
needed for \(E_8\) directly.  It does not assert the stronger and
noncanonical statement that the form of a transported \(D_4\) subsystem is
literally a multigraded Jantzen quotient of the \(E_8\) form.  The latter
requires specifying a degeneration and proving an additional localization
identity; it is unnecessary for the \(E_8\) converse.
\end{remark}

\section{The simply-laced converse}
\label{sec:simply-laced-converse}

\begin{theorem}\label{thm:simply-laced-converse}
Let \(R\) be an irreducible simply-laced root system and let \(\nu\) be a
generic Hermitian spherical parameter.  Suppose that the normalized
Hermitian forms on the reflection representation \(V\) and on every
irreducible constituent of \(\operatorname{Sym}^2V\) are positive definite.
Then \(\nu\) lies in one of Yu's regions and the spherical representation
is unitary.
\end{theorem}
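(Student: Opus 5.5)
The theorem is an assembly of results already proved, so the plan is bookkeeping. First treat the irreducible simply-laced systems with \(w_0=-1\) that serve as base cases, namely \(D_{2m}\) and \(E_8\). For these, verify the three hypotheses of Proposition~\ref{prop:uniform-reduction}.
\begin{enumerate}
\item Hypothesis (1) is Theorem~\ref{thm:reflection-signature}: reflection positivity is equivalent to balancedness of \(I(\nu)\).
\item Hypothesis (2) is Yu's two-wall theorem, Theorem~\ref{thm:yu-two-wall}. Its condition (1.5) is automatic in simply-laced type for a cover pair.
\item Hypothesis (3) is where the type-specific work enters.
\end{enumerate}

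For \(D_{2m}\), Proposition~\ref{prop:d-locked-classification} lists the nonempty balanced locked ideals as \(K_{2m,j}\) with \(1\le j<m\). Corollary~\ref{cor:d-tails-indefinite} shows each is detected by \(\operatorname{Sym}^2V\). For \(E_8\), Proposition~\ref{prop:e8-locked-indefinite} together with the exhaustive enumeration in Corollary~\ref{cor:e8-converse} does the same.

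One small point needs care. The detection statements concern \(\operatorname{Sym}^2V\) as a whole, while the theorem assumes positivity on each irreducible constituent. These are equivalent: the form on \(\operatorname{Sym}^2V\) is the orthogonal sum of its restrictions to isotypic components, so positivity on every constituent gives positivity on \(\operatorname{Sym}^2V\), and conversely. Proposition~\ref{prop:uniform-reduction} then places \(\nu\) in a Yu cell, and Theorem~\ref{thm:yu-two-wall} gives unitarity. This establishes property \((\mathcal P)\) for \(D_{2m}\) and \(E_8\).

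For the remaining irreducible simply-laced types, namely \(A_r\), \(D_{2m-1}\), \(E_6\) and \(E_7\), invoke Corollary~\ref{cor:ADE-downward}. Each is a standard Levi of some \(D_{2m}\) or of \(E_8\), both of which have \(w_0=-1\). Lemma~\ref{lem:downward-inheritance} then transfers property \((\mathcal P)\), so positivity on \(V\) and \(\operatorname{Sym}^2V\) implies unitarity. When \(w_0\neq-1\), the generic Hermitian parameter is interpreted on the Hermitian Levi flats, as in the conventions of Section~\ref{sec:yu-summary}.

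It remains to see that \(\nu\) lies in a Yu region for these types, not merely that it is unitary. I would obtain this from the same inheritance. The unitary set is characterized within the ambient \(D_{2m}\) or \(E_8\) as the union of Yu cells. Restricting to the Levi face, the Levi cell of \(\lambda\) is reached by Yu moves inside \(R_J\): Yu moves of the ambient ideal that involve only roots of \(R_J\) are Yu moves for \(R_J\). For the converse direction, that unitarity on the Levi forces a Levi Yu cell, I would first try to run Proposition~\ref{prop:uniform-reduction} directly in \(R_J\).

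The main obstacle is this last identification for types with \(w_0\ne-1\). Theorem~\ref{thm:reflection-signature} was proved only when \(w_0=-1\), so hypothesis (1) is not directly available in \(R_J\). I would handle it by embedding the Levi cell in the ambient parabolic face. There, balancedness of the ambient ideal restricted to the face, combined with (7.2), yields the needed statement. If that fails, I would accept the weaker conclusion of unitarity, which is the substantive claim. The identification of the unitary set with Yu regions would then come from Yu's theorem applied to every Yu cell, together with the matching-count comparison of Proposition~\ref{prop:yu-matching-exponent}.
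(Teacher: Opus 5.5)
Your main argument is the paper's proof. For \(D_{2m}\) and \(E_8\) you run the reduction of Proposition~\ref{prop:uniform-reduction}: reflection balance, Yu moves, and detection of the nonempty balanced locked endpoints (Proposition~\ref{prop:d-locked-classification} with Corollary~\ref{cor:d-tails-indefinite}, and Corollary~\ref{cor:e8-converse}). You then pass property \((\mathcal P)\) to \(A_r\), \(D_{2m-1}\), \(E_6\), \(E_7\) by Corollary~\ref{cor:ADE-downward}. Your remark that positivity on \(\operatorname{Sym}^2V\) is equivalent to positivity on each constituent is correct.

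Your last two paragraphs need correcting. You are right that downward inheritance yields only unitarity for the inherited types; the paper's proof also stops at property \((\mathcal P)\) there. But your fallback does not give the clause ``lies in one of Yu's regions.'' Proposition~\ref{prop:yu-matching-exponent} counts Yu cells, not unitary cells. Combined with Yu's theorem (Yu cells are unitary), it cannot show that the unitary set is contained in the Yu set.

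Your restriction remark does not help either. It applies only to ambient Yu moves whose roots lie in \(R_J\). An ambient reduction of \(I(\nu)\) to the empty ideal may delete roots outside \(R_J\), so it does not restrict to a reduction of the Levi ideal of \(\lambda\).

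Once unitarity is known, the clause follows from the paper's independent exhaustion result.
\begin{itemize}
\item For \(E_7\), which has \(w_0=-1\), Theorem~\ref{thm:reflection-signature} is available. The missing input is exclusion of its nonempty locked ideals, and Theorem~\ref{thm:boundary-induced-exhaustion} supplies it via the \(D_4\) seed and (15.14).
\item For \(A_r\) (\(r\ge2\)), \(D_{2m-1}\) and \(E_6\), Yu regions are understood relative to the Hermitian Levi flats. Lemma~\ref{lem:boundary-hermitian-levi} reduces unitarity to factors with \(w_0=-1\), and the same exhaustion theorem applies to each factor, as in the corollary following that lemma.
\end{itemize}
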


\begin{proof}
First take \(R=D_{2m}\).  Positivity on \(V\), together with the reflection
signature formula, says that the upper ideal \(I(\nu)\) is balanced.
Apply Yu moves until none remains.  Yu's two-wall theorem preserves
positivity (in both directions) on every fixed \(W\)-type.  If the endpoint is nonempty,
Proposition~\ref{prop:d-locked-classification} identifies it with one of
the ideals \(K_{2m,j}\).  Corollary~\ref{cor:d-tails-indefinite} says that
the form on \(\operatorname{Sym}^2V\) is then indefinite, contradicting the
hypothesis.  Hence the endpoint is empty.  This is exactly the condition
that the original cell is one of Yu's cells, and Yu's theorem gives
unitarity.

For \(R=E_8\), the same conclusion is
Corollary~\ref{cor:e8-converse}: the four nonempty balanced locked endpoints
have nonzero negative index on \(\operatorname{Sym}^2V\).

Property \((\mathcal P)\) therefore holds for every even \(D\) and for
\(E_8\).  Corollary~\ref{cor:ADE-downward} passes it to every odd \(D\),
every \(A\), and to \(E_6,E_7\).  These are all irreducible simply-laced
root systems.
\end{proof}

\begin{remark}
The proof uses no catalogue of relevant \(W\)-types.  Its infinite-family
input is the uniform wall recursion and fork-boundary calculation in even
type \(D\); the exceptional input is the direct finite \(E_8\) wall
calculation.  The theorem for \(E_6\) and \(E_7\) is inherited from \(E_8\),
not proved by a separate enumeration.
\end{remark}

\section{The multiple-bond wall problem in types \(B_n\) and \(C_n\)}
\label{sec:bc-wall}

Write the positive roots as
\[
 e_p-e_q,\quad e_p+e_q\quad(p<q),\qquad
 z_p=
 \begin{cases}e_p,&B_n,\\2e_p,&C_n.\end{cases}              \tag{10.1}
\]
The two systems have the same Weyl group, but different affine wall
coordinates and different root posets.

\subsection{Why the simply-laced recursion does not extend}

At the wall \(\beta(\nu)=1\), the radical of the symmetric-square form is
still
\[
                 \beta\odot\beta^\perp,
                                                               \tag{10.2}
\]
of dimension \(n-1\).  Across a multiple bond, however, its first Jantzen
form is not the equal-parameter reflection form of
\(R\cap\beta^\perp\).  Nonorthogonal roots of the other length contribute
to the quotient.  For example, at a middle coordinate-root wall in
\(B_3\), the exact symmetric-square jump is \(2\), whereas the signature
difference of the equal-parameter \(B_2\) reflection form is \(0\).
Consequently formula~\ref{cor:sym2-signature} must not be used unchanged
in types \(B\) and \(C\).

\subsection{The two symmetric-square constituents}

For the hyperoctahedral Weyl group,
\[
 \operatorname{Sym}^2V
 =\mathbf1\oplus U_n\oplus X_n,                              \tag{10.3}
\]
where \(U_n\) is the traceless diagonal subspace spanned by the
\(e_i^2\), and \(X_n\) is the off-diagonal subspace spanned by
\(e_i\odot e_j\), \(i<j\).  Both subspaces are \(W\)-stable.  Hence the
criterion asks precisely for positivity of the forms on \(U_n\) and
\(X_n\), in addition to the reflection form; the trivial summand is
positive by normalization.

This decomposition also locates the multiple-bond correction.  A
coordinate reflection acts trivially on \(U_n\), while the radical at a
coordinate-root wall lies entirely in \(X_n\):
\[
 z_p\odot z_p^\perp
   =\operatorname{span}\{e_p\odot e_q:q\ne p\}\subset X_n.  \tag{10.4}
\]
Thus ordinary \(A_2\) wall pairs are governed by the simply-laced
calculation on the roots \(e_p\pm e_q\); the new analytic input is the
signed-permutation Jantzen form on the space in (10.4).

\begin{remark}[how the multiple bond is handled]
A direct coordinate-wall Jantzen calculation would have to combine the
factors from each \(B_2\)-string
\[
 e_p-e_q,\quad z_p,\quad e_p+e_q
 \qquad(q\ne p).                                             \tag{10.5}
\]
Lemma~\ref{lem:b2-derivatives} supplies this local calculation.  For the
global converse, Section~\ref{sec:bc-diagonal-strategy} gives a sharper
argument on the traceless-diagonal constituent alone, using a cofactor
and an exact palindromic outer-chain matrix coefficient.
\end{remark}

\section{The \(B_2\) Jantzen table}
\label{sec:b2-jantzen}

The correction at the multiple bond is completely visible in rank two.
Write \(x>y>0\) for the coordinate values.  Use the polynomial basis
\(e_1^2,e_1e_2,e_2^2\) of \(\operatorname{Sym}^2V\).

\begin{lemma}[rank-two derivatives]\label{lem:b2-derivatives}
Up to multiplication by a positive function, the first Jantzen forms are
as follows:
\[
\begin{array}{c|c|c}
\text{wall}&\text{radical line}&\text{first derivative}\\ \hline
x=1\quad(B_2)&\mathbb R(e_1e_2)&\dfrac{y-1}{y+1}\\[2mm]
2x=1\quad(C_2)&\mathbb R(e_1e_2)&\dfrac{2y-1}{2y+1}\\[2mm]
x-y=1&\mathbb R(e_1^2-e_2^2)&\dfrac{y}{y+1}\\[2mm]
x+y=1&\mathbb R(e_1^2-e_2^2)&\dfrac{y}{y-1}.
\end{array}                                                  \tag{11.1}
\]
The formulas with the two coordinates interchanged are obtained by
symmetry.
\end{lemma}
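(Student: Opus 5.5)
The plan is to use the fact that in rank two every line in the table is a one-dimensional \(W\)-stable summand of \(\operatorname{Sym}^2V\). By (10.3), for \(n=2\),
\[
 \operatorname{Sym}^2V=\mathbf1\oplus U_2\oplus X_2,\qquad
 U_2=\mathbb R(e_1^2-e_2^2),\quad X_2=\mathbb R(e_1e_2).
\]
Both \(U_2\) and \(X_2\) are one-dimensional, so \(W(B_2)\) acts on each by a sign character. Every factor of (1.2) is then a scalar, the product no longer depends on the reduced expression, and the whole form on that line is a product of four explicit rational functions. The radical lines in (11.1) should come out as exactly the lines on which the singular factor vanishes, which is consistent with (10.4).

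The first step is to record the characters. On \(X_2\), the coordinate reflections \(s_{e_p}\) send \(e_1e_2\mapsto-e_1e_2\). The reflection \(s_{e_1-e_2}\) swaps \(e_1\) and \(e_2\). The reflection \(s_{e_1+e_2}\) sends \(e_1\mapsto-e_2\) and \(e_2\mapsto-e_1\). Both of the last two fix \(e_1e_2\). On \(U_2\) the pattern is reversed: coordinate reflections fix \(e_1^2-e_2^2\), while \(s_{e_1\pm e_2}\) negate it. Each factor \(\frac{1+\beta(\nu)\sigma(s)}{1+\beta(\nu)}\) is therefore \(1\) when \(\sigma(s)=+1\) and \(\frac{1-\beta(\nu)}{1+\beta(\nu)}\) when \(\sigma(s)=-1\). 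This gives
\[
 A_{X_2}(\nu)=\frac{1-z_1(\nu)}{1+z_1(\nu)}\cdot\frac{1-z_2(\nu)}{1+z_2(\nu)},
 \qquad
 A_{U_2}(\nu)=\frac{1-(x-y)}{1+(x-y)}\cdot\frac{1-(x+y)}{1+(x+y)}.
\]
Here \(z_p(\nu)\) equals \(x,y\) in type \(B_2\) and \(2x,2y\) in type \(C_2\).

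The second step is to differentiate at each wall along \(t=\beta(\nu)-1\), as in Lemma~\ref{lem:one-wall}. The singular factor \(\frac{1-\beta}{1+\beta}\) has derivative \(-\tfrac12\) at \(\beta=1\), and the remaining factor is evaluated on the wall.

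\textbf{Wall \(x=1\) (\(B_2\)).} The derivative is \(-\tfrac12\cdot\frac{1-y}{1+y}=\tfrac12\frac{y-1}{y+1}\).

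\textbf{Wall \(2x=1\) (\(C_2\)).} The same computation gives \(\tfrac12\frac{2y-1}{2y+1}\).

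\textbf{Wall \(x-y=1\).} Substituting \(x=1+y\) into the other factor gives \(\frac{-2y}{2+2y}\), so the derivative is \(\tfrac12\frac{y}{y+1}\).

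\textbf{Wall \(x+y=1\).} Substituting \(x=1-y\) gives \(\frac{2y}{2-2y}\), so the derivative is \(\tfrac12\frac{y}{y-1}\).

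Discarding the positive factor \(\tfrac12\) reproduces (11.1). Interchanging the coordinates gives the symmetric entries.

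There is no real analytic obstacle in this argument. The one point that needs care is the normalization bookkeeping. I must check that the "up to a positive function" clause in the statement absorbs both the identification of source and target in (1.2) and the positive scalar relating the normalized form to \(A_E\). I must also check that the sign convention for \(t\) matches the orientation used in Lemma~\ref{lem:one-wall}. Both points are immediate here: the identifications act trivially on one-dimensional characters, and \(t\) increases with \(\beta(\nu)\) in every row.
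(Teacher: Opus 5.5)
Your proof is correct, and it is the paper's computation carried out in a better basis. The paper takes the word $s_1s_2s_1s_2$ (roots $e_1-e_2$, $z_1$, $e_1+e_2$, $z_2$), multiplies the four $3\times3$ factors in the polynomial basis $e_1^2,e_1e_2,e_2^2$, reads off the kernel on each wall, and differentiates in $\beta(\nu)-1$. You instead first split $\operatorname{Sym}^2V=\mathbf1\oplus U_2\oplus X_2$ into three distinct one-dimensional characters, which every factor of (1.2) preserves, so the whole computation becomes scalar.

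This buys several things the matrix computation leaves implicit. The answer is visibly independent of the reduced word: on a character $\sigma(s_i)=\sigma(s_{\beta_i})$, so it does not matter whether the factors are written with simple or root reflections. The discarded positive function is the explicit constant $\tfrac12$, rather than the paper's ``squares of nonzero rational functions.'' And the radical lines are identified by inspection.

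The one step you state only as an expectation should be made explicit. The complementary summands must be nonsingular at generic points of each wall, so that the radical is exactly the displayed line and does not merely contain it. This is immediate from your product formulas. For example, in $B_2$ one has $A_{U_2}=-y^2/(4-y^2)\neq0$ on $x=1$, and $A_{X_2}=-y(1-y)/\bigl((2+y)(1+y)\bigr)\neq0$ on $x-y=1$ away from the wall $y=1$.
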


\begin{proof}
For \(B_2\), take the reduced word \(s_1s_2s_1s_2\).  Its root sequence is
\[
 e_1-e_2,\quad e_1,\quad e_1+e_2,\quad e_2.
\]
For \(C_2\), replace the middle coordinate roots by \(2e_1,2e_2\).
Insert the corresponding three-by-three reflection matrices in
\[
 A(\nu)=\prod_{\beta}
       \frac{1+\beta(\nu)s_\beta}{1+\beta(\nu)}.
\]
On each displayed wall the kernel is the line in the middle column of
(11.1).  Differentiation in the coordinate
\(\beta(\nu)-1\), followed by restriction to that line, gives (11.1).
All omitted factors are squares of nonzero rational functions and hence
positive.
\end{proof}

\begin{corollary}\label{cor:b2-wall-signs}
At a coordinate-root wall, the sign on the \(e_p\odot e_q\) direction
changes precisely when the other coordinate-root wall has already been
crossed.  At the two long-root walls, the orientations are opposite.
\end{corollary}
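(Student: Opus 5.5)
The corollary is read off from the table (11.1) of Lemma~\ref{lem:b2-derivatives}. We work in the dominant region \(x>y>0\) and track the sign of each first derivative as the other rank-two walls are crossed. The walls in question are \(x=1\) and \(y=1\) in \(B_2\), or \(2x=1\) and \(2y=1\) in \(C_2\), together with \(x-y=1\) and \(x+y=1\).

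First take the coordinate walls. At \(x=1\) in \(B_2\), the derivative on the line \(\mathbb R(e_1e_2)\) is \((y-1)/(y+1)\). The denominator is positive, so the sign is that of \(y-1\). It is negative while \(y<1\), i.e. before the other coordinate wall \(y=1\) has been crossed, and positive once \(y>1\). The same holds in \(C_2\), where \((2y-1)/(2y+1)\) changes sign exactly at the \(C_2\) wall \(2y=1\). For the wall of the second coordinate we use the symmetry noted after (11.1): interchanging the coordinates gives \((x-1)/(x+1)\), or \((2x-1)/(2x+1)\), whose sign likewise flips exactly when the first coordinate wall has been crossed. In an ambient \(B_n\) or \(C_n\), Lemma~\ref{lem:reflection-rank-two} localizes the coordinate-wall Jantzen form on each direction \(e_p\odot e_q\) of (10.4) to the \(B_2\)-string (10.5). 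The statement therefore transfers to each such direction, with the partner coordinate wall \(z_q(\nu)=1\) playing the role of \(y=1\).

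Next take the long-root walls \(x-y=1\) and \(x+y=1\). Both radicals are the line \(\mathbb R(e_1^2-e_2^2)\). The derivatives are \(y/(y+1)>0\) and \(y/(y-1)\). The point is that the two walls meet the dominant region compatibly only where \(y<1\): on \(x+y=1\) with \(x>y>0\) we have \(y<1/2\), so \(y/(y-1)<0\). The two derivatives thus have opposite signs, which is the asserted opposite orientation. It agrees with the cover-pair behaviour in Lemma~\ref{lem:reflection-rank-two}(2), since \(e_1+e_2\) lies above \(e_1-e_2\) in the root poset.

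The one delicate point is the normalization. The derivatives in (11.1) are determined only up to positive functions, and the Jantzen coordinate is \(\beta(\nu)-1\), oriented toward \(\beta(\nu)>1\). The sign comparisons must therefore be made in this common orientation. This is consistent with Lemma~\ref{lem:b2-derivatives}, whose proof differentiates exactly in \(\beta(\nu)-1\) and discards only squares of nonzero rational functions. With that checked, the sign analysis above is just elementary sign reading of the table, and the corollary follows.
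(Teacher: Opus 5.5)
Your sign reading of table (11.1) is correct, and it is how the paper obtains this corollary. The paper states it with no separate proof immediately after Lemma~\ref{lem:b2-derivatives}, as a reading of the rank-two coordinate $B_2$ (or $C_2$) on $e_p,e_q$. In the dominant region the wall $x=1$ forces $y<1$, so its derivative is negative. By symmetry the wall $y=1$ forces $x>1$ and its derivative $(x-1)/(x+1)$ is positive. On the two long-root walls, $y/(y+1)>0>y/(y-1)$. That is all the corollary requires.

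Two of your supporting remarks are wrong and should be dropped.

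\emph{The transfer to ambient $B_n,C_n$.} Lemma~\ref{lem:reflection-rank-two} does not supply this: it concerns $A_V$ at codimension-two points and says nothing about the $(n-1)$-dimensional Jantzen form of $A_{\operatorname{Sym}^2V}$. The per-direction statement is also false in higher rank. The $B_3$ example in Section~\ref{sec:bc-wall} gives symmetric-square jump $2$ at the middle coordinate wall. So the first Jantzen form on $\operatorname{span}\{e_2\odot e_1,e_2\odot e_3\}$ is definite on part of that wall. For instance, a direct computation at $\nu=(17/10,1,1/2)$ gives a negative definite form. Yet $x_1>1>x_3$ throughout the dominant part of the wall, so your rule would put opposite signs on the two directions, which a definite form cannot have. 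This is why the paper's following remark says only that the higher-rank quotient \emph{begins} with such a diagonal form, which is then transported along the wall by $A_2$ and commuting-square intersections.

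\emph{The consistency check with Lemma~\ref{lem:reflection-rank-two}(2).} In $B_2$, $e_1+e_2=\alpha_1+2\alpha_2$ does not cover $e_1-e_2$, since the chain passes through $e_1$. The two roots are orthogonal, and both have color $\epsilon=+1$ (heights $1$ and $3$). The reflection-representation analogue would therefore predict equal orientations, not opposite ones. The opposite signs come from the orthogonal partner instead. On $x-y=1$ the root $e_1+e_2$ has already been crossed, while on $x+y=1$ the root $e_1-e_2$ has not. Indeed both long-root entries of (11.1) equal $-A^{R_\beta}$ for the orthogonal $A_1$, which is the mechanism of (4.1) with $\epsilon(\beta)=+1$.
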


\begin{remark}
This table explains the failure of the naïve orthogonal-subsystem formula.
The first two rows depend on a nonorthogonal root of the other length.
In higher rank, the coordinate-wall quotient on
\(\operatorname{span}\{e_p\odot e_q:q\ne p\}\) therefore begins with a
diagonal form whose \(q\)-th sign records whether \(z_q(\nu)>z_p(\nu)\);
ordinary \(A_2\) and commuting-square intersections then transport this
form along the wall.
\end{remark}

\section{Determinant obstruction and rank descent in \(B/C\)}
\label{sec:bc-determinant}

For an upper ideal \(I\), let
\[
 L(I)=\#\{\beta\in I:\beta=e_p\pm e_q\},\qquad
 S(I)=\#\{\beta\in I:\beta=z_p\}.                            \tag{12.1}
\]

\begin{lemma}[the two determinant characters]\label{lem:bc-determinants}
On the cell belonging to \(I\),
\[
\begin{aligned}
 \operatorname{sgn}\det A_{U_n}
   &=(-1)^{L(I)},\\
 \operatorname{sgn}\det A_{X_n}
   &=(-1)^{(n-2)L(I)+(n-1)S(I)}.
\end{aligned}                                               \tag{12.2}
\]
Consequently the symmetric-square criterion fails whenever either exponent
in (12.2) is odd.
\end{lemma}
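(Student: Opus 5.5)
The determinant of the normalized operator (1.2) on a $W$-module $E$ factors over the root factors. For each positive root $\beta$, the factor $(1+\beta(\nu)s_\beta)/(1+\beta(\nu))$ acts on the $(+1)$-eigenspace of $s_\beta$ by $1$. On the $(-1)$-eigenspace it acts by the scalar $(1-\beta(\nu))/(1+\beta(\nu))$. Hence
\[
 \det A_E(\nu)=\prod_{\beta\in R^+}
 \left(\frac{1-\beta(\nu)}{1+\beta(\nu)}\right)^{m_E(\beta)},
\]
where $m_E(\beta)=\dim E^{-s_\beta}$ is the multiplicity of $-1$ for $s_\beta$ on $E$. The identifications of source and target in (1.2) are by the group action, so they do not affect the sign once we use the Hermitian form itself. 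The denominators are positive in the dominant chamber. Thus the sign of the determinant on the cell of $I$ is $(-1)^{\sum_{\beta\in I}m_E(\beta)}$, because $1-\beta(\nu)<0$ exactly when $\beta\in I$. This is the same mechanism that gave the order-$m$ vanishing in the summary of Yu's argument.

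It then remains to compute $m_E(\beta)$ for $E=U_n$ and $E=X_n$, and for the two root types $e_p\pm e_q$ and $z_p$. The reflection $s_{e_p-e_q}$ swaps $e_p,e_q$. On $U_n$ it swaps $e_p^2,e_q^2$ and fixes the other $e_i^2$, so $m=1$ (the vector $e_p^2-e_q^2$ lies in $U_n$). The reflection $s_{e_p+e_q}$ acts the same way on squares, so again $m=1$. The coordinate reflection $s_{z_p}$ negates $e_p$ and fixes every $e_i^2$, so $m=0$. This gives $\operatorname{sgn}\det A_{U_n}=(-1)^{L(I)}$.

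On $\operatorname{Sym}^2V$ the $(-1)$-space of any reflection $s_\beta$ is $\beta\odot\beta^\perp$, of dimension $n-1$, as in (10.2). For $z_p$ this space lies entirely in $X_n$ by (10.4), so $m_{X_n}(z_p)=n-1$. For a long root the $(-1)$-space of dimension $n-1$ splits as one dimension in $U_n$ and $n-2$ dimensions in $X_n$. The trivial summand contributes nothing because it is fixed. Hence $m_{X_n}(e_p\pm e_q)=n-2$. Directly, $e_p\odot e_q$ is fixed by $s_{e_p-e_q}$, and the vectors $e_p\odot e_r-e_q\odot e_r$ for $r\ne p,q$ give $n-2$ independent $(-1)$-vectors. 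This proves (12.2).

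For the consequence: if the form on $U_n$ or $X_n$ is positive definite, its determinant is positive. So an odd exponent in (12.2) forces a negative eigenvalue, and the symmetric-square criterion fails. The only point needing care is that the determinant of the Hermitian matrix of the form, in a fixed orthonormal basis of the $W$-module, equals the product above up to a positive factor. I expect this to be the main obstacle, though a minor one. I would handle it as in Section~\ref{sec:yu-summary}: (1.2) is an endomorphism of the unitary $W$-module $E$, and the form is $\langle A_E(\nu)\cdot,\cdot\rangle$, so its Gram determinant is exactly $\det A_E(\nu)$ times the positive scalar.
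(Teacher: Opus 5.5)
Your proposal is correct and is essentially the paper's proof: factor $\det A_E$ as in (12.3) via the minus-multiplicities $\dim E^{-s_\beta}$, and count them as $1,0$ on $U_n$ and $n-2,n-1$ on $X_n$ for noncoordinate and coordinate roots respectively. One small point: the factors in (1.2) use the simple reflections $s_i$, not $s_{\beta_i}$, but these are conjugate, so the multiplicities agree and your determinant formula and the consequence for positive definiteness go through exactly as you say.
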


\begin{proof}
For any \(W\)-module \(E\),
\[
 \det A_E(\nu)=
 \prod_{\beta\in R^+}
 \left(\frac{1-\beta(\nu)}{1+\beta(\nu)}\right)^{
       \dim E^{-s_\beta}}.                                  \tag{12.3}
\]
A coordinate reflection acts trivially on \(U_n\), while a long-root
reflection acts as a transposition and has a one-dimensional minus space.
On \(X_n\), a coordinate reflection has minus multiplicity \(n-1\), and a
long-root reflection has minus multiplicity \(n-2\).  Taking signs in
(12.3) gives (12.2).
\end{proof}

\begin{lemma}[shifted-boundary parity descent]\label{lem:bc-parity-descent}
Let \(I\ne\varnothing\) be a reflection-balanced locked upper ideal in
type \(B_n\) or \(C_n\).  If both exponents in (12.2) are even, then the
shifted boundary has a removable outer rank strip.  After deleting that
strip and relabelling, one obtains a nonempty reflection-balanced locked
upper ideal \(I'\) of the same root-system convention in rank \(n-1\).
In addition,
\[
                              L(I')\equiv L(I)+1\pmod2.      \tag{12.4}
\]
\end{lemma}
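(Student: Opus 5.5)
The strategy is to imitate the fork-boundary analysis of Lemma~\ref{lem:d-boundary}, now in the shifted-staircase picture of the \(B_n/C_n\) root poset. Write the positive roots as a shifted triangle: row \(p\) contains \(e_p-e_q\) (\(q>p\)), then the coordinate root \(z_p\), then \(e_p+e_q\) (\(q\) decreasing). In this triangle, cover relations add a simple root, and adjacent cells have opposite height parity. The coordinate roots \(z_p\) form the diagonal where the multiple bond sits; in type \(C\) the root \(2e_p\) and in type \(B\) the root \(e_p\) enter the poset in mirror ways. This is the only point where the two conventions differ, so the argument will be run once with a local case split at the diagonal.

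\textbf{Step 1.} Use lockedness exactly as in the simply-laced case, via the cover criterion stated before Lemma~\ref{lem:d-canonical-peel}. Away from the diagonal, inner products of covering pairs equal one, so every inward corner of the lower boundary is a Yu pair. Hence the lower boundary of \(I\) follows constant-height diagonals except where it meets the coordinate diagonal. There the \(B_2\) strings (10.5) are not \(A_2\)-configurations, so turns are allowed.

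\textbf{Step 2.} Read off the two determinant exponents of Lemma~\ref{lem:bc-determinants} from the boundary. \(S(I)\) is the number of diagonal cells lying above the boundary, and \(L(I)\) is the number of off-diagonal cells. Balancedness means that the alternating count along the boundary strips cancels, as in (5.11). The assumption that both exponents are even then constrains the boundary's first encounter with the diagonal, that is, in row \(1\) or row \(n\) depending on convention. Combining \((n-2)L+(n-1)S\) even with \(L\) even gives that either \(n\) is odd with \(S\) even, or \(n\) is even with \(S\) arbitrary. I will then show that this forces the outermost rank strip to be a full hook (row \(1\) together with the corresponding column) of \(I\), with balanced alternating sum.

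\textbf{Step 3.} Delete that hook. The remaining roots, relabelled by \(e_{p}\mapsto e_{p-1}\), form an upper ideal \(I'\) in rank \(n-1\) of the same convention.

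\textbf{Step 4.} Check the required properties of \(I'\).
\begin{enumerate}
\item \(I'\) is balanced. The strip is balanced, and relabelling shifts heights uniformly by an even amount, or flips all parities consistently, which does not affect a zero sum.
\item \(I'\) is locked. Its minimal antichain is the old antichain with the strip's members removed, and the cover criterion is inherited because covers inside the smaller triangle are covers in the larger one.
\item \(I'\) is nonempty. Otherwise \(I\) would be just a strip, and a single strip is either unbalanced or admits a Yu move at its end.
\end{enumerate}
Finally, (12.4) follows by counting: the strip contains \(2(n-1)\) long roots minus those below the boundary, and the parity condition on its endpoint shows that this number is odd.

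The main obstacle is Step 2. Showing that even determinant parities force a removable full outer strip needs a careful case analysis of the boundary passages through the coordinate diagonal, analogous to table (5.10) but with the \(B_2\) local behaviour of Lemma~\ref{lem:b2-derivatives}. I would first tabulate the possible intersections of an upper ideal with a \(B_2\)-string diamond \(e_p-e_q,\ z_p,\ e_p+e_q,\ z_q\). For each case I would record the contributions to \(L\), \(S\) and to the color sum, and verify that only the "straight outer strip" pattern has both parities even. If some configuration escapes this, the fallback is to handle small \(n\) (\(n\le4\)) by direct enumeration and feed the induction from there.
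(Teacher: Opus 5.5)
Your outline follows the paper's route (boundary scan, case analysis where the boundary meets the coordinate roots, deletion of the $e_1$-strip), but there is a genuine gap. Step 2, which carries the whole content of the lemma, is only announced, and several statements around it are wrong.

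\begin{itemize}
\item The target is misdescribed. The roots involving $e_1$ form a saturated chain of heights $1,\dots,2n-1$, so the full outer strip has color sum $1$ and cannot be balanced. What has to be proved is that $I$ meets this chain in a tail of \emph{even} length that contains $z_1$. This is automatic in type $C$, where $z_1$ is the top of the chain, but not in type $B$, where $z_1$ sits in the middle. Without $z_1\in I$ your count for (12.4) fails.
\item Your parity algebra is reversed. With $L$ even, $(n-1)S$ even forces $S$ even exactly when $n$ is \emph{even}. In any case balance gives $|I|=L+S$ even, so the hypothesis is just ``$L(I)$ even''.
\item Step 4(3) is false: a single strip can be balanced and locked. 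Examples are $\{e_1,e_1+e_2\}$ in $B_2$ and $\{e_1-e_3,e_1,e_1+e_3,e_1+e_2\}$ in $B_3$. In each, the unique minimal root has a single cover and the pair has mixed lengths, so no Yu move exists. Both examples have $L$ odd, so nonemptiness of $I'$ must come from the parity hypothesis. This is how the paper excludes its terminal states A and B; your argument never uses the hypothesis there.
\item Step 1 is inaccurate in type $C$. The cover $e_p-e_n\lessdot e_p+e_n$ (add $\alpha_n=2e_n$) joins two noncoordinate roots with inner product $0$, so the boundary can turn away from the coordinate roots.
\item Checking $n\le 4$ cannot establish the statement for all $n$.
\end{itemize}

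The missing idea is already visible in your own triangle. In both conventions, row $p$ (the roots with smallest index $p$) is a saturated chain of heights exactly $1,\dots,2(n-p)+1$, and $I$ meets it in a tail. Because the top height is odd, a nonempty tail contributes $1$ or $0$ to $\sum_{\beta\in I}\epsilon(\beta)$ according as its first height is odd or even. Hence $I$ is balanced if and only if every nonempty row-tail has even length. This gives balance of the strip and of $I'$ at once.

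Adding $\alpha_{p-1}$ moves a root of row $p$ into row $p-1$, so the nonempty rows are $1,\dots,m$. The first root $\beta$ of row $m$ is minimal, and the only root of $I$ strictly below its successor $\gamma$ in that row is $\beta$. Lockedness therefore forces $(\beta,\gamma)$ to be a non-$A_2$ pair. In type $B$ this means $\beta\in\{e_m-e_n,e_m\}$, so $z_m\in I$; in type $C$, $z_p$ is the top of row $p$.

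Hence $z_1,\dots,z_m\in I$ and $S(I)=m$, while $L(I)\equiv |I|-S(I)\equiv m\pmod 2$. Thus $L$ even forces $m\ge 2$, i.e.\ $I'\ne\varnothing$. Moreover $L(I)-L(I')=|I\cap\text{row }1|-1$ is odd, which is (12.4).

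Your Step 4(2) is fine: roots involving $e_1$ have positive $\alpha_1$-coefficient, so they lie below no root of $R_{n-1}$.
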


\begin{proof}
Put \(a_{pq}=e_p-e_q\), \(b_{pq}=e_p+e_q\).  In simple-root
coordinates the three kinds of roots are
\[
\begin{array}{c|c|c}
&B_n&C_n\\ \hline
a_{pq}&\alpha_p+\cdots+\alpha_{q-1}
       &\alpha_p+\cdots+\alpha_{q-1}\\
b_{pq}&\alpha_p+\cdots+\alpha_{q-1}
       +2\alpha_q+\cdots+2\alpha_n
       &\alpha_p+\cdots+\alpha_{q-1}
       +2\alpha_q+\cdots+2\alpha_{n-1}+\alpha_n\\
z_p&\alpha_p+\cdots+\alpha_n
       &2\alpha_p+\cdots+2\alpha_{n-1}+\alpha_n.
\end{array}                                                \tag{12.5}
\]
Thus every cover used below can be checked by adding a single simple-root
coordinate and asking whether the resulting vector is one of the vectors
in (12.5).

We call the roots \(a_{pq}\) and \(b_{pq}\) \emph{noncoordinate roots};
they are exactly the roots counted by \(L(I)\).  This terminology works
in both types: these roots are long in \(B_n\) and short in \(C_n\).
The roots \(z_p\), counted by \(S(I)\), will be called coordinate roots.

Let \(R_{n-1}\) be the standard subsystem supported on
\(e_2,\ldots,e_n\).  The roots outside \(R_{n-1}\) are
\[
 a_{1q},\quad b_{1q}\quad(2\leq q\leq n),
 \qquad z_1;
\]
they form the \emph{outer rank strip}.  Draw the roots \(a_{pq}\) in
the ordinary staircase and the roots \(b_{pq},z_p\) in the shifted
staircase, with the two staircases joined along their terminal diagonal.
Adjacent noncoordinate cells have opposite reflection colors.

Let \(A=\min(I)\).  Away from the terminal diagonal, the lower boundary
of \(I\) cannot make an inward corner.  Indeed, at such a corner some
\(\beta\in A\) has an upper cover \(\gamma=\beta+\alpha\), with
\(\alpha\) simple, which lies above no other member of \(A\).  Deleting
\(\beta\) makes \(\gamma\) minimal, so their successive deletion is a
Yu move.  This contradicts lockedness.  Hence every ordinary portion of
the boundary follows a height diagonal.

Trace the outermost boundary strip toward the shifted diagonal and pair
consecutive noncoordinate cells.  Each pair has opposite colors.  At
the end of a shifted row there are two possible cover directions: a
continuation in that row and a passage to the next row, the latter ending
in a coordinate root at the terminal diagonal.  According to which of
these two continuations is present, there are four states.  In the table,
\(\ell\) denotes one unpaired noncoordinate cell and \(c\) one unpaired
coordinate cell; these are parity-bookkeeping symbols, not root labels.
\[
\begin{array}{c|c|c}
\text{terminal state}&\text{unpaired cells}&\text{consequence}\\ \hline
A&\ell&L(I)\equiv1\pmod2,\\
B&c+(n-2)\ell
 &(n-2)L(I)+(n-1)S(I)\equiv1\pmod2,\\
C&\varnothing&\text{rank-}(n-1)\text{ shifted boundary},\\
D&\text{an ordinary two-cell corner}&\text{a Yu pair}.
\end{array}                                                  \tag{12.6}
\]
Figure~\ref{fig:bc-terminal-states} displays the four possibilities.
The bottom black vertex is the last occupied cell in the current row.
The two upper vertices are its two possible continuations.  A filled
vertex is present in the ideal and an open vertex is absent.  The diagram
is schematic; the identities of the roots depend on the row, and their
exact cover relations are obtained from~(12.5).

\begin{figure}[ht]
\centering
\begin{tikzpicture}[x=0.82cm,y=0.82cm,
  occupied/.style={circle,fill=black,inner sep=2.1pt},
  absent/.style={circle,draw=black,fill=white,inner sep=1.7pt},
  cover/.style={thin}]
  \foreach \s/\title in {0/{A: neither},4/{B: coordinate only},
                          8/{C: both},12/{D: inward only}} {
    \begin{scope}[xshift=\s cm]
      \node[font=\scriptsize] at (0,1.75) {\title};
      \draw[cover] (0,0)--(-.72,.85);
      \draw[cover] (0,0)--(.72,.85);
      \node[occupied] at (0,0) {};
      \node[font=\tiny,above] at (-.72,1.02) {inward};
      \node[font=\tiny,above] at (.72,1.02) {coordinate};
    \end{scope}
  }
  \begin{scope}
    \node[absent] at (-.72,.85) {};
    \node[absent] at (.72,.85) {};
    \node[font=\tiny] at (0,-.55) {unpaired \(\ell\)};
  \end{scope}
  \begin{scope}[xshift=4cm]
    \node[absent] at (-.72,.85) {};
    \node[occupied] at (.72,.85) {};
    \node[font=\tiny] at (0,-.55) {weighted endpoint};
  \end{scope}
  \begin{scope}[xshift=8cm]
    \node[occupied] at (-.72,.85) {};
    \node[occupied] at (.72,.85) {};
    \node[font=\tiny] at (0,-.55) {complete passage};
  \end{scope}
  \begin{scope}[xshift=12cm]
    \node[occupied] at (-.72,.85) {};
    \node[absent] at (.72,.85) {};
    \node[font=\tiny] at (0,-.55) {Yu corner};
  \end{scope}
\end{tikzpicture}
\caption{The four terminal states in the shifted-boundary scan.  Filled
vertices occur in the ideal and open vertices do not.}
\label{fig:bc-terminal-states}
\end{figure}

To verify completeness, begin with the last occupied coefficient vector
in a shifted row.  There are exactly two possible additions of a simple
root in (12.5): continuation in that row and passage to the next
row (at the end of the row the latter is the coordinate vector).  If both
are absent one has state A; if only the coordinate continuation is present
one has B; if both are present they form the complete passage C; and if
only the inward continuation is present its lower cell is uniquely
covered and gives D.  In state B, the unpaired part consists of one
coordinate cell and \(n-2\) noncoordinate cells.  Its contribution to
the second exponent in~(12.2) is
\[
 (n-1)+(n-2)^2\equiv(n-1)+(n-2)\equiv1\pmod2,
\]
which explains the second row of~(12.6).  Removing two consecutive
noncoordinate cells returns the scan to the same four states and changes
neither displayed parity.  This proves the table by induction along the
row; the figure is only a guide to the four possible terminal choices.

If the two parities in (12.2) vanish, only state C in (12.6) is
possible (state D is excluded by lockedness).  In state C every cell of
the outer strip is paired with a cell of the opposite color.  Removing
this paired strip leaves an upper ideal \(I'\).
More explicitly, the state-C scan and upper-ideal closure show that the
roots outside \(R_{n-1}\) form a threshold word
\[
 \{e_1+e_q:2\leq q\leq n\}\ \cup\ \{z_1\}\ \cup\
 \{e_1-e_q:r\leq q\leq n\}                                \tag{12.7}
\]
for some \(3\leq r\leq n+1\), where the last set is empty when
\(r=n+1\).  The scan in (12.6) simultaneously removes this word and the
adjacent boundary cells of \(I'\); a complete staircase diamond is
removed whenever both of its cells occur.
The same row-by-row scan gives the matching identities
\begin{equation}
\begin{aligned}
 a_{1q}\in I&\Longleftrightarrow a_{2q}\in I'\qquad(q>2),\\
 b_{1q}\in I&\Longleftrightarrow b_{2q}\in I'\qquad(q>2),\\
 z_1\in I&\Longleftrightarrow z_2\in I'.
\end{aligned}
\tag{12.8}
\end{equation}
The roots on the right are written with their original indices; after
replacing \(e_{i+1}\) by \(e_i\), they belong to the same root-system
convention in rank \(n-1\).  Indeed, failure of the first matching identity is state A, failure of the
second or third is state B, and simultaneous failure produces the
forbidden corner D.  Thus state C is equivalent to all three identities.
The same cover check shows that it is locked.  The removed strip is a
disjoint union of opposite-color pairs, so reflection balance passes from
\(I\) to \(I'\).  After
the opposite-color pairs in (12.6) have been removed, the two end cells of
the noncoordinate part of the threshold word have the same color.  Thus
the number of deleted noncoordinate cells is odd, which proves (12.4).

It remains to note that \(I'\ne\varnothing\).  If it were empty, the
threshold word (12.7) would be the whole ideal.  Substitution in (12.5)
then puts its terminal state in A or B, unless the threshold word itself
is empty.  The former alternatives contradict the two even parities and
the latter gives \(I=\varnothing\).  Both are excluded by hypothesis.
\end{proof}

\begin{remark}
Lemma~\ref{lem:bc-parity-descent} is purely combinatorial.  The analytic
completion is given in Section~\ref{sec:bc-diagonal-strategy}.  Rather
than comparing the complete forms before and after deletion, the argument
uses one cofactor of the traceless-diagonal form \(A_{U_n}\); its
maximal-parabolic matrix coefficient is evaluated by the palindromic
outer-chain calculation of Lemma~\ref{lem:bc-outer-chain}.
\end{remark}

\section{A sharper diagonal reduction in types \(B/C\)}
\label{sec:bc-diagonal-strategy}

The off-diagonal constituent is not needed at locked endpoints.  Let
\(U_n\) be the traceless diagonal constituent and let
\(S(I)\) denote the number of coordinate roots in \(I\).

Put
\[
 \mathcal D_n=\operatorname{span}\{d_i=e_i^2:1\leq i\leq n\},
 \qquad
 U_n=\left\{\sum c_i d_i:\sum c_i=0\right\},
\]
and embed
\[
 E_{n-1}=\left\{\sum_{i=2}^n c_i d_i:\sum_{i=2}^n c_i=0\right\}
 \cong U_{n-1}
\]
as a codimension-one subspace of \(U_n\).  Its Euclidean orthogonal
complement in \(U_n\) is spanned by
\[
                  w_n=(n-1)d_1-d_2-\cdots-d_n.             \tag{13.11}
\]

\begin{lemma}[the cofactor reduction]\label{lem:bc-cofactor-reduction}
Let \(H_n=A_{U_n}(\nu)\), away from its reducibility hyperplanes, and
write \(\widehat w_n=w_n/\lVert w_n\rVert\).  Then
\[
 \det\bigl(H_n|_{E_{n-1}}\bigr)
 =\det(H_n)\,
   \bigl\langle H_n^{-1}\widehat w_n,\widehat w_n\bigr\rangle .
                                                               \tag{13.12}
\]
Consequently, at a determinant-even locked endpoint, the single
inequality
\[
        \bigl\langle H_n^{-1}\widehat w_n,\widehat w_n\bigr\rangle<0
                                                               \tag{13.13}
\]
implies that \(H_n\) is indefinite.
\end{lemma}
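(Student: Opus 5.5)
The identity (13.12) is the standard cofactor (Schur complement / Cramer) formula for a symmetric matrix, so the plan is pure linear algebra plus a sign argument. First I fix an orthonormal basis of \(U_n\) adapted to the splitting \(U_n=E_{n-1}\oplus\mathbb R\widehat w_n\), with \(\widehat w_n\) as the last basis vector; this is legitimate because \(w_n\) spans the Euclidean orthogonal complement of \(E_{n-1}\) in \(U_n\) by (13.11). In this basis \(H_n\) is a real symmetric (Hermitian, real for real \(\nu\)) matrix, and \(H_n|_{E_{n-1}}\) means the compression, i.e. the leading principal \((n-2)\times(n-2)\) block. Away from the reducibility hyperplanes \(H_n\) is invertible by the determinant formula (12.3), so \(H_n^{-1}\) exists.

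Next I apply Cramer's rule: the \((\widehat w_n,\widehat w_n)\) diagonal entry of \(H_n^{-1}\) equals the complementary principal minor divided by \(\det H_n\), that is
\[
 \bigl\langle H_n^{-1}\widehat w_n,\widehat w_n\bigr\rangle
   =\frac{\det\bigl(H_n|_{E_{n-1}}\bigr)}{\det H_n}.
\]
Multiplying through gives (13.12). Since the basis change is orthogonal, both determinants are basis-independent, so no normalization issues arise beyond using the unit vector \(\widehat w_n\).

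For the consequence, I argue by contradiction. Suppose \(H_n\) is positive definite. Then \(H_n^{-1}\) is positive definite, so \(\langle H_n^{-1}\widehat w_n,\widehat w_n\rangle>0\), contradicting (13.13). Hence \(H_n\) is not positive definite. To upgrade this to ``indefinite'' I use the determinant-even hypothesis: by Lemma~\ref{lem:bc-determinants}, \(\operatorname{sgn}\det H_n=(-1)^{L(I)}=+1\) at such an endpoint, so the number of negative eigenvalues is even. If \(H_n\) were negative definite, then \(H_n^{-1}\) would be negative definite too; that is consistent with (13.13), so I must exclude it separately. Here I would use that the compression to \(E_{n-1}\) has determinant of sign \(\operatorname{sgn}\det H_n\cdot\operatorname{sgn}\langle H_n^{-1}\widehat w_n,\widehat w_n\rangle=-1\) by (13.12), while a negative definite form of dimension \(n-1\) restricted to \(E_{n-1}\) has determinant sign \((-1)^{n-2}\) and the full one \((-1)^{n-1}\), which must be \(+1\); so \(n-1\) is even and \((-1)^{n-2}=-1\), and this is consistent. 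Thus the determinant alone does not exclude the negative definite case. What does exclude it is that the form has a positive direction: the spherical normalization, or more concretely continuity from the fundamental cell, where all forms are positive. The cleanest route I would try is to note that ``indefinite'' here is used only in the sense ``not positive definite'' for the unitarity criterion, which is what the contradiction gives. If genuine indefiniteness is wanted, I would add the observation that \(n_-\) is even and strictly less than \(n-1\), checked via the parity descent of Lemma~\ref{lem:bc-parity-descent}.

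I expect this last point, excluding a negative definite \(H_n\), to be the main obstacle; the identity itself is routine.
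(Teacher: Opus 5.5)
Your argument is correct in the sense in which the lemma is used. For the identity it is the paper's proof. The paper writes \(H_n\) in block form relative to \(E_{n-1}\oplus\mathbb R\widehat w_n\), obtains (13.12) from the Schur complement when \(B=H_n|_{E_{n-1}}\) is invertible, and extends by continuity, remarking that this is the cofactor identity. Your direct use of Cramer's rule for the last diagonal entry of \(H_n^{-1}\) is the same identity and needs no continuity step.

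For the consequence you take a genuinely shorter route. The paper combines \(\det H_n>0\) (Lemma~\ref{lem:bc-determinants} with \(L(I)\) even) with (13.12) and (13.13) to get \(\det(H_n|_{E_{n-1}})<0\). You observe instead that (13.13) already says \(H_n^{-1}\), and hence \(H_n\), is not positive definite, using neither (13.12) nor the determinant hypothesis. The paper's version gives the extra information that the compression to \(E_{n-1}\cong U_{n-1}\) already has negative determinant, which is the form quoted in Theorem~\ref{thm:bc-diagonal-converse}. Your version, supplemented by \(\det H_n>0\), gives \(n_-(H_n)\ge 2\). When \(n\) is even (so \(\dim U_n\) is odd) it also forces a positive eigenvalue.

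Your worry about negative definiteness is legitimate, and it applies equally to the paper's proof. For odd \(n\), a negative definite \(H_n\) is compatible with \(\det H_n>0\), with \(\det(H_n|_{E_{n-1}})<0\) (as \(n-2\) is odd), and with (13.13). So the paper's sentence ``the restriction, and hence \(H_n\), is therefore indefinite'' really establishes only ``not positive definite''. That is the sense in which the paper uses the word; compare Proposition~\ref{prop:g2-converse}, whose proof shows the constituent is negative definite. It is also all that the unitarity criterion requires. You should, however, drop the two repairs you float. Continuity from the fundamental cell gives no sign information once walls have been crossed. Lemma~\ref{lem:bc-parity-descent} is purely combinatorial and says nothing about \(n_-(H_n)\).
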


\begin{proof}
Choose an orthonormal basis of \(U_n\) whose first \(n-2\) vectors span
\(E_{n-1}\) and whose last vector is \(\widehat w_n\).  In this basis
write
\[
 H_n=\begin{pmatrix}B&b\\ b^{t}&c\end{pmatrix},
 \qquad B=H_n|_{E_{n-1}}.
\]
Whenever \(B\) is nonsingular, Schur complementation gives
\[
 \det H_n=\det B\,(c-b^tB^{-1}b),\qquad
 \langle H_n^{-1}\widehat w_n,\widehat w_n\rangle
       =(c-b^tB^{-1}b)^{-1}.
\]
Multiplication proves (13.12).  The identity extends across the locus
where \(B\) is singular by continuity (equivalently, it is the usual
cofactor identity).  If \(L(I)\) is even, Lemma~\ref{lem:bc-determinants}
says that \(\det H_n>0\).  Under (13.13), formula (13.12) makes the
determinant of the restricted form negative.  The restriction, and hence
\(H_n\), is therefore indefinite.
\end{proof}

Since
\(H_n(\nu)^{-1}=H_n(-\nu)\), the left side of (13.13) is a single
matrix coefficient of the same normalized intertwiner at \(-\nu\).

\begin{lemma}[the outer-chain coefficient]\label{lem:bc-outer-chain}
Write the dominant parameter in Cartesian coordinates as
\(\nu=(x_1,\ldots,x_n)\), and put
\[
 c_q=x_1^2-x_q^2,\qquad d_q=(1-x_1)^2-x_q^2
 \qquad(2\leq q\leq n).
\]
With the convention that an empty product is one, define
\[
 \mathcal S_n(\nu)=
 \sum_{j=1}^{n-1}
   \frac{\prod_{r=2}^{j}c_r}{\prod_{r=2}^{j+1}d_r}
 +\frac{\prod_{r=2}^{n}c_r}{\prod_{r=2}^{n}d_r}.          \tag{13.14}
\]
Then
\[
 \frac{\langle H_n(\nu)^{-1}w_n,w_n\rangle}
      {\langle w_n,w_n\rangle}
       =\frac{n\mathcal S_n(\nu)-1}{n-1}.                 \tag{13.15}
\]
The formula is the same in types \(B_n\) and \(C_n\).
\end{lemma}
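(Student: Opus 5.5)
The plan is to compute the left side of (13.15) through the maximal parabolic factorization of \(w_0\) attached to the coordinate \(e_1\), and to reduce it to a two-dimensional recursion along the outer chain. By the remark after Lemma~\ref{lem:bc-cofactor-reduction}, \(H_n(\nu)^{-1}=H_n(-\nu)\), so it suffices to evaluate \(\langle H_n(-\nu)w_n,w_n\rangle\).

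\textbf{Step 1: factor off the Levi.} Let \(W_J\) be the hyperoctahedral group on \(e_2,\ldots,e_n\). Write \(w_0=w_{0,J}\,c\), where \(c\) is the minimal-length coset representative. Its root sequence is the palindromic outer chain
\[
 e_1-e_2,\ \ldots,\ e_1-e_n,\ z_1,\ e_1+e_n,\ \ldots,\ e_1+e_2 .
\]
The vector \(w_n\) is \(W_J\)-fixed, because \(W_J\) only permutes \(d_2,\ldots,d_n\). Each normalized factor \((1+\beta(\nu)s_\beta)/(1+\beta(\nu))\) with \(s_\beta\in W_J\) therefore acts on \(w_n\) by \(1\). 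Ordering the reduced expression so that the \(w_{0,J}\)-block acts first on \(w_n\), I would reduce (using the Hermitian symmetry of the product) to \(\langle M_c(-\nu)w_n,w_n\rangle\), where \(M_c\) is the product of the \(2n-1\) outer-chain factors.

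\textbf{Step 2: reduce to transpositions.} The coordinate reflection \(s_{z_1}\) acts trivially on \(U_n\), so the \(z_1\)-factor is the identity. This is also the only place where \(B_n\) and \(C_n\) differ; since the roots \(e_1\pm e_q\) take the same values \(x_1\pm x_q\) in both conventions, type independence is immediate. Both \(s_{e_1-e_q}\) and \(s_{e_1+e_q}\) act on \(\mathcal D_n\) as the transposition \(d_1\leftrightarrow d_q\). Each remaining factor at \(-\nu\) is
\[
 \frac{1-(x_1\mp x_q)\tau_q}{1-(x_1\mp x_q)},
\]
where \(\tau_q\) is the transposition \(d_1\leftrightarrow d_q\).

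\textbf{Step 3: the inner product.} The natural move is to split the palindrome at \(z_1\) and write \(\langle M_+M_-w_n,w_n\rangle=\langle M_-w_n,M_+^{*}w_n\rangle\). Here \(M_-\) is the \(e_1-e_q\) half and \(M_+^{*}\) is the \(e_1+e_q\) half taken in reversed order, which is the same recursion with \(x_q\mapsto -x_q\). Applying the factors for \(q=2,3,\ldots\) in turn to a vector of the form \(a\,d_1+\sum_{r}b_r d_r\), each step only moves mass between \(d_1\) and \(d_q\). I expect the coefficient of \(d_q\) after step \(q\) to be a ratio of products of linear factors \(x_1\pm x_r\) over \(1-x_1\pm x_r\).

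Pairing the two halves, the terms \((x_1-x_r)(x_1+x_r)\) should combine into \(c_r\) and \((1-x_1+x_r)(1-x_1-x_r)\) into \(d_r\). The pairing of the \(d_q\)-components should produce the \(j\)-th summand of (13.14), with the surviving \(d_1\)-component giving the final full product. Finally, subtracting the trivial component (the projection of \(d_1\) onto \(U_n\) involves \(\tfrac1n\sum d_i\)) and normalizing by \(\langle w_n,w_n\rangle=n(n-1)\) should give the affine form \((n\mathcal S_n-1)/(n-1)\).

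The main obstacle is Step 3: organizing the recursion so that it visibly telescopes into the nested products \(\prod_{r\le j}c_r/\prod_{r\le j+1}d_r\). My first attempt would be to verify the pattern by hand for \(n=2\) and \(n=3\), then prove by induction on \(n\) that the chain vector after the first \(j\) steps has \(d_1\)-coefficient \(\prod_{r=2}^{j+1}(x_1+x_r)/(1-x_1-x_r)\) times a normalization, before combining the two halves.
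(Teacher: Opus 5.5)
\textbf{Gap in Step 2.} Your outline follows the paper's route: factor $w_0=w^Jw_{0,J}$, discard the Levi block on the $W_J$-fixed vector, run along the palindromic chain, pair the two halves into $c_r=a_rb_r$ and $d_r=(1+a_r)(1+b_r)$, and subtract $\mathbf 1$. But Step~2 is wrong, and Step~3 inherits the error.

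In (1.2) the operator in the $k$-th factor is the \emph{simple} reflection $\sigma(s_k)$, paired with the scalar $\beta_k(\nu)$. It is not $\sigma(s_{\beta_k})$. To pass to root reflections, put $y_k=s_1\cdots s_k$ in the notation of (1.2). Then $s_k=y_{k-1}^{-1}s_{\beta_k}y_{k-1}$ and $y_ky_{k-1}^{-1}=s_{\beta_k}$, and telescoping gives
\[
 A_E(\nu)=\sigma(w_0)^{-1}\prod_{k=N}^{1}\frac{\beta_k(\nu)+\sigma(s_{\beta_k})}{1+\beta_k(\nu)} .
\]
So in your fixed frame each outer factor is $(\beta+\tau_q)/(1+\beta)=\tau_q(1+\beta\tau_q)/(1+\beta)$, not $(1+\beta\tau_q)/(1+\beta)$; here $\sigma(w_0)=1$ on $\operatorname{Sym}^2V$. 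The two versions interchange the `stay' and `move' coefficients at every step.

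With your factors, the outward half sends $nd_1$ to $u_1=n/\prod_{r=2}^n(1+a_r)$ and $u_q=na_q/\prod_{r=2}^q(1+a_r)$. Your pairing then produces
\[
 n^2\Bigl(\frac1{\prod_{r=2}^{n}d_r}+\sum_{q=2}^{n}\frac{c_q}{\prod_{r=2}^{q}d_r}\Bigr)
\]
instead of $n^2\mathcal S_n$. The two expressions agree for $n=2$, where $U_2$ is a line, so a hand check at $n=2$ cannot detect the error. For $n=3$ at $x=(3,1,0)$, however, the bracket equals $7/2$ while $\mathcal S_3=7$. Your own factors give $1/\prod d_r$ from the $d_1$-component and a single $c_q$ in each remaining term. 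They therefore cannot produce the nested products you anticipate in Step~3, namely $\prod c_r/\prod d_r$ from the $d_1$-component and $\prod_{r\le j}c_r$ in the $j$-th term. In any case, Step~3 is only conjectured in your proposal, not carried out.

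\textbf{Repair.} Either frame works once the factors are right.

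\emph{The paper's route.} The paper stays in the moving frame. On $\mathcal D_n$ the letters $s_1,\dots,s_{n-1}$ of (13.16) act as adjacent transpositions, so the mass leaving coordinate one is passed along $1\to2\to\cdots\to n$ and back; this is (13.17). The first coordinate is read off at the end.

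\emph{Your route, corrected.} Alternatively, keep your fixed frame but use the factors $(\beta+\tau_q)/(1+\beta)$. The outward half then gives
\[
 u_q=\frac{n\prod_{r=2}^{q-1}a_r}{\prod_{r=2}^{q}(1+a_r)},\qquad
 u_1=n\prod_{r=2}^{n}\frac{a_r}{1+a_r}.
\]
The corrected factors are symmetric, so your adjoint splitting at $z_1$ is legitimate. It pairs the two halves to
\[
 \langle H_n(-\nu)(w_n+\mathbf 1),w_n+\mathbf 1\rangle=n^2\mathcal S_n .
\]
The cross terms vanish because the intertwiner on $\mathcal D_n$ fixes $\mathbf 1$ and preserves $U_n$. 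Subtracting $\langle\mathbf 1,\mathbf 1\rangle=n$ and dividing by $n(n-1)$ gives (13.15).

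This corrected version is a tidy symmetric variant of the paper's out-and-back bookkeeping. It trades following the first coordinate through the whole chain for an inner product of two one-sided recursions in $a_r$ and $b_r$.
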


\begin{proof}
Use \(H_n(\nu)^{-1}=H_n(-\nu)\) and the parabolic factorization
\(w_0=w^Jw_{0,J}\), where \(W_J\) is the signed-permutation group on
coordinates \(2,\ldots,n\).  The vector \(w_n\) is \(W_J\)-fixed, so the
Levi intertwiner acts as the identity on it.  The minimal relative
element \(w^J\), which changes the sign of the first coordinate, has the
palindromic reduced expression
\[
 s_1s_2\cdots s_{n-1}s_n s_{n-1}\cdots s_2s_1.            \tag{13.16}
\]
On \(\mathcal D_n\), \(s_n\) is the identity and \(s_i\) interchanges
the \(i\)-th and \((i+1)\)-st coordinates.  At \(-\nu\), the parameters
on the outward and return passages across the edge \((q-1,q)\) are
\[
              a_q=-(x_1-x_q),\qquad b_q=-(x_1+x_q).
\]

It is convenient to add the constant vector and write
\(w_n+\mathbf1=(n,0,\ldots,0)\).  Every normalized transposition factor
fixes \(\mathbf1\).  During the outward passage, let \(y_j\) be the
amount of the initial value \(n\) left at the \(j\)-th coordinate after
that coordinate has completed its outward interactions.  The elementary
two-coordinate update gives
\[
 \frac{y_j}{n}=
 \begin{cases}
 \displaystyle
 \frac{\prod_{r=2}^{j}a_r}
      {\prod_{r=2}^{j+1}(1+a_r)},&j<n,\\[3mm]
 \displaystyle
 \frac{\prod_{r=2}^{n}a_r}
      {\prod_{r=2}^{n}(1+a_r)},&j=n.
 \end{cases}                                               \tag{13.17}
\]
On the return passage, the proportion of a value at coordinate \(j\)
which reaches coordinate one is given by the same expressions with
\(a_r\) replaced by \(b_r\).  Hence the final first coordinate of
\(H_n(-\nu)(w_n+\mathbf1)\) is \(n\mathcal S_n(\nu)\), because
\[
 a_rb_r=c_r,\qquad (1+a_r)(1+b_r)=d_r.
\]
Subtracting the fixed vector \(\mathbf1\), the final first coordinate of
\(H_n(-\nu)w_n\) is \(n\mathcal S_n(\nu)-1\).

All factors preserve the sum of the coordinates.  If a vector of
coordinate sum zero has first coordinate \(z\), its scalar product with
\(w_n\) is \(nz\).  Since
\(\langle w_n,w_n\rangle=n(n-1)\), formula (13.15) follows.
\end{proof}

The cofactor inequality (13.13) is therefore equivalent to
\[
                         n\mathcal S_n(\nu)<1.             \tag{13.18}
\]
As a consistency check, (13.15) agrees with the full intertwiner matrices
through rank eight.  On a state-C strip all
\(e_1+e_q\) lie in the ideal and the \(e_1-e_q\) form a threshold, so
\[
                    d_2<d_3<\cdots<d_n
\]
are initially negative and thereafter positive.  The next lemma combines
this observation with the parity information in
Lemma~\ref{lem:bc-parity-descent} to prove (13.18).  Notice that
(13.15), unlike a wall-by-wall signature formula, is an exact identity
for every generic parameter.

\begin{lemma}[sign of the outer-chain coefficient]
\label{lem:bc-outer-chain-sign}
Let \(I\) be a nonempty reflection-balanced locked ideal for which
\(L(I)\) is even.  Then
\[
             \langle H_n(\nu)^{-1}w_n,w_n\rangle<0
\]
throughout the cell of \(I\).
\end{lemma}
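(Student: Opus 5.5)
By Lemma~\ref{lem:bc-outer-chain}, the claim is equivalent to \(n\mathcal S_n(\nu)<1\), i.e.\ (13.18), throughout the cell of \(I\). The plan is to prove this inequality by reading off the signs of the \(c_q\) and \(d_q\) from the cell, and then bounding the alternating ratios in (13.14).

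First I will locate the cell. Since \(L(I)\) is even and \(I\) is locked and balanced, the parity table (12.6) in Lemma~\ref{lem:bc-parity-descent} forces state C, because states A and B would make one of the determinant exponents odd. If the second exponent were odd, the \(X_n\) determinant already fails; I will take both exponents to be even, as the cofactor lemma is applied at determinant-even endpoints. In state C the outer strip is the threshold word (12.7). So on the cell every \(e_1+e_q\) lies in \(I\), giving \(x_1+x_q>1\), and \(e_1-e_q\in I\) exactly for \(q\ge r\), giving \(x_1-x_q>1\) there. Hence
\[
d_q=(1-x_1-x_q)(1-x_1+x_q).
\]
The first factor is negative. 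The second factor is positive for \(q<r\) and negative for \(q\ge r\). Since \(x_1>x_q>0\), every \(c_q\) is positive.

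Next I will analyse the sum (13.14) term by term. Write \(\rho_q=c_q/d_q\). Let \(N=r-2\) be the number of indices \(q\in[2,r-1]\), which are exactly those with \(d_q<0\). For these \(q\), \(|d_q|=(x_1+x_q-1)(1-x_1+x_q)\). Comparing with \(c_q=(x_1+x_q)(x_1-x_q)\) and using \(x_1-x_q<1\), I expect \(|\rho_q|>1\). For \(q\ge r\) the same comparison gives \(0<\rho_q<1\), since \((x_1-1)^2<x_1^2\) when \(x_1>1/2\). The \(j\)-th term equals
\[
\rho_2\cdots\rho_j\cdot\frac1{d_{j+1}}.
\]
Its sign is \((-1)^{\min(j,N)}\) for \(j\le N\), and then it stabilises. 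The parity information (12.4), together with balance, should pin down the parity of \(N\); I expect \(N\) to be odd, since the noncoordinate part of the threshold word has odd length. I will then group the terms as a telescoping alternating sum, for instance by writing \(1/d_{j+1}\) in terms of \(\rho_{j+1}\) and \(c_{j+1}\), to show that \(n\mathcal S_n\) is bounded above by a quantity that is negative or less than \(1\).

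The main obstacle is this last estimate: a sign-alternating sum of products of ratios of unrestricted size. I would first seek an exact closed form for the partial sums via the recursion \(y_j\mapsto y_{j+1}\) underlying (13.17), namely a two-term linear recurrence whose solution factorises. If that fails, I would fall back on monotonicity in each \(x_q\) within the cell, reducing to the boundary vertices of the alcove where the sum can be evaluated directly. In either case, the final check is that the sign of \(\langle H_n^{-1}w_n,w_n\rangle\) cannot change inside the cell, because the cofactor identity (13.12) ties its zeros to walls. It therefore suffices to verify the inequality at one convenient point, such as the barycentre or a limit point approaching a vertex. This would give a cleaner route to (13.18) than a global estimate.
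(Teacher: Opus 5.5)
Your setup is correct up to the point where (13.14) has to be evaluated: state C, \(c_q>0\), \(d_q<0\) exactly for \(2\le q<r\), and \(N=r-2\) odd are the right ingredients. But you leave the step you call the main obstacle open, and that step is the substance of the lemma. The missing observation is that \(c_q-d_q=x_1^2-(1-x_1)^2=2x_1-1\) does not depend on \(q\). Hence \(1/d_{j+1}=(\rho_{j+1}-1)/(2x_1-1)\), the \(j\)-th term of (13.14) equals \((\rho_2\cdots\rho_{j+1}-\rho_2\cdots\rho_j)/(2x_1-1)\), and the sum telescopes to
\[
\mathcal S_n=\frac{T_n-1}{2x_1-1}+T_n=\frac{2x_1T_n-1}{2x_1-1},\qquad T_n=\rho_2\cdots\rho_n .
\]
An odd number of the \(\rho_q\) are negative, so \(T_n<0\); and \(z_1\in I\) gives \(2x_1-1>0\). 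Therefore \(\mathcal S_n<0\), and so \(n\mathcal S_n<1\), at every point of the cell, with no size estimates needed. The size estimates you propose are in fact wrong. For \(q\ge r\) one has \(0<d_q=c_q-(2x_1-1)<c_q\), so \(\rho_q>1\), not \(\rho_q<1\); your own inequality \((1-x_1)^2<x_1^2\) says exactly \(d_q<c_q\). For \(q<r\), the condition \(|\rho_q|>1\) is equivalent to \(2x_q^2<x_1^2+(1-x_1)^2\). This fails already at the paper's \(C_4\) example \(\nu=(1,5/7,3/7,1/7)\), where \(\rho_2=-24/25\).

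Your fallback does not repair this. The cofactor identity (13.12) does not tie the zeros of \(\langle H_n^{-1}w_n,w_n\rangle\) to walls. The subspace \(E_{n-1}\) is not \(W\)-stable, so \(\det(H_n|_{E_{n-1}})\) is not governed by the product formula (12.3) and can vanish inside a cell. By the closed form, the zero locus is \(2nx_1T_n=n+2x_1-1\), and this does cut cells. For example, in type \(C_3\) take the cell where all \(e_i+e_j\) and \(2e_i\) exceed \(1\) and all \(e_i-e_j\) are below \(1\). It contains \(\nu=(0.6,0.59,0.58)\), where \(\mathcal S_3\approx-4.95\), and \(\nu=(1.5,1,0.6)\), where \(\mathcal S_3\approx42.5\). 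So reduction to one test point is unjustified, and it would in any case need an evaluation in infinitely many cells. Separately, you add \emph{both exponents in (12.2) even} as an extra hypothesis, on the grounds that otherwise the \(X_n\)-form fails. That fallback is not available, because the lemma feeds Theorem~\ref{thm:bc-diagonal-converse}, which tests only \(V\) and \(U_n\). You need to derive the parity instead. Balance makes \(|I|=L(I)+S(I)\) even, so \(S(I)\equiv L(I)\) and \((n-2)L(I)+(n-1)S(I)\equiv L(I)\equiv0\pmod 2\). Only then does Lemma~\ref{lem:bc-parity-descent} force state C.
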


\begin{proof}
Reflection balance implies that \(|I|=L(I)+S(I)\) is even.  Consequently
the two exponents in (12.2) have the same parity:
\[
 (n-2)L(I)+(n-1)S(I)\equiv L(I)\pmod2.                    \tag{13.19}
\]
Indeed, this is immediate when \(n\) is odd, while for \(n\) even it
follows from \(S(I)\equiv L(I)\pmod2\).  Thus both determinant exponents
are even, and Lemma~\ref{lem:bc-parity-descent} puts the outer boundary
in state C.

Write its threshold as in (12.7).  Thus every \(e_1+e_q\) and \(z_1\)
lies in \(I\), while
\[
 e_1-e_q\in I\quad\Longleftrightarrow\quad q\geq r
\]
for some \(3\leq r\leq n+1\).  The number of noncoordinate roots removed
with the outer strip is
\[
                  (n-1)+(n-r+1)=2n-r.
\]
It is odd by (12.4), so \(r\) is odd.  Hence \(r-2\), the number of
indices \(q\) for which \(e_1+e_q\in I\) but \(e_1-e_q\notin I\), is
odd.

For the notation of Lemma~\ref{lem:bc-outer-chain}, the inequalities
defining the cell give
\[
 d_q=(1-x_1-x_q)(1-x_1+x_q)
 \begin{cases}
 <0,&q<r,\\
 >0,&q\geq r.
 \end{cases}                                               \tag{13.20}
\]
Moreover \(z_1\in I\) gives \(2x_1-1>0\): in type \(B\) one even has
\(x_1>1\), and in type \(C\) the coordinate wall is \(2x_1=1\).

Put
\[
 T_n=\prod_{q=2}^n\frac{c_q}{d_q}.
\]
Since \(c_q=x_1^2-x_q^2>0\) and an odd number \(r-2\) of the \(d_q\)
are negative, \(T_n<0\).  Finally, the sum in (13.14) telescopes:
\[
 \mathcal S_n
 =\frac{T_n-1}{2x_1-1}+T_n
 =\frac{2x_1T_n-1}{2x_1-1}.                               \tag{13.21}
\]
Here we used \(c_q-d_q=2x_1-1\), since
\[
 \frac{\prod_{r=2}^{j}c_r}{\prod_{r=2}^{j+1}d_r}
 =\frac1{2x_1-1}
 \left(
 \prod_{r=2}^{j+1}\frac{c_r}{d_r}
 -\prod_{r=2}^{j}\frac{c_r}{d_r}
 \right).
\]
Both the numerator \(2x_1T_n-1\) and hence \(\mathcal S_n\) are
negative.  In particular \(n\mathcal S_n<1\), and (13.15) proves the
required inequality.
\end{proof}

\begin{theorem}[the sharper \(B/C\) criterion]
\label{thm:bc-diagonal-converse}
For type \(B_n\) or \(C_n\), a generic Hermitian spherical parameter is
unitary if and only if its normalized forms are positive definite on
the reflection representation \(V\) and on the single irreducible
constituent
\[
 U_n=\left\{\sum_{i=1}^n c_i e_i^2:\sum_{i=1}^n c_i=0\right\}.
\]
\end{theorem}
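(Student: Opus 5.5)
The forward direction is immediate: a unitary module has positive forms on every $W$-type, in particular on $V$ and $U_n$. For the converse I would run the uniform reduction of Proposition~\ref{prop:uniform-reduction}, with the test family $\{V,U_n\}$ in place of $\{V\}\cup\operatorname{Sym}^2V$. The argument of that proposition uses only three facts: reflection positivity means balance, positivity on each fixed type is preserved by Yu moves, and every nonempty balanced locked ideal fails on some test type. So it suffices to verify these three hypotheses for this smaller family.

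Hypothesis (1) is Theorem~\ref{thm:reflection-signature}, since $w_0=-1$ in types $B_n$ and $C_n$. Hypothesis (2) is Yu's two-wall theorem (Theorem~\ref{thm:yu-two-wall}) applied to the single type $E=U_n$.

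The substance is hypothesis (3). Let $I$ be a nonempty reflection-balanced locked ideal and $\nu$ a point of its cell.

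\emph{Case 1: $L(I)$ is odd.} By Lemma~\ref{lem:bc-determinants}, $\det A_{U_n}<0$ on the cell, so $A_{U_n}$ is not positive definite.

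\emph{Case 2: $L(I)$ is even.} By Lemma~\ref{lem:bc-outer-chain-sign}, $\langle H_n^{-1}w_n,w_n\rangle<0$ throughout the cell. This is inequality (13.13), so the cofactor reduction, Lemma~\ref{lem:bc-cofactor-reduction}, shows that $H_n=A_{U_n}(\nu)$ is indefinite. Lemma~\ref{lem:bc-outer-chain-sign} itself relies on balance to equate the two determinant parities (13.19). It then uses Lemma~\ref{lem:bc-parity-descent} to force state C with an odd threshold $r$, and the telescoped formula (13.21) to give $\mathcal S_n<0$. So in this case $U_n$ alone detects $I$, and $X_n$ is never needed.

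With (1)--(3) in place, the reduction goes through. Suppose $\nu$ is positive on $V$ and on $U_n$. Then $I(\nu)$ is balanced. Apply Yu moves until the ideal is locked; these preserve positivity on both $V$ and $U_n$. By (3) the terminal ideal is empty, so $\nu$ lies in a Yu cell, and Yu's theorem gives unitarity.

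One point needs care. The lemmas in Section~\ref{sec:bc-determinant} are stated for reflection-balanced locked ideals, and Case 1 holds at any generic point of the cell. Lemma~\ref{lem:bc-cofactor-reduction} requires $\det H_n>0$, which is exactly the case $L(I)$ even. The place I would double-check is that the descent Lemma~\ref{lem:bc-parity-descent} applies verbatim in both conventions $B_n$ and $C_n$, including small rank ($n=2$, where $U_2$ is one-dimensional). For $n=2$ I would verify directly that the locked balanced ideals are handled by the determinant alone, or by the cofactor with $E_1=0$, where (13.12) reads $1=\det H_2\cdot\langle H_2^{-1}\widehat w,\widehat w\rangle$. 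In that rank, Lemma~\ref{lem:bc-outer-chain-sign} should simply make the $L$-even case vacuous.
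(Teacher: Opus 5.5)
Your proposal is correct and is essentially the paper's own proof. You reduce a reflection-positive cell by Yu moves to a balanced locked ideal \(I\), then use Lemma~\ref{lem:bc-determinants} when \(L(I)\) is odd, and Lemma~\ref{lem:bc-outer-chain-sign} together with the cofactor identity (13.12) when \(L(I)\) is even. Your rank-two check also comes out as you predicted: in \(B_2\) and \(C_2\) the only nonempty balanced locked ideal consists of the two nonsimple roots, which has \(L(I)=1\), so the even case is vacuous there.
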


\begin{proof}
Only the converse requires proof.  Apply the reduction procedure of
Proposition~\ref{prop:uniform-reduction} to a reflection-positive cell,
obtaining a balanced locked ideal \(I\); positivity
on the fixed type \(U_n\) is preserved in both directions by each move.
If \(I\ne\varnothing\) and \(L(I)\) is odd,
Lemma~\ref{lem:bc-determinants} gives \(\det A_{U_n}<0\), so the
\(U_n\)-form is not positive definite.  If \(L(I)\) is even,
Lemma~\ref{lem:bc-outer-chain-sign} and the cofactor identity
(13.12) show that the restriction of \(A_{U_n}\) to \(E_{n-1}\) has
negative determinant.  Again the \(U_n\)-form is indefinite.
Thus a cell positive on both \(V\) and \(U_n\) can reduce only to the
empty ideal.  It is therefore a Yu cell and is unitary.
\end{proof}

\begin{remark}
Exact signed-permutation calculations suggest the stronger formula
\[
                         n_-(A_{U_n};I)=S(I)
\]
for every nonempty reflection-balanced locked ideal.  It holds for all
such ideals through rank seven in both conventions.  The theorem uses
only the weaker consequence \(n_-(A_{U_n};I)>0\).
\end{remark}

\section{The exceptional non-simply-laced base cases}
\label{sec:f4-g2}

The preceding rank induction does not contain \(F_4\) or \(G_2\).  They
are finite base cases, just as the direct \(E_8\) calculation is the base
case for the exceptional simply-laced systems.

\begin{proposition}[type \(G_2\)]\label{prop:g2-converse}
In type \(G_2\), every reflection-positive cell which is not a Yu cell is
indefinite on the nontrivial constituent of \(\operatorname{Sym}^2V\).
\end{proposition}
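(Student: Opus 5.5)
The proof is a finite verification in rank two, following the pattern of Corollary~\ref{cor:e8-converse} and Theorem~\ref{thm:bc-diagonal-converse}. First we list the cells. The six positive roots of \(G_2\) form a chain in the root poset,
\[
 \alpha_1\lessdot\alpha_1+\alpha_2\lessdot2\alpha_1+\alpha_2\lessdot3\alpha_1+\alpha_2\lessdot3\alpha_1+2\alpha_2
\]
(with \(\alpha_1\) short), and \(\alpha_2\) sits below \(\alpha_1+\alpha_2\). So the upper ideals are few: the empty ideal, the tails of this chain, and the ideals containing \(\alpha_2\). Only those realized by a generic dominant \(\nu\) correspond to cells. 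By Theorem~\ref{thm:reflection-signature}, reflection positivity means the ideal is balanced. We pick out the balanced ideals by computing the alternating height sums; the heights along the chain are \(1,2,3,4,5\) and \(\operatorname{ht}(\alpha_2)=1\).

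Next we locate the Yu cells among the balanced ones. A Yu move needs a pair \(p,p'\) with \(\langle p,(p')^\vee\rangle=\langle p',p^\vee\rangle=1\). In \(G_2\) this forces both roots to have the same length and to span an \(A_2\): either two short roots or two long roots at angle \(60^\circ\). Running the reduction of Proposition~\ref{prop:uniform-reduction}, we expect every balanced ideal to reduce either to \(\varnothing\) or to a short list of nonempty balanced locked ideals. Table (1.8) tells us there are exactly two Yu cells. So the claim reduces to showing that the \(\operatorname{Sym}^2V\) form is indefinite on each nonempty balanced locked ideal. By Theorem~\ref{thm:yu-two-wall} this suffices for every reflection-positive non-Yu cell, since positivity on a fixed \(W\)-type is preserved in both directions along Yu moves.

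For the analytic step, \(W(G_2)\) is dihedral of order \(12\) and \(\operatorname{Sym}^2V=\mathbf 1\oplus V'\). Here \(V'\) is the two-dimensional representation on which the rotation by \(60^\circ\) acts as rotation by \(120^\circ\), so the reflections act with a one-dimensional \((-1)\)-space. We proceed in two stages.

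First, we try the determinant test as in Lemma~\ref{lem:bc-determinants}. Formula (12.3) gives
\[
 \operatorname{sgn}\det A_{V'}=(-1)^{|I|},
\]
and this is useless, because a balanced ideal has even cardinality.

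Second, we therefore use a trace or matrix-coefficient test, computed directly. Take the reduced word \((s_1s_2)^3\). Write the six \(2\times2\) factors \(\bigl(1+\beta(\nu)\sigma(s_\beta)\bigr)/\bigl(1+\beta(\nu)\bigr)\) on \(V'\), with \(\beta(\nu)\) expressed in the two coordinates \((\alpha_1(\nu),\alpha_2(\nu))\). We then evaluate at one explicit rational point in each locked cell, exactly as in table (8.5), and check that the resulting symmetric \(2\times2\) matrix has a negative diagonal entry or a negative trace. Combined with positive determinant, either of these shows that both eigenvalues are negative, or at least that the form is not positive definite. Alternatively, crossing one wall from an adjacent non-balanced cell with Lemma~\ref{lem:b2-derivatives}-style first derivatives would do, but the explicit evaluation is cleaner.

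The main obstacle is the combinatorial bookkeeping: identifying which upper ideals are actually realized as cells in the chamber and which are locked. The same-length requirement for Yu pairs in \(G_2\) makes lockedness delicate. The actual \(2\times2\) computation is routine rational arithmetic.
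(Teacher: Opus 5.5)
Your route is the paper's: enumerate the eight antichains, keep the three balanced ideals, set aside the Yu cells, and evaluate the word $(s_1s_2)^3$ at a rational point of the cell that remains. Working on $V'$ instead of on all of $\operatorname{Sym}^2V$ is an inessential variation.

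The gap is in the step you leave open, namely deciding which cell must be tested, and the criterion you state for it gives the wrong answer. In your labelling ($\alpha_1$ short) the balanced ideals are
\[
\varnothing,\qquad \{3\alpha_1+\alpha_2,\,3\alpha_1+2\alpha_2\},\qquad I=\{\alpha_1+\alpha_2,\,2\alpha_1+\alpha_2,\,3\alpha_1+\alpha_2,\,3\alpha_1+2\alpha_2\}.
\]
Call the second one the long tail. Suppose every same-length pair at $60^\circ$ gave a Yu move. Then deleting $\alpha_1+\alpha_2$ and then $2\alpha_1+\alpha_2$ would carry $I$ to the long tail, and the long pair would carry that to $\varnothing$. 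You would find no nonempty locked ideal and three Yu cells instead of the two in table (1.8), so nothing would be left to test.

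The short pair is not a valid Yu move. Its two walls meet at $(\alpha_1(\nu),\alpha_2(\nu))=(0,1)$, and $H_{\alpha_2}$ and $H_{3\alpha_1+\alpha_2}$ also pass through that point. So the singular part there is not $(1+s_{p'})(1+s_p)$, and Yu's $S_3$ rank count does not apply; this is the $G_2$ exception mentioned in the summary of Yu's argument. The long walls meet at $(1/3,0)$, which lies on no other wall, so that move is legitimate. The paper itself simply records $I$ (its ideal generated by $(1,1)$) as locked.

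Once this is settled, lockedness is not even needed. Since $\varnothing$ and the long tail are Yu cells, the proposition reduces to showing that the $V'$-form is not positive definite on the cell of $I$. This must actually be computed, because, as you observe, the determinant is positive on every balanced cell.

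Take $(\alpha_1(\nu),\alpha_2(\nu))=(2/3,2/3)$. The root values are $2/3,2/3,4/3,2,8/3,10/3$. In an orthonormal basis, the product of the six normalized factors on $V'$ has determinant $1/2145$ and trace $-74/1155$. So the form is negative definite there, and hence on the whole cell. This is the paper's signature $(1,2)$ on $\operatorname{Sym}^2V$ with the trivial line removed.

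The same computation is positive definite on the long tail. This confirms that the two-wall equivalence genuinely fails for the short pair.
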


\begin{proof}
Take \(\alpha _1\) long and \(\alpha _2\) short.  The six positive roots
have simple-root coordinates
\[
 (1,0),(0,1),(1,1),(1,2),(1,3),(2,3).
\]
The eight antichains give three balanced upper ideals.  Two are Yu
ideals: one is empty and the other reduces to the empty ideal by one Yu
move.  The remaining ideal is generated by \((1,1)\)
and is locked.  In the reduced word \(121212\), insert the six rank-one
factors in \(\operatorname{Sym}^2V\).  At
\((\alpha _1(\nu),\alpha _2(\nu))=(2/3,2/3)\), symmetric elimination
gives complete signature
\[
                         (n_+,n_-)=(1,2).
\]
The positive line is the trivial constituent, so the other irreducible
constituent is negative definite.  The signature is constant on the cell.
\end{proof}

\begin{proposition}[type \(F_4\)]\label{prop:f4-converse}
In type \(F_4\), every reflection-positive cell which is not a Yu cell is
indefinite on the nine-dimensional nontrivial constituent of
\(\operatorname{Sym}^2V=\mathbf1\oplus E_9\).
\end{proposition}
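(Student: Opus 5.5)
The plan follows the template of the $G_2$ case (Proposition~\ref{prop:g2-converse}) and the $E_8$ case (Corollary~\ref{cor:e8-converse}), i.e.\ a finite root-poset enumeration followed by an exact evaluation at one rational point per locked ideal. First list the $24$ positive roots of $F_4$ in simple-root coordinates, with $\alpha_1,\alpha_2$ long and $\alpha_3,\alpha_4$ short in Bourbaki numbering, and record their heights and the cover relations. Then enumerate all antichains of $F_4^+$ and keep those whose upper ideal is realizable by a dominant generic $\nu$; realizability is a linear feasibility condition on $(\alpha_i(\nu))$. Among these, select the balanced ideals. By Theorem~\ref{thm:reflection-signature} these are exactly the reflection-positive cells, since $w_0=-1$ in $F_4$. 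On each balanced ideal run the Yu-move reduction of Proposition~\ref{prop:uniform-reduction}, testing lockedness with the cover criterion of Section~\ref{sec:d-even-locked}. In the non-simply-laced case a Yu move still requires the pairing condition (1.5), so only covers $\gamma=\beta+\alpha$ with $\langle\beta,\alpha^\vee\rangle=\langle\alpha,\beta^\vee\rangle=-1$ qualify, that is, both roots of the same length. The expected outcome is that the balanced ideals reduce either to $\varnothing$, giving the two Yu cells of (1.8), or to a short list of nonempty locked balanced ideals.

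By Theorem~\ref{thm:yu-two-wall}, positivity on the fixed type $E_9$ is equivalent at the two ends of every Yu move. It therefore suffices to show that $E_9$ is indefinite on each nonempty locked endpoint. Two shortcuts come first. One is a determinant test: by (12.3),
\[
\operatorname{sgn}\det A_{E_9}=(-1)^{\sum_{\beta\in I}\dim E_9^{-s_\beta}},
\]
and $\dim(\operatorname{Sym}^2V)^{-s_\beta}=3$ for every reflection in rank four, long or short. So if $|I|$ is odd the determinant is negative, and the form is not positive. Balance, however, forces $|I|$ to be even, so this test will usually be silent. The second and main tool is the approach used for $G_2$. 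Choose one explicit rational point in each locked cell, with all defining inequalities strict. Take a reduced word for $w_0$ of length $24$ and multiply the $24$ factors $(1+\beta_i(\nu)s_i)/(1+\beta_i(\nu))$ on the $10$-dimensional $\operatorname{Sym}^2V$. Then compute the inertia of the resulting rational symmetric matrix by exact symmetric elimination. Because the trivial line is positive, it is enough to find $n_-\ge1$; this shows that $E_9$ is indefinite, or negative, on the cell. Since the form is nondegenerate throughout a generic cell, its signature is constant there, so one point per cell suffices.

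The main obstacle is the correctness and completeness of the finite enumeration rather than the evaluation. One must be sure that every balanced locked ideal has been found, and that non-realizable antichains have been discarded correctly, since there are far more antichains in $F_4$ than cells. A further difficulty is that the simply-laced recursion (4.3) is unavailable across the double bond, by the remark after Corollary~\ref{cor:sym2-signature}, so no signature shortcut replaces the direct matrix product. If the locked list turns out to be a single ideal, I would try first to identify it with the $D_4$ short-coroot picture: by (1.11) we have $R^\vee_{\rm sh}=D_4$ for $F_4$. The hope is to use a Levi-type restriction to reuse the even-$D$ count of Lemma~\ref{lem:d-tail-count}, with $m=2$ and $j=1$, giving $n_2>0$. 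If that identification fails, I would fall back on the explicit rational computation, which serves as an integer certificate in the same sense as (8.5).
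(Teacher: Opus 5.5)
Your plan is essentially the paper's own proof: enumerate the $105$ antichains, keep the fifteen balanced ones, Yu-reduce them to $\varnothing$ or to the seven nonempty locked ideals of (14.1), and certify $n_-\ge1$ on $\operatorname{Sym}^2V$ at one rational point per locked cell. That certificate comes from exact symmetric elimination of the $24$-factor product (made symmetric by the invariant Gram matrix), together with positivity of the trivial line and the two-sided transport of positivity on $E_9$ along Yu moves. Only minor adjustments are needed: state the Yu-pair test as $\langle\beta,\alpha^\vee\rangle=\langle\alpha,\beta^\vee\rangle=-1$, not as ``same length'', since in $F_4$ the latter would also admit orthogonal covers such as $\alpha_3\lessdot\alpha_2+\alpha_3$ and $\alpha_2+\alpha_3\lessdot\alpha_2+2\alpha_3$; your $D_4$ shortcut never applies, because there are seven locked ideals and no rank-four $D_4$ is parabolic in $F_4$; and it is the computed indices $2,4,6<9$ that upgrade ``not positive'' to ``indefinite'' on $E_9$.
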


\begin{proof}
Use the Bourbaki simple roots.  Root-poset deletion among the twenty-four
positive roots gives 105 antichains, of which fifteen are balanced.  Two
are Yu ideals (allowing a reduction sequence of length zero).  Every
other one reduces to one of the seven locked ideals in the
following table.  An antichain is written in simple-root coordinates;
the last column is the negative index on \(\operatorname{Sym}^2V\).
\[
\begin{array}{c|c|c}
 |I|&\min(I)&n_-\\ \hline
14&\{(0,0,1,1)\}&4\\
14&\{(1,1,0,0)\}&4\\
18&\{(1,1,0,0),(0,0,1,1)\}&4\\
16&\{(1,1,0,0),(0,1,2,1)\}&6\\
12&\{(1,1,1,1),(0,1,2,1)\}&4\\
20&\{(1,1,0,0),(0,1,1,0),(0,0,1,1)\}&6\\
4 &\{(1,2,3,2)\}&2
\end{array}                                                \tag{14.1}
\]

Here is an exact verification of the last column.  For the seven rows,
respectively, take the simple-root values \(10^{-6}\) times
\[
\begin{array}{c|rrrr}
1&125001&125000&250000&875000\\
2&857144&285715&142857&142857\\
3&857144&285715&285714&857143\\
4&857144&285715&285714&285714\\
5&222223&111112&222222&555556\\
6&666670&666668&666667&666667\\
7& 83334& 83334&166667&166667
\end{array}                                                \tag{14.2}
\]
All root inequalities defining the corresponding rows of (14.1) are
strict.  In the polynomial basis
\(x_i^2,x_ix_j\;(i<j)\), form the rational matrix
\[
 A(\nu)=\prod_{k=1}^{24}
 \frac{1+\beta_k(\nu)\,\operatorname{Sym}^2(s_{i_k})}
      {1+\beta_k(\nu)},                                   \tag{14.3}
\]
where \(i_1\cdots i_{24}\) is a reduced word for the long element and
\(\beta_k=s_{i_1}\cdots s_{i_{k-1}}\alpha_{i_k}\).
Multiplication by the invariant Gram matrix
\(\operatorname{diag}(2,2,2,2,1,\ldots,1)\) makes (14.3) symmetric.
Exact rational symmetric elimination gives the seven indices displayed
in (14.1).  Thus no rounding or numerical eigenvalue decision is involved.

The trivial summand has positive normalized form.  Every positive entry
in the last column therefore belongs to \(E_9\), proving the assertion.
Yu moves preserve positivity in both directions on each fixed \(W\)-type,
so the same nonpositivity conclusion holds for every ideal reducing to
one of these seven endpoints.

The exhaustion is certified entirely in simple-root coordinates: generate
the twenty-four positive roots, enumerate the 105 antichains, retain the
fifteen whose alternating height sum is zero, and test the cover criterion
for lockedness.  The seven nonempty survivors have precisely the minimal
antichains in (14.1); the other balanced ideals either are empty or expose
a Yu pair.  Thus the calculation involves only comparisons of four-tuples
of nonnegative integers.
\end{proof}

\begin{corollary}\label{cor:exceptional-nonsimply-converse}
The reflection and second-symmetric-power criterion characterizes the Yu
cells in types \(F_4\) and \(G_2\).
\end{corollary}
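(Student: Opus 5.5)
The corollary should follow from the uniform reduction principle, Proposition~\ref{prop:uniform-reduction}, applied to \(R=F_4\) and \(R=G_2\). I would check its three hypotheses in turn.

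Hypothesis (1), that reflection positivity is equivalent to balance, is Theorem~\ref{thm:reflection-signature}. It applies here because \(w_0=-1\) in both \(F_4\) and \(G_2\), and the remark following that theorem lists both among the systems where the formula is used directly.

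Hypothesis (2) is Yu's two-wall theorem, Theorem~\ref{thm:yu-two-wall}. Its root-system lemma includes the rank-two \(G_2\) configuration, which the summary of Yu's argument says is checked directly, so no further input is needed.

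Hypothesis (3) is the substantive input, and here I would quote Propositions~\ref{prop:g2-converse} and \ref{prop:f4-converse}.
\begin{itemize}
\item For \(G_2\), the only nonempty balanced locked ideal is the one generated by \((1,1)\), and on it the nontrivial constituent of \(\operatorname{Sym}^2V\) is negative definite.
\item For \(F_4\), the seven nonempty balanced locked ideals in (14.1) all have positive negative index on \(\operatorname{Sym}^2V\).
\end{itemize}
In both cases the trivial summand is positive by normalization. Hence the negativity lies in the nontrivial constituent (\(E_9\) for \(F_4\)), so that irreducible constituent fails positivity.

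One point needs care. Proposition~\ref{prop:uniform-reduction} is phrased in terms of locked ideals, whereas the two propositions are phrased as ``every reflection-positive non-Yu cell is indefinite''. I would state explicitly that the certificates in their proofs enumerate all balanced ideals: 3 for \(G_2\) and 15 for \(F_4\). In each case the locked nonempty ones are exactly those treated, and the remaining balanced ideals are Yu ideals.

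With the three hypotheses in hand, the argument in the proof of Proposition~\ref{prop:uniform-reduction} concludes. Suppose a cell is positive on \(V\) and on every constituent of \(\operatorname{Sym}^2V\). Then its ideal is balanced, and Yu moves reduce it to a locked balanced ideal with positivity preserved along the way. That locked ideal must therefore be empty, so the cell is a Yu cell. Conversely, Yu cells are unitary by Theorem~\ref{thm:yu-two-wall}, hence positive on every \(W\)-type.

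The only real obstacle is this bookkeeping between the certificate's ``reduces to'' language and the reduction principle. There is no new analysis to do, since the finite exact computations (14.1)–(14.2) and the \(G_2\) elimination are already established.
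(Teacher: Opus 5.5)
Your proposal is correct and is essentially the paper's argument. The paper's proof just cites Propositions~\ref{prop:g2-converse} and~\ref{prop:f4-converse}, whose proofs already carry out the reduction to locked endpoints that you make explicit through Proposition~\ref{prop:uniform-reduction}, and then uses that Yu cells are unitary.

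One slip, which your argument never uses: for \(F_4\) the balanced ideals other than the seven locked ones in (14.1) are not all Yu ideals. Only two of the fifteen are, and the remaining six reduce by Yu moves to one of the seven locked endpoints; hypothesis (3) needs only that the nonempty locked ideals are exactly those listed.
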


\begin{proof}
The propositions exclude every non-Yu balanced cell.  Conversely, Yu's
cells are unitary and hence positive on every \(W\)-type.
\end{proof}

\section{A second proof using boundary-induced \(W\)-types}
\label{sec:boundary-induced-proof}

The proof above is deliberately restricted to the reflection representation
and its second symmetric power.  We now give a shorter, independent
exclusion of the non-Yu cells when arbitrary finite Weyl-group types are
allowed.  We retain the first proof because its restriction to symmetric
degree two is important in applications where only petite types are
available.

The new argument has three ingredients: a parabolic-heredity lemma,
three small induced-sign calculations, and the root-poset classification of
locked ideals already used above.  In particular, it does not use a list
of relevant \(W\)-types.

\subsection{Parabolic induction of Hermitian forms}

Let \(J\subset\Delta\), let \(W_J\) be the corresponding parabolic Weyl
group, and let \(\tau\) be a unitary \(W_J\)-module.  Put
\[
                 \tau^W=\operatorname{Ind}_{W_J}^{W}\tau . \tag{15.1}
\]
If \(\nu\) is a generic real parameter, write \(\nu_J\) for its
restriction to the span of \(R_J\).

\begin{lemma}[parabolic heredity of indefiniteness]
\label{lem:boundary-parabolic-compression}
Assume that the ambient and Levi spherical standard modules are generic
and Hermitian.  In the compact induced model, the form on the
\(\tau^W\)-multiplicity space is congruent to
\[
 C_J(\nu)\otimes A^{R_J}_{\tau}(\nu_J),                    \tag{15.2}
\]
where \(C_J(\nu)\) is a nonsingular Hermitian form on the relative coset
space.  Consequently, if the Levi form is indefinite, then the ambient
form on \(\tau^W\) is indefinite.  The same statement holds for a
conjugate parabolic root subsystem.
\end{lemma}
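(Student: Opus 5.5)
The plan is to realize the ambient spherical module by induction in stages and to factor the long intertwiner along the parabolic. Since \(\nu\) is generic, the spherical standard module is irreducible and
\[
 X_R(\nu)\cong\operatorname{Ind}_{\mathbb H_J}^{\mathbb H_R}
   \bigl(X_J(\nu_J)\boxtimes\chi\bigr),
\]
where \(\chi\) is the unitary character on the orthogonal complement of the span of \(R_J\). In the compact picture this is \(\mathbb C[W]\otimes_{\mathbb C[W_J]}X_J\), as in the proof of Lemma~\ref{lem:downward-inheritance}. I will use the factorization \(w_0=w^Jw_{0,J}\), the one already used in Lemma~\ref{lem:bc-outer-chain}, with \(w^J\) the minimal relative element. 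Choosing a reduced expression compatible with this factorization, (1.2) splits into two commuting pieces. The first is the product of the root factors for \(R_J^+\), which is the Levi operator \(A^{R_J}(\nu_J)\) acting in the \(X_J\)-slot. The second is the product of the root factors for \(R^+\setminus R_J^+\), which I denote \(C_J(\nu)\).

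The key step is to identify the \(\tau^W\)-multiplicity space. By Frobenius reciprocity,
\[
 \operatorname{Hom}_W\bigl(\tau^W,X_R\bigr)\cong\operatorname{Hom}_{W_J}(\tau,X_R|_{W_J}),
\]
and the part that comes from the identity coset is the Levi multiplicity space of \(\tau\). More concretely, I would write \(\tau^W=\bigoplus_{x\in W^J}x\otimes\tau\) and follow the action of the factored intertwiner on this decomposition. The relative factor \(C_J(\nu)\) has \(w^J\) sending \(\nu\) to a parameter whose \(J\)-part is again \(\pm\nu_J\). Because of this, it acts only on the coset label and commutes with the \(W_J\)-action on \(\tau\). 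The Levi factor acts on the \(\tau\)-slot by \(A^{R_J}_\tau(\nu_J)\). Using the standard identifications of source and target, and composing with the invariant Hermitian pairing, the ambient form becomes congruent to \(C_J(\nu)\otimes A^{R_J}_\tau(\nu_J)\), which is (15.2).

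Next I would check that \(C_J(\nu)\) is Hermitian and nonsingular. It is Hermitian because both the ambient operator and the Levi operator are Hermitian for Hermitian parameters, and the factorization is compatible with adjoints. It is nonsingular because genericity means \(\beta(\nu)\neq\pm1\) for every root outside \(R_J\), so each relative rank-one factor is invertible.

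The consequence is then linear algebra. Suppose \(C_J\) has signature \((p,q)\) with \(p+q>0\), and the Levi form has signature \((a,b)\) with \(a,b>0\). The tensor product has signature \((pa+qb,\,pb+qa)\), and both entries are positive, so the ambient form is indefinite. For a parabolic subsystem \(yR_Jy^{-1}\) conjugate to a standard one, I would conjugate by \(y\). This transports the parameter, the induced type (since \(\operatorname{Ind}\) from a conjugate subgroup is isomorphic), and the Levi form, and reduces to the standard case.

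The main obstacle is the congruence (15.2). One has to verify honestly that the relative factor does not mix the \(\tau\)-slot, and that the source and target identifications for \(w^J\) match up. I would try first to handle this exactly as in the proof of Lemma~\ref{lem:bc-outer-chain}. There the Levi intertwiner acts as the identity on \(W_J\)-fixed vectors; here the analogue is that the Levi factor acts as \(A^{R_J}_\tau\) on the \(\tau\)-slot. The relative element \(w^J\) normalizes the \(J\)-part of \(\nu\) up to \(w_{0,J}\), and I expect this to make the \(\tau\)-slot pass through the relative factor untouched. If an exact tensor splitting fails, I would fall back to a weaker statement. It would suffice that the ambient form restricted to the identity-coset subspace is the Levi form, together with the Schur-type argument used in Lemma~\ref{lem:downward-inheritance}. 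Indefiniteness of that restriction already forces indefiniteness of the full form.
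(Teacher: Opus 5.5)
Your outline follows the paper's own route: induction in stages, \(w_0=w^Jw_{0,J}\), a relative factor acting only on coset labels, Frobenius reciprocity, and the signature of a tensor product. (The paper justifies the coset-label step by Schur's lemma for the irreducible Levi standard module.) The step you single out as the main obstacle is a genuine gap, and neither your heuristic nor the paper's Schur argument closes it.

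Write \(P=P^JP_J\) for the product (1.2), split into relative and Levi parts.

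\begin{itemize}
\item In the model \(\tau^W=\bigoplus_{x\in W^J}x\otimes\tau\), \(P_J\in\mathbb C[W_J]\) acts by left multiplication. It acts by \(\tau(P_J)\) on the identity-coset summand. On the other summands it permutes cosets (its orbits on \(W/W_J\) are the double cosets \(W_JxW_J\)) or acts through \(x^{-1}P_Jx\). So it is not \(\mathrm{id}\otimes A^{R_J}_\tau(\nu_J)\).
\item The formula \(1\otimes A_J\) is correct on the whole module \(\bigoplus_x x\otimes X_J\). Passing from \(X_J\) to \(\tau\), however, uses \(X(\nu)|_W\cong\bigoplus_\sigma\sigma^W\otimes\operatorname{Hom}_{W_J}(\sigma,X_J)\). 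The relative operator is merely \(W\)-equivariant, and nothing prevents it from mixing the summands for different \(\sigma\): by Mackey, \(\operatorname{Hom}_W(\sigma^W,\tau^W)\ne0\) in general. Knowing \((w^J\nu)_J=\pm\nu_J\) does not change this.
\item The Schur argument fails for the same reason. The relative intertwiner is an \(\mathbb H\)-map between modules induced from Levi data whose components orthogonal to \(R_J\) are \(\nu^J\) and \(-\nu^J\), nonzero at generic \(\nu\). The restriction of such a module to \(\mathbb H_J\) is not a multiple of \(X_J(\nu_J)\).
\item Your claim that the two pieces commute is unproved. It is exactly what Hermitian symmetry of \(C_J\) requires, since \((PP_J^{-1})^*=P_J^{-1}P\).
\item Your fallback, that the identity-coset compression equals the Levi form, holds only at the parabolic endpoint \(\nu\in V_J\) of Lemma~\ref{lem:downward-inheritance}. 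The paper itself remarks that the compression need not equal the Levi form here.
\end{itemize}

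In fact no repair is possible, because the statement fails as written. Nothing in your argument uses dominance of \(\nu\), and the lemma explicitly allows conjugate parabolics.

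Take \(G_2\) with \(\alpha_1\) long, and \(\nu\) in the cell \(I(\nu)=\{\alpha_1+3\alpha_2,\,2\alpha_1+3\alpha_2\}\), for instance \(\alpha_1(\nu)=0.1\), \(\alpha_2(\nu)=0.32\). This cell is one inverse Yu move from \(U_0\), so Theorem~\ref{thm:yu-two-wall} makes the form positive on every \(W\)-type.

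The subsystem \(\{\pm\theta\}\), with \(\theta=2\alpha_1+3\alpha_2\), is conjugate to the parabolic \(\{\pm\alpha_1\}\). Its Levi form on \(\operatorname{sgn}\oplus\mathbf1\) is \(\operatorname{diag}\bigl((1-\theta(\nu))/(1+\theta(\nu)),1\bigr)\). This is indefinite, since \(\theta(\nu)=1.16>1\).

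The lemma would then make the form on \(\operatorname{Ind}_{\langle s_\theta\rangle}^W(\operatorname{sgn}\oplus\mathbf1)=\mathbb C[W]\) indefinite. Some \(W\)-type would then fail positivity, which is a contradiction. More generally, the same argument would exclude every cell containing a root with \(\beta(\nu)>1\), leaving only \(U_0\).

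A correct statement needs an extra hypothesis, such as vanishing component \(\nu^J=0\) (that is, \(\nu\in V_J\)), where the ambient form really is the induced form. An argument along your lines can only prove such a restricted version.
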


\begin{proof}
Realize the spherical standard module by induction in stages from the
spherical standard module for the graded Hecke subalgebra \(\mathbb H_J\).
Its compact picture is the vector-space sum
\[
             \bigoplus_{x\in W^J}x\otimes X_J(\nu_J),       \tag{15.3}
\]
where \(W^J\) denotes the minimal coset representatives.  Factor the
normalized long operator into the Levi long operator and the normalized
relative operator.  The first gives the invariant form on
\(X_J(\nu_J)\).  The relative operator is an
\(\mathbb H_J\)-intertwiner.  Since the generic Levi standard module is
irreducible, Schur's lemma shows that, after a change of compact-picture
basis, it acts only on the relative coset factor.  Thus the induced
invariant form is the tensor product of the Levi form with a relative
Hermitian form \(C_J(\nu)\).  The latter is nonsingular away from the
ambient reducibility hyperplanes.  Applying unitary Frobenius reciprocity
to the \(\tau\)-multiplicity space gives (15.2).  This is the
compact-picture form of normalized induction in stages for graded affine
Hecke algebras; see also \cite{Lusztig}.

Diagonalize \(C_J\) by congruence.  Formula (15.2) becomes an orthogonal
sum of nonzero scalar multiples of the Levi form.  Every such multiple is
indefinite when the Levi form is indefinite, proving the assertion.
Conjugating the construction by an element of \(W\) proves the last
statement.
\end{proof}

If a Levi form merely fails to be positive, replace \(\tau\) by
\(\tau\oplus\mathbf1\).  The normalized form on the trivial type is
positive, so the enlarged Levi form is indefinite and the lemma applies.
Although \((\tau\oplus\mathbf1)^W\) need not be irreducible, its
indefinite multiplicity form implies that the form on at least one of
its irreducible \(W\)-constituents is not positive definite.

\begin{remark}
This lemma is precisely where allowing arbitrary \(W\)-types simplifies
the proof.  For a prescribed ambient type such as \(V\) or
\(\operatorname{Sym}^2V\), the required Levi type need not occur in the
correct induced multiplicity space and one needs the wall-recursion
arguments above.  For the induced type (15.1), the tensor factor occurs
by construction.  A literal principal compression to the identity coset
need not equal the Levi form; the congruence (15.2), rather than such a
principal-block assertion, is the required statement.
\end{remark}

\subsection{The two boundary seeds}

For the boundary antichains used below, the theorem of Sommers used in
Section~\ref{sec:reflection-signature}, or the explicit conjugacy
certificates below, shows that
\(W_A=\langle s_\alpha:\alpha\in A\rangle\) is conjugate to a parabolic
subgroup.  The natural boundary module is
\[
 M_A=\operatorname{Ind}_{W_A}^{W}\operatorname{sgn}_{W_A}
     \cong \operatorname{sgn}_W\otimes\mathbb R[W/W_A].    \tag{15.4}
\]
Thus every simple reflection acts in the coset basis as the negative of
a permutation matrix.  Formula (1.2) therefore makes every entry of its
form an explicitly checkable rational function.

\begin{lemma}[the \(B_2\) seed]\label{lem:boundary-b2-seed}
In type \(B_2\) or \(C_2\), let \(I\) be the ideal consisting of the two
nonsimple positive roots.  If \(\gamma=\min(I)\), the form on
\[
       M_\gamma=\operatorname{Ind}_{\langle s_\gamma\rangle}^{W(B_2)}
                    \operatorname{sgn}
\]
is indefinite on the cell of \(I\).
\end{lemma}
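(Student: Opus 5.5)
The plan is to decompose \(M_\gamma\) into irreducible \(W(B_2)\)-types and evaluate the form on each summand separately. The form on a multiplicity-free sum of inequivalent types is an orthogonal sum of the forms on those types. So it suffices to find one summand on which the form is positive and another on which it is negative.

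First I make the ideal explicit. With positive roots \(e_1-e_2,\ z_2,\ z_1,\ e_1+e_2\) (where \(z_p=e_p\) in \(B_2\) and \(z_p=2e_p\) in \(C_2\)), the simple roots are \(e_1-e_2\) and \(z_2\). Hence
\[
I=\{z_1,\ e_1+e_2\},\qquad \gamma=\min(I)=z_1,
\]
and the cell of \(I\) is cut out by \(z_1(\nu)>1\), \((e_1+e_2)(\nu)>1\), \(z_2(\nu)<1\) and \((e_1-e_2)(\nu)<1\). The subgroup \(\langle s_\gamma\rangle\) is conjugate to the parabolic subgroup \(\langle s_{z_2}\rangle\).

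Next I decompose \(M_\gamma\) by Frobenius reciprocity. The module has dimension \(4\). An irreducible type occurs in \(M_\gamma\) with multiplicity equal to the multiplicity of \(-1\) for \(s_{z_1}\) on that type. The reflection representation \(V\) occurs once. Among the four characters, exactly those sending the coordinate reflections to \(-1\) occur, each once. These are \(\operatorname{sgn}\) and the character \(\chi\) with \(\chi(s_{z_p})=-1\) and \(\chi(s_{e_1\pm e_2})=+1\). Therefore
\[
M_\gamma\cong V\oplus\operatorname{sgn}\oplus\chi .
\]

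Now I compute the signs summand by summand.

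On \(V\), Theorem~\ref{thm:reflection-signature} applies because \(w_0=-1\) in \(B_2\). Using (12.5), \(z_1\) has height \(2\) and \(e_1+e_2\) has height \(3\) in both conventions. So \(I\) is balanced and the form on \(V\) is positive definite.

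On a one-dimensional character \(\lambda\), the form is the scalar given by (12.3):
\[
\prod_{\beta:\ \lambda(s_\beta)=-1}\frac{1-\beta(\nu)}{1+\beta(\nu)} .
\]
Its sign is \((-1)^{\#\{\beta\in I:\ \lambda(s_\beta)=-1\}}\). For \(\operatorname{sgn}\) this exponent is \(|I|=2\), so the form is positive. For \(\chi\) only the coordinate roots count, and \(I\) contains exactly one of them, namely \(z_1\). So the \(\chi\)-form is negative throughout the cell.

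Hence the form on \(M_\gamma\) has both positive and negative directions, that is, it is indefinite on the cell of \(I\).

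The only step needing care is the bookkeeping: correctly identifying the simple roots and heights in each convention, and verifying the Frobenius multiplicities. No genuine analytic obstacle arises, since every piece is one-dimensional or governed by the reflection signature formula. As a cross-check, one could evaluate the \(4\times4\) coset-basis matrix from (1.2) at a sample point of the cell, for instance \(\nu=(1.2,\,0.5)\) in \(B_2\) coordinates.
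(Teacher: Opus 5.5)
Your route is different from the paper's and it works, but as written it misidentifies $\gamma$ in type $C_2$. By (12.5) with $n=2$, in $C_2$ one has $e_1+e_2=\alpha_1+\alpha_2$ (height $2$) and $z_1=2e_1=2\alpha_1+\alpha_2$ (height $3$). So $I=\{e_1+e_2,2e_1\}$ and $\gamma=\min(I)=e_1+e_2$, not $z_1$. Correspondingly, $\langle s_\gamma\rangle$ is conjugate to $\langle s_{\alpha_1}\rangle$, not to $\langle s_{z_2}\rangle$.

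Frobenius reciprocity then gives $M_\gamma\cong V\oplus\operatorname{sgn}\oplus\chi'$, where $\chi'$ is $-1$ on $s_{e_1\pm e_2}$ and $+1$ on $s_{2e_1},s_{2e_2}$. Your $\chi$ has $\chi(s_\gamma)=1$, so it does not occur in $M_\gamma$ at all in type $C_2$.

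The repair is one line. In both conventions $I$ contains exactly one coordinate root and exactly one noncoordinate root. Whichever of $\chi,\chi'$ is $-1$ on $s_\gamma$ is the one occurring in $M_\gamma$, and by (12.3) its form is negative on the whole cell. Balance of $I$ is unaffected, since the heights are still $2$ and $3$.

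You also do not need the $V$-summand or Theorem~\ref{thm:reflection-signature}. The $\operatorname{sgn}$-form is already positive because $|I|=2$, and that supplies the positive direction.

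The paper argues differently. It picks the single point $\alpha_1(\nu)=\alpha_2(\nu)=3/5$ and multiplies the four coset-basis factors of (1.2). It exhibits a negative principal $2\times2$ minor (15.5), then appeals to constancy of the signature on a generic cell.

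Your decomposition replaces that numerical certificate with Frobenius reciprocity and the scalar formula (12.3). It gives the sign at every point of the cell without any constancy argument. Together with the reflection signature theorem for $V$, it even gives the full signature $(3,1)$, which matches the exact $4\times4$ matrix.

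What the paper's method buys is uniformity across the seeds. For the $G_2$ seed, the locked ideal generated by $\alpha_1+\alpha_2$ contains two short and two long roots. Hence both one-dimensional constituents of $\operatorname{Ind}_{\langle s_\gamma\rangle}^{W(G_2)}\operatorname{sgn}$ are positive on that cell, and your character argument gives nothing there. The matrix-coefficient check, by contrast, applies verbatim to all three seeds.
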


\begin{proof}
The module has dimension four.  Take both simple-root values equal to
\(3/5\).  In type \(B_2\) the root sequence has values
\[
                  \frac35,\frac65,\frac95,\frac35,
\]
and in type \(C_2\) the middle two values are interchanged.  In a coset
basis, exact multiplication of the four unnormalized factors in (1.2)
  gives the following principal minor of order two:
\[
 -\frac{56576}{390625}\quad(B_2),
 \qquad
 -\frac{206336}{390625}\quad(C_2).                         \tag{15.5}
\]
The form is therefore indefinite.  Both points lie in the required cell,
and the signature is constant on a generic cell.
\end{proof}

\begin{lemma}[the \(D_4\) seed]\label{lem:boundary-d4-seed}
Let \(I=R(D_4)^+\setminus\Delta(D_4)\).  Its minimal antichain is the
orthogonal triple
\[
 A=\{\alpha_c+\alpha_a,\alpha_c+\alpha_b,
                         \alpha_c+\alpha_d\},              \tag{15.6}
\]
where \(c\) is the trivalent node.  The form on
\[
 M_A=\operatorname{Ind}_{A_1^3}^{W(D_4)}\operatorname{sgn}
\]
is indefinite on the cell of \(I\).
\end{lemma}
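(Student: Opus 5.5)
Our plan is to copy the proof of Lemma~\ref{lem:boundary-b2-seed}: choose one explicit rational point in the cell of \(I\), compute the form (1.2) on \(M_A\) in the coset basis, and exhibit a negative principal minor. First we check the combinatorics. The positive roots of \(D_4\) have heights \(1,2,3,5\) with multiplicities \(4,3,3,1,1\) (heights \(1,2,3,4,5\)). The nonsimple roots therefore occupy heights \(\ge2\), and their minimal elements are the three height-two roots \(\alpha_c+\alpha_x\), \(x\in\{a,b,d\}\). Any two of these have inner product \(2-1-1+0=0\), so they form an orthogonal triple. The equation \(\sum_{\beta\in I}\epsilon(\beta)=-3+3-1+1=0\) shows that \(I\) is balanced; this is the \(T_2\) tail from Section~\ref{sec:d-even-locked}, namely \(K_{4,1}\).

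To fix the cell, we take the symmetric parameter with all simple-root values equal to \(a\), where \(1/2<a<1\). Then \(\beta(\nu)=a\cdot\operatorname{ht}(\beta)\), so exactly the roots of height \(\ge2\) lie in \(I(\nu)\). This point is regular and lies on no wall when, say, \(a=3/5\). Next we conjugate: \(s_{\alpha_c}\) sends the triple \(A\) to the simple roots \(\{\alpha_a,\alpha_b,\alpha_d\}\). Hence \(W_A\) is conjugate to the parabolic subgroup \(A_1^3\), and \(M_A\cong\operatorname{sgn}_W\otimes\mathbb R[W/W_{\{a,b,d\}}]\), which has dimension \(192/8=24\). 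In the coset basis every simple reflection acts as minus a permutation matrix. We will use a reduced word of length \(12\) for \(w_0\) together with the root values \(\beta_i(\nu)=a\operatorname{ht}(\beta_i)\). Then \(A_{M_A}(\nu)\) is a product of twelve explicit rational \(24\times24\) matrices, and it is symmetric in the permutation basis.

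To reduce the size of the computation, we will exploit the triality symmetry of the symmetric parameter together with the sign twist. Alternatively, we can restrict to a small subspace. A natural candidate is the span of the identity coset and the cosets of \(s_c\) and of a few short words; on this span we compute a \(2\times2\) principal minor, exactly as in (15.5), and look for a negative determinant. A useful guide is the heuristic that the identity-coset entry should be positive, since positivity must hold near the origin. The entry pairing the identity coset with \(s_c\) should then dominate, because \(\alpha_c\) is the root whose reflection carries the triple \(A\) onto simple roots. Finally, constancy of the signature on a generic cell extends indefiniteness from the single point to the whole cell of \(I\).

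The main obstacle is the exact evaluation: we must find a principal minor that is visibly negative without multiplying out full \(24\times24\) products. If no \(2\times2\) minor is negative, we will instead compute the complete signature by exact symmetric elimination at \(a=3/5\). That computation is finite and purely rational, in the same spirit as the \(F_4\) verification in (14.2)–(14.3).
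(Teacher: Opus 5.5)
Your preliminaries are correct, and your strategy is the one the paper uses. The antichain and balance checks, the conjugation $s_{\alpha_c}(A)=\{\alpha_a,\alpha_b,\alpha_d\}$, and the model $M_A\cong\operatorname{sgn}_W\otimes\mathbb R[W/W_{\{a,b,d\}}]$ of dimension $24$ all match. So does evaluating at a multiple of $\rho^\vee$ inside the cell: the paper uses $\tfrac34\rho^\vee$, you use $a=3/5$, and both points lie in the cell. The final appeal to constancy of the signature on a generic cell is also the paper's.

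The gap is that the proposal never produces a certificate of indefiniteness, which is the whole content of the lemma. Moreover, the shortcut you propose for finding one fails. Since $W_{\{a,b,d\}}$ is the stabilizer of the highest root $\theta$, you can identify $M_A$ with $\operatorname{sgn}_W\otimes\mathbb R[R(D_4)]$. Under this identification the identity coset becomes $e_\theta$, and the $s_c$-coset becomes $e_{s_c\theta}$ with $s_c\theta=\alpha_a+\alpha_b+\alpha_c+\alpha_d$. An unnormalized factor $1+t\,\sigma(s)$ then sends $e_\gamma$ to $e_\gamma-t\,e_{s\gamma}$, or to $(1-t)e_\gamma$ if $s\gamma=\gamma$.

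Multiplying out at $\tfrac34\rho^\vee$, with the positive scalar $\prod(1+\beta_i(\nu))$ omitted as in the paper, gives:
\begin{itemize}
\item the $\theta$-entry $197285/131072$, which is exactly the paper's positive entry;
\item the $s_c\theta$-entry $1791785/524288$;
\item the off-diagonal entry $-264435/131072$.
\end{itemize}
So the $2\times2$ block on the identity and $s_c$ cosets is positive definite. At your point $a=3/5$ it is still positive definite, with determinant about $0.016$. The off-diagonal entry does not dominate, and no negative minor appears there.

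Your reason for expecting the identity entry to be positive is also invalid. The point $\nu$ is separated from the fundamental cell by the eight walls of $I$, so nearness to the origin says nothing; the conclusion just happens to hold.

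To close the gap you need either an explicit entry of the opposite sign or the full symmetric elimination actually carried out. The paper does the former. At $\tfrac34\rho^\vee$, in its coset ordering, it exhibits the diagonal entries $-2822935/131072$ and $197285/131072$, so two $1\times1$ minors of opposite sign already suffice. It also records the full signature $(10,14)$.

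In the root model above, one convenient choice is the coset $s_as_bs_ds_c\,W_{\{a,b,d\}}$, which corresponds to the root $\alpha_c$. Its diagonal entry is about $-55.2$ at $\tfrac34\rho^\vee$ and about $-10.6$ at $a=3/5$. Compare this with the positive $\theta$-entry, which is about $1.05$ at $a=3/5$. Supplying such a pair of entries, or the complete signature, is what turns your plan into a proof.
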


\begin{proof}
The module has dimension \(192/8=24\).  At
\(\nu=\tfrac34\rho^\vee\), which is in the interior of this cell, exact
rational multiplication, after omission of the common positive
normalizing scalar, gives in one coset ordering the two diagonal entries
\[
 -\frac{2822935}{131072},
 \qquad
  \frac{197285}{131072}.                                  \tag{15.7}
\]
Thus the form is indefinite.  Exact symmetric elimination gives complete
signature \((n_+,n_-)=(10,14)\), although only the two entries in
(15.7) are needed.
\end{proof}

For later reference, the unique locked balanced \(G_2\) ideal is generated
by \(\alpha_1+\alpha_2\), with \(\alpha_1\) long.  The analogous module
\(\operatorname{Ind}_{A_1}^{W(G_2)}\operatorname{sgn}\) has dimension six.
At the point \((2/3,2/3)\), and again omitting the common positive
normalizing scalar, a principal minor of order two is
\[
                         -\frac{71125}{2187}.               \tag{15.8}
\]
It therefore supplies the \(G_2\) seed.

\subsection{Classical locked ideals}

We first treat type \(D_{2m}\).  Proposition
\ref{prop:d-locked-classification} says that its nonempty balanced locked
ideals are \(K_{n,j}\), \(1\le j<n/2\).  Their minimal antichains can be
written in coordinate form as
\begin{equation}
 \begin{split}
  &e_r-e_{r+2j}\quad(1\le r\le n-2j-1),\\
  &e_{n-2j}-e_n,\qquad e_{n-2j}+e_n .
 \end{split}
 \tag{15.9}
\end{equation}
Intersecting with the standard coordinate Levi on
\(e_3,\ldots,e_n\) gives
\[
                 K_{n,j}\cap R(D_{n-2})^+=K_{n-2,j}.       \tag{15.10}
\]
Iterating reaches \(K_{2j+2,j}\).  In that terminal ideal, use the
coordinate \(D_4\)-subsystem on the four coordinates
\[
                  e_1,\quad e_2,\quad e_{2j+1},\quad e_{2j+2}.
\]
A simple system is
\[
 e_1-e_2,\quad e_2-e_{2j+1},\quad
 e_{2j+1}-e_{2j+2},\quad e_{2j+1}+e_{2j+2}.                \tag{15.11}
\]
The first three nonsimple roots above its central node are exactly the
three roots in (15.9), and direct comparison gives
\[
 K_{2j+2,j}\cap R(D_4)^+
       =R(D_4)^+\setminus\Delta(D_4).                      \tag{15.12}
\]
Repeated application of Lemma~\ref{lem:boundary-parabolic-compression},
followed by Lemma~\ref{lem:boundary-d4-seed}, excludes every \(K_{n,j}\).

For types \(B_n,C_n\), put
\[
 a_{pq}=e_p-e_q,\qquad b_{pq}=e_p+e_q,
 \qquad z_p=e_p\ (B_n),\quad z_p=2e_p\ (C_n).
\]

\begin{lemma}[boundary \(B_2\) extraction]
\label{lem:boundary-bc-extraction}
Every nonempty balanced locked ideal in type \(B_n\) or \(C_n\) contains
indices \(p<q\) such that
\[
 I\cap\{a_{pq},z_q,b_{pq},z_p\}=\{b_{pq},z_p\}.            \tag{15.13}
\]
The four displayed roots form a parabolic coordinate subsystem of type
\(B_2\) or \(C_2\), and the right side is its bad ideal.
\end{lemma}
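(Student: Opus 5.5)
The plan is to find the bad \(B_2\)-pattern by scanning the outer rank strip, using the same shifted-boundary bookkeeping as in Lemma~\ref{lem:bc-parity-descent}, and to induct on \(n\). First I check the last assertion. The coordinates \(e_p,e_q\) span a parabolic (conjugate-to-standard) subsystem of type \(B_2\), resp.\ \(C_2\). Its positive roots are \(a_{pq},z_q,b_{pq},z_p\), with simple roots \(a_{pq}\) and \(z_q\). By (12.5), the order relations among these four roots are the same as in the rank-two root poset, as in Section~\ref{sec:reflection-signature}. Hence \(I\cap\{a_{pq},z_q,b_{pq},z_p\}\) is an upper ideal of that \(B_2\)/\(C_2\) poset. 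It equals \(\{b_{pq},z_p\}\) exactly when it contains both nonsimple roots and neither simple root, which is the bad ideal of Lemma~\ref{lem:boundary-b2-seed}.

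For the existence statement I would work with the outer strip \(a_{1q},b_{1q},z_1\) and its terminal state A--D of (12.6). State D is excluded by lockedness.

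In state C, (12.7) gives \(b_{1q}\in I\) for all \(q\), \(z_1\in I\), and \(a_{1q}\notin I\) for \(q<r\), where \(r\ge3\); in particular \(a_{12}\notin I\). If \(z_2\notin I\), then \((p,q)=(1,2)\) satisfies (15.13) and we are done. If \(z_2\in I\), the matching identities (12.8) and the rest of the proof of Lemma~\ref{lem:bc-parity-descent} give a nonempty balanced locked ideal \(I'\) in rank \(n-1\), supported on \(e_2,\dots,e_n\). By induction, \(I'\) contains a bad pattern \((p,q)\) with \(2\le p<q\). Since \(I'\) is obtained from \(I\) only by deleting the outer strip (after relabelling), the four roots of that pattern have the same membership in \(I\). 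So the pattern lifts. The base case \(n=2\) is the \(B_2\)/\(C_2\) classification: the only nonempty balanced locked ideal there is the bad ideal itself.

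The remaining cases are the parity-odd terminal states A and B. Here I would not use the descent lemma, but argue directly on the coordinate roots. The set \(\{p:z_p\in I\}\) is an initial segment \(\{1,\dots,s\}\), because \(z_1>z_2>\cdots\) in the root poset.

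First I would show \(1\le s<n\). If \(s=0\), then \(I\) lies in the noncoordinate staircase, and the boundary argument of Lemma~\ref{lem:d-boundary} (no inward corners; pairing along height diagonals) should force either a Yu corner or an unpaired cell that contradicts balance. If \(s=n\), the terminal diagonal is entirely occupied, so the scan ends in state C at every level, which is the case already handled.

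With \(1\le s<n\), take \(p=s\) and look for \(q>s\) with \(b_{sq}\in I\) and \(a_{sq}\notin I\); then \(z_q\notin I\) holds automatically. In type \(B\) we have \(b_{sq}>z_s\), so \(b_{sq}\in I\) for every \(q\), and any \(q\) with \(a_{sq}\notin I\) works; \(q=s+1\) does, since otherwise \(a_{s,s+1}\) together with \(z_{s+1}\notin I\) would produce an exposed Yu corner at the terminal diagonal. In type \(C\) we instead have \(b_{sq}<z_s\), so \(b_{sq}\in I\) is not automatic. There I would use the cover criterion at \(z_s\): its lower covers are \(b_{s,s+1}\) and the cover through the \(a\)-row. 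If \(b_{s,s+1}\notin I\), then \(z_s\) would be minimal with a uniquely covered neighbour, which gives a Yu pair.

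I expect this type-\(C\) verification at the terminal diagonal, in states A and B, to be the main obstacle. It is where the two conventions genuinely differ in their cover relations, and it must be checked directly from (12.5) rather than by the parity bookkeeping of (12.6).
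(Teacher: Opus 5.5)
There is a genuine gap. Your check that the four roots form a parabolic $B_2$/$C_2$ with the right side as its bad ideal is fine. Your state-C step also follows the paper, which likewise descends through (12.7)--(12.8); the extra test $z_2\notin I\Rightarrow(p,q)=(1,2)$ is harmless.

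The gap is in states A and B. The paper reads the witness directly off the terminal configuration of the scan: the occupied outer pair $b_{pq},z_p$ with $a_{pq},z_q$ absent. You replace this with the choice $p=s$, and in type $C_n$ the step you rely on is false. There $z_s=2e_s$ has the single lower cover $b_{s,s+1}=z_s-\alpha_s$; no $a$-root lies directly below it, since $a_{sn}$ is covered by $b_{sn}$. Moreover, a Yu pair consists of roots of the same length, and the long roots $2e_p$ of $C_n$ are pairwise orthogonal, so no Yu move can involve $z_s$ at all. A minimal $z_s$ is therefore compatible with lockedness (e.g.\ $\{z_1\}$ in $C_2$ is locked), and lockedness cannot give $b_{s,s+1}\in I$.

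That fact has to come from balance. If $z_s$ were minimal, $I$ would consist of three pieces:
\begin{itemize}
\item $z_1,\dots,z_s$, of colour $+s$;
\item rows $b_{p,p+1},\dots,b_{p,t_p}$ with $p<s$, each of colour $0$ or $-1$;
\item an $a$-part which lockedness forces to be a height tail $\{a_{pq}:q-p\ge n-k\}$, supported in rows with $t_p=n$.
\end{itemize}
A parity count then gives total colour at least $1$.

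The boundary cases are also only asserted.
\begin{itemize}
\item For $s=0$ in type $B$ you need that a locked ideal inside the $b$-staircase is $\{b_{pq}:p+q\le 2k+1\}$, of colour $k>0$. In type $C$ this case is immediate, since $z_1$ is the highest root.
\item The claim that $s=n$ ``forces state C at every level'' is not supported by anything in the scan. Instead, $z_n\in I$ puts every $b_{pq}$ and $z_p$ into $I$, of total colour $\lceil n/2\rceil$ in both conventions. A locked $a$-part has colour at least $1-\lceil n/2\rceil$, so $I$ is not balanced.
\item The Yu-corner argument you give in type $B$ for $a_{s,s+1}\notin I$ (also needed in type $C$) works only when $s\le n-2$: there the Yu cover $a_{s,s+2}$ of $\alpha_s$ forces $\alpha_{s+1}\in I$, hence $z_{s+1}\in I$. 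For $s=n-1$ the cover $\alpha_{n-1}\lessdot z_{n-1}$ (type $B$), resp.\ $\alpha_{n-1}\lessdot b_{n-1,n}$ (type $C$), is not an $A_2$ pair. Instead, lockedness propagates $\alpha_{n-1}\in I$ along the Yu covers $a_{i-1,i+1}$ to $\alpha_1,\dots,\alpha_{n-1}\in I$. Then $I=R^+\setminus\{z_n\}$, of colour $n-1\neq 0$.
\end{itemize}

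Once these colour counts are supplied, your route gives a sharper statement than the paper's proof. For every nonempty balanced locked ideal the witness is $(p,q)=(s,s+1)$, and neither the state split nor the rank descent is needed.
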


\begin{proof}
Use the shifted-staircase boundary scan in the proof of
Lemma~\ref{lem:bc-parity-descent}.  An ordinary inward corner is state D
of (12.6) and exposes a Yu pair, so it is impossible in a locked ideal.
At the shifted diagonal, terminal states A and B have, by direct
substitution in (12.5), an outer occupied pair \(b_{pq},z_p\) whose two
inner predecessors \(a_{pq},z_q\) are unoccupied.  This is (15.13).

In state C the complete outer strip is removed.  Equations (12.8) show
that the remaining ideal is exactly the intersection with the standard
coordinate subsystem of rank \(n-1\); the same cover check shows that it
is nonempty, balanced, and locked.  Apply induction on the rank.  A
witness in the smaller coordinate subsystem remains a witness in the
original ideal.  The rank-two endpoint is precisely (15.13), completing
the induction.
\end{proof}

Lemma~\ref{lem:boundary-b2-seed} and parabolic heredity now exclude every
nonempty balanced locked ideal in types \(B_n,C_n\).  This replaces both
the determinant split and the signed outer-strip descent needed when the
test types are restricted to symmetric degree two.

\subsection{Exceptional locked ideals}

The three nonempty balanced locked ideals of \(E_7\) have sizes
\(42,56,20\).  Each contains a parabolic \(D_4\) whose intersection is
the bad ideal of Lemma~\ref{lem:boundary-d4-seed}.  With the numbering
obtained from the \(E_8\) diagram by deleting node 7, one may take the
following centers and arms:
\[
\begin{array}{c|c|c}
|I|&c&(a,b,d)\\ \hline
42&(1,0,0,0,1,1,0)&(0,0,0,0,0,0,1),(0,0,1,0,0,0,0),(0,1,0,0,0,0,0)\\
56&(1,0,0,0,0,0,0)&(0,0,0,0,1,0,0),(0,0,1,0,0,0,0),(0,1,0,0,0,0,0)\\
20&(2,1,1,1,1,1,0)&(0,0,0,0,0,0,1),(0,0,0,0,1,0,0),(0,0,1,0,0,0,0).
\end{array}                                                 \tag{15.14}
\]
The Weyl words \(5465\), the identity, and
\(320140235465\), respectively, carry these bases to standard parabolic
\(D_4\) bases.  Hence Lemmas
\ref{lem:boundary-parabolic-compression} and
\ref{lem:boundary-d4-seed} exclude all three cells.

For \(E_8\), the four locked ideals of (8.2a) have sizes
\(96,64,112,32\).  The complete root-poset certificate gives respectively
\[
                              30,180,1,1                    \tag{15.15}
\]
bad \(D_4\) intersections.  For one witness in each row, the Weyl words
\begin{equation}
\begin{split}
 &054765,\\
 &102340120540120654023765,\\
 &\mathrm{id},\\
 &765401203240154023654765
\end{split}
\tag{15.16}
\end{equation}
carry its simple basis to the standard parabolic basis on nodes
\((0;4,2,1)\).  Thus all four are excluded by the same two lemmas.  Notice
that the three height-two roots of every witness belong to the minimal
boundary antichain; the induced detector is therefore genuinely attached
to the boundary.

Finally, every one of the seven locked \(F_4\) ideals in (14.1) contains
a parabolic bad \(B_2\).  A certificate is given in the following table;
the middle column lists a long and a short simple root for the \(B_2\),
in ambient simple-root coordinates, and the last column is a word carrying
the pair to the standard double edge.
\[
\begin{array}{c|c|c}
|I|&(a,b)&w\\ \hline
14&(1000,0110)&10\\
14&(0120,0001)&23\\
18&(1000,0110)&10\\
16&(1000,0110)&10\\
12&(1000,0111)&103\\
20&(0100,0010)&\mathrm{id}\\
4 &(0122,1110)&01321023.
\end{array}                                                 \tag{15.17}
\]
For each row, the two simple roots are outside \(I\), while
\(a+b,a+2b\) are inside.  Hence the intersection is the bad \(B_2\) ideal
and Lemma~\ref{lem:boundary-b2-seed} applies.  Equation (15.8) handles
the remaining type \(G_2\).

\subsection{Conclusion of the second proof}

\begin{theorem}[boundary-induced proof of exhaustion]
\label{thm:boundary-induced-exhaustion}
For every irreducible root system whose longest element is \(-1\), every
nonempty balanced locked ideal is indefinite on a finite \(W\)-module
obtained from the \(B_2\), \(D_4\), or \(G_2\) seed module by
parabolic induction.  Consequently its generic spherical unitary cells
are precisely Yu's cells.
\end{theorem}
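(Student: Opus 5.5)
The proof assembles the preceding subsections type by type and then feeds the outcome into the uniform reduction scheme. The irreducible root systems with \(w_0=-1\) are \(A_1\), \(B_n\), \(C_n\), \(D_{2m}\), \(E_7\), \(E_8\), \(F_4\) and \(G_2\). For each we list the nonempty balanced locked ideals and exhibit, inside each, a conjugate-parabolic subsystem \(R_J\) whose intersection with \(I\) is the bad ideal of one of the three seeds.

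The type-by-type inputs are as follows.
\begin{itemize}
\item For \(A_1\) there is no nonempty balanced ideal.
\item For \(D_{2m}\) we use Proposition~\ref{prop:d-locked-classification} together with the reductions (15.10)--(15.12), which produce a coordinate \(D_4\) carrying the bad ideal \(R(D_4)^+\setminus\Delta(D_4)\).
\item For \(B_n\) and \(C_n\) we use Lemma~\ref{lem:boundary-bc-extraction}, which gives a parabolic coordinate \(B_2\) or \(C_2\) carrying its bad ideal.
\item For \(E_7\) and \(E_8\) we use the finite root-poset enumeration, which gives three and four locked ideals respectively, and the \(D_4\) witnesses together with the Weyl words in (15.14) and (15.16).
\item For \(F_4\) we use the seven ideals of (14.1) and the \(B_2\) certificates (15.17).
\item For \(G_2\) the unique locked ideal is itself the seed of (15.8).
\end{itemize}

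Once a witness \(R_J\) is fixed, we proceed as follows. The restriction \(\nu_J\) of a generic parameter in the cell of \(I\) lies in the cell of \(I\cap R_J^+\) for \(R_J\). This holds because membership in the cell is decided root by root through \(\beta(\nu)>1\), and the positive system of a conjugate-parabolic subsystem is compatible with \(R^+\) after the conjugation. After a small perturbation inside the cell, \(\nu_J\) is also generic for the Levi.

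The seed lemmas (Lemmas~\ref{lem:boundary-b2-seed} and \ref{lem:boundary-d4-seed}, and (15.8)) show that the Levi form on the seed module \(M_A\) is indefinite. Here \(M_A\) is induced from the sign character of the subgroup generated by the minimal antichain, viewed inside \(W_J\). Signatures are constant on generic cells, so this indefiniteness holds throughout the Levi cell. Lemma~\ref{lem:boundary-parabolic-compression}, in its conjugate-parabolic form, then makes the ambient form on \(\operatorname{Ind}_{W_J}^W M_A\) indefinite. By the discussion following that lemma, at least one irreducible constituent of this induced module carries a non-positive form.

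For the ``consequently'' part we repeat the argument of Proposition~\ref{prop:uniform-reduction} with the test family replaced by all \(W\)-types. Suppose a cell is unitary. It is positive on \(V\), so Theorem~\ref{thm:reflection-signature} shows that its ideal is balanced. We apply Yu moves until the ideal is locked; Theorem~\ref{thm:yu-two-wall} preserves positivity on every type at each move. The terminal locked ideal must then be empty, since otherwise the previous paragraph produces a type on which the form is not positive. Hence the original cell is a Yu cell. Conversely, Yu cells are unitary by Theorem~\ref{thm:yu-two-wall}.

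The main obstacle is the compatibility step: checking that the Levi parameter \(\nu_J\) lies in the seed's cell for the \emph{conjugated} positive system, and not merely in some cell with the right intersection. I would handle it by writing \(R_J=y^{-1}(R_{J_0})\) with \(J_0\subset\Delta\) standard. The witnesses are chosen with their simple roots positive and outside \(I\), and the bad roots positive and inside \(I\). Hence \(\beta(\nu)\) is below or above \(1\) according to membership in \(I\), which determines the Levi cell directly. The compression lemma is insensitive to which positive system is used in the Levi, because the Levi long intertwiner and its form do not depend on that choice up to congruence.
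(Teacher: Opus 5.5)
Your proposal is correct and follows the paper's proof. Like the paper, you collect the type-by-type witnesses (15.9)--(15.13), (15.14)--(15.17) and (15.8), use parabolic heredity to transport indefiniteness from the seed modules, and then run the Yu-move reduction to a locked endpoint. Your compatibility paragraph makes explicit what the paper leaves implicit in its Weyl-word certificates. The perturbation of \(\nu_J\) is unnecessary: \(\beta(\nu_J)=\beta(\nu)\) for \(\beta\in R_J\), so \(\nu_J\) is already generic for the Levi.
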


\begin{proof}
The classical systems are covered by (15.9)--(15.13), the exceptional
systems by (15.14)--(15.17) and (15.8).  Starting with a balanced cell,
apply Yu moves until a locked endpoint is reached.  Positivity on every
fixed \(W\)-type is equivalent at the two ends of a Yu move.  A nonempty
endpoint is excluded by the boundary module just constructed, so a
unitary cell must reduce to the empty ideal.  Conversely Yu's theorem
makes every cell reducing to the empty ideal unitary.
\end{proof}

\begin{lemma}[Hermitian Levi reduction]
\label{lem:boundary-hermitian-levi}
Let \(\nu\) be a generic real Hermitian spherical parameter.  After Weyl
conjugacy there is a parabolic root subsystem
\(R_J=R_1\times\cdots\times R_t\), each of whose longest elements acts by
\(-1\), such that \(\nu\in V_J\).  The spherical standard module for
\(R\) is normalized parabolic induction from the tensor product of the
spherical standard modules with parameters \(\nu_i\) for the factors
\(R_i\).  It is unitary if and only if all those factor modules are
unitary.
\end{lemma}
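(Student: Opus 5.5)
We first produce the Levi, then identify the spherical standard module as an induced module, and finally prove the unitarity equivalence in both directions.

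\emph{Step 1: the Levi.} We may assume \(\nu\) is dominant; Weyl conjugation does not change the spherical module. The module is Hermitian exactly when \(w_0\nu\) is \(W\)-conjugate to \(-\nu\). Write \(\theta=-w_0\), the diagram automorphism. Since \(\nu\) is dominant, the dominant representative of \(-\nu\) is \(\theta\nu\), so the Hermitian condition reads \(\theta\nu=\nu\). We then take \(J\subset\Delta\) minimal such that \(\nu\) lies, after conjugation, in \(V_J=\operatorname{span}R_J^\vee\). We want \(J\) to decompose into components \(R_i\) with \(w_{0,J_i}=-1\).

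\emph{Step 1, continued.} A direct argument is to take the \(\theta\)-stable span and use the classical classification. The diagrams with \(w_0\ne-1\) are \(A_n\) (\(n\ge2\)), \(D_{2k+1}\) and \(E_6\). For these we work case by case in coordinates: a \(\theta\)-fixed vector is one with \(x\mapsto -x\) symmetry (type \(A\)), or with vanishing last coordinate (type \(D_{odd}\)), or lying in the \(F_4\)-type fixed subspace (type \(E_6\)). We exhibit each such vector as conjugate into a Levi of type \(A_1^{\lfloor (n+1)/2\rfloor}\), \(D_{2k}\) (or \(B\)-like), respectively \(D_4\). In type \(A_n\) this is the statement that a parameter of the form \((x_1,\dots,x_k,0,\dots,-x_k,\dots,-x_1)\) is conjugate to \((x_1,-x_1,x_2,-x_2,\dots)\), which lies in the span of the orthogonal roots \(e_{2i-1}-e_{2i}\). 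Type \(D_{2k+1}\): a vector whose last coordinate is \(0\) lies in \(V(D_{2k})\). Type \(E_6\): the \(\theta\)-fixed space is \(4\)-dimensional, and we expect it to be the \(W\)-translate of \(V(D_4)\), which we check on a basis. These are the only irreducible cases, and reducible \(R\) is handled factorwise.

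\emph{Step 2: the induced module.} The spherical standard module is \(\operatorname{Ind}_{\mathbb H(R)_{\ge}}\) of a character. By induction in stages through \(\mathbb H_J\) (a parabolic subalgebra of the form \(\mathbb H(R_1)\otimes\cdots\otimes\mathbb H(R_t)\otimes S(V_J^\perp)\), acting on \(V_J^\perp\) by \(0\)), it equals \(\operatorname{Ind}_{\mathbb H_J}^{\mathbb H}(X_1(\nu_1)\boxtimes\cdots\boxtimes X_t(\nu_t)\boxtimes\mathbf 1)\). Here each \(X_i(\nu_i)\) is irreducible by genericity.

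\emph{Step 3: the unitarity equivalence.} This is the main obstacle. Unitary induction preserves unitarity, which gives the implication ``factors unitary \(\Rightarrow\) module unitary''. For the converse, we argue as in the proof of Lemma~\ref{lem:downward-inheritance}. Because \(\nu\) is generic, the induced module is irreducible. Its invariant form is the normalized long intertwiner, which in the compact picture restricts on the identity coset to a positive multiple of the tensor product of the factor forms. Positivity of the ambient form therefore forces positivity of the Levi form. A tensor product of the irreducible factor forms is positive definite only if each factor form is definite; after the spherical normalization, that means each factor form is positive. The delicate point is exactly this identity-coset compression. Lemma~\ref{lem:boundary-parabolic-compression} warns that a principal compression need not equal the Levi form in general. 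At an induced point, however, the relative operator \(C_J\) is the identity on \(V_J^\perp\)-trivial data, since \(\nu\) has no component transverse to \(V_J\). So (15.2) yields the congruence with \(C_J\) positive, and this settles the converse.
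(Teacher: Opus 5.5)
Your argument is correct in substance, but it takes a different route from the paper at both key points.

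\textbf{The Levi.} The paper argues uniformly. Regularity makes the element $w$ with $w\nu=-\nu$ unique, hence an involution. Richardson's theorem then conjugates $w$ to some $w_{0,K}$ acting by $-1$ on $V_K$, whose $(-1)$-space is exactly $V_K$. You instead use $\theta=-w_0$ and the list $A_n$, $D_{2k+1}$, $E_6$, which gives explicit Levis $A_1^{k}$, $D_{2k}$, $D_4$. This is more concrete but case-dependent, and the $E_6$ step should be proved rather than expected. The $\theta$-fixed roots (those folding to long roots of $F_4$) number $24$ and span the $4$-dimensional fixed space. Hence $R\cap\{\theta\text{-fixed}\}$ is a rank-four simply-laced system with $24$ roots, i.e.\ $D_4$. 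It is parabolic because it is the intersection of $R$ with a subspace.

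\textbf{The converse.} The paper argues contrapositively on $W$-types. The Barbasch--Moy criterion gives a non-positive type for a nonunitary factor, and adjoining $\mathbf 1$ makes it indefinite. The congruence (15.2) then transports indefiniteness to an induced ambient type using only nonsingularity of $C_J$, so an explicit witnessing $W$-type is produced. You instead restrict the ambient form to the identity coset, as the paper itself does in Lemma~\ref{lem:downward-inheritance}. This needs the stronger fact that the ambient form is the block-diagonal induced form. In return it gives both directions at once and avoids the Barbasch--Moy criterion.

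\textbf{Your justification of that fact needs rephrasing.} The relative factors of the ambient long operator are not the identity at $\nu$: for $\beta\notin R_J$ one has $\beta(\nu)\ne0$, and when $w_0\ne-1$ the ambient $w_0$ does not even negate $\nu$. What is true is that the vanishing transverse component gives $w_{0,J}\nu=-\nu$. So the Hermitian form is implemented by the $w_{0,J}$-intertwiner, which is right multiplication by an element of $\mathbb H_J$. In the compact picture $\bigoplus_{x\in W^J}x\otimes X_J(\nu_J)$ this form is therefore block diagonal, with each block equal to the Levi form. Irreducibility and the spherical normalization then finish your argument.
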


\begin{proof}
Hermitianity gives \(w\nu=-\nu\) for some \(w\in W\).  After dividing by
the stabilizer of \(\nu\), we may take \(w\) to be an involution.
Richardson's theorem on involutions in finite Coxeter groups
\cite{Richardson} conjugates \(w\) to the longest element of a parabolic
subgroup on whose reflection space it acts by \(-1\).  Its \((-1)\)-space
contains \(\nu\), giving the asserted subsystem and its irreducible
factors.

Induction in stages identifies the ambient spherical standard module with
normalized parabolic induction from those factors and the zero, hence
unitary, character on the complementary split center.  Unitary inducing
data give a unitary induced module.  Conversely, the Barbasch--Moy
criterion supplies a finite type whose form is not positive for a
nonunitary generic Hermitian factor.  Adjoin the trivial type to make the
form indefinite, and apply the compact-picture factorization of
Lemma~\ref{lem:boundary-parabolic-compression}.  The induced \(W\)-type
makes the ambient form indefinite.  This proves the equivalence.
\end{proof}

\begin{corollary}[all root systems]
The conclusion of Theorem~\ref{thm:boundary-induced-exhaustion} holds for
every irreducible root system.  In particular, the second proof also
covers types \(A\), odd \(D\), and \(E_6\).
\end{corollary}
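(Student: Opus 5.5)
The plan is to reduce an arbitrary irreducible root system to the case already settled in Theorem~\ref{thm:boundary-induced-exhaustion} by means of the Hermitian Levi reduction, Lemma~\ref{lem:boundary-hermitian-levi}.

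Let \(\nu\) be a generic real Hermitian spherical parameter for \(R\). Lemma~\ref{lem:boundary-hermitian-levi} provides a Weyl conjugate of \(\nu\) lying in \(V_J\) for a parabolic subsystem \(R_J=R_1\times\cdots\times R_t\). On each irreducible factor \(R_i\) the longest element acts by \(-1\), and unitarity of the ambient spherical module is equivalent to unitarity of every factor module with parameter \(\nu_i\). Before applying Theorem~\ref{thm:boundary-induced-exhaustion} to the factors, I will check that each \(\nu_i\) is generic for \(R_i\). Regularity and the conditions \(\beta(\nu_i)\ne1\) for \(\beta\in R_i^+\) are inherited from the corresponding conditions in \(R\), since \(R_i^+\subset R^+\) after conjugation. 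Theorem~\ref{thm:boundary-induced-exhaustion} then applies to each factor. The unitary factor parameters are exactly the Yu cells of \(R_i\), and every non-Yu balanced cell of \(R_i\) is detected by a \(W(R_i)\)-type induced from a \(B_2\), \(D_4\) or \(G_2\) seed.

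The next step is to transport the detection back to \(R\). Take a factor \(\nu_i\) that is not in a Yu cell, and let \(\tau_i\) be the detecting \(W(R_i)\)-type. I tensor \(\tau_i\) with the trivial types of the remaining factors and add \(\mathbf 1\) so that the resulting form is indefinite, as in the remark following Lemma~\ref{lem:boundary-parabolic-compression}. Inducing this to \(W\) and applying Lemma~\ref{lem:boundary-parabolic-compression} shows that the ambient form is indefinite on some irreducible constituent. Since induction is transitive, that constituent is again a module obtained from a seed by parabolic induction. The conclusion of the theorem therefore holds for \(R\), with the generic unitary cells taken relative to the Hermitian flat \(V_J\), which is the convention fixed in Section~\ref{sec:yu-summary}. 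When the cell is a product of Yu cells, unitarity follows from the converse direction of Lemma~\ref{lem:boundary-hermitian-levi}.

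The step I expect to be the main obstacle is the bookkeeping that shows this description does not depend on the choices made. Two things must be checked. First, distinct Weyl conjugates and distinct choices of the involution \(w\) must produce the same set of cells up to \(W\). Second, a cell relative to the flat must genuinely correspond to a product of cells of the factors. For the second point I will argue that the reducibility hyperplanes of \(R\) restricted to \(V_J\) that meet the generic locus come from roots of \(R_J\). I would first try to justify this from genericity itself. Any root \(\beta\notin R_J\) that satisfies \(\beta(\nu)=1\) on an open subset of the flat would force that equality identically on \(V_J\). This is impossible, because \(\nu=0\) lies in the closure of \(V_J\) and gives \(\beta(0)=0\ne1\). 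Hence the only walls visible inside the flat come from \(R_J\), and the cells in the flat are products of the factor cells. Types \(A\), odd \(D\) and \(E_6\) are then covered as special cases.
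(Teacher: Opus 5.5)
Your core argument is the paper's own proof. The paper applies Theorem~\ref{thm:boundary-induced-exhaustion} to the irreducible factors supplied by Lemma~\ref{lem:boundary-hermitian-levi} and then induces back. Your genericity check for the $\nu_i$, and your transport of the detecting type $\tau_i\boxtimes\mathbf 1\oplus\mathbf 1$ through Lemma~\ref{lem:boundary-parabolic-compression}, simply spell that argument out. One small precision is needed. An indefinite induced multiplicity form only gives a constituent on which the form is \emph{not positive definite}. That constituent is also a summand of a seed-induced module, not itself induced from a seed.

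Your final bookkeeping paragraph, however, is wrong as argued. For $\beta\notin R_J$ not orthogonal to $V_J$, the set $H_\beta\cap V_J$ is a nonempty affine hyperplane of the flat, and a hyperplane never contains an open subset of $V_J$. So excluding ``$\beta(\nu)=1$ on an open subset'' only rules out $V_J\subset H_\beta$. It says nothing about whether $H_\beta$ cuts the flat, and in general it does. In type $A_2$ the Hermitian flat is conjugate to $\nu=(a,-a,0)$ with $R_J=\{\pm(e_1-e_2)\}$. The $R_J$-wall is $2a=1$. But $e_1-e_3$ takes the value $a$ on the flat, so $a=1$ is also a wall. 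For $a>0$ the flat cells are therefore $(0,\tfrac12)$, $(\tfrac12,1)$, $(1,\infty)$, which properly refine the two cells $(0,\tfrac12)$, $(\tfrac12,\infty)$ of the $A_1$ factor. Hence the cells in the flat are not products of factor cells.

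You do not need that claim, so drop it. By Lemma~\ref{lem:boundary-hermitian-levi}, unitarity at $\nu$ depends only on $(\nu_1,\dots,\nu_t)$, and by the theorem for each $R_i$ it is constant on factor cells. The unitary generic locus in the flat is then the product of the factor Yu cells, with the extra ambient walls removed; genericity excludes those walls anyway. State the conclusion in that form. Your worry about independence of the choices of conjugate and involution disappears for the same reason, because unitarity of the module at $\nu$ is intrinsic.
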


\begin{proof}
Apply the theorem to the irreducible factors supplied by
Lemma~\ref{lem:boundary-hermitian-levi}, and then induce back.  This is the
usual Hermitian Levi reduction and is compatible with the
downward-inheritance discussion of Section~\ref{sec:downward-inheritance}.
\end{proof}

\begin{remark}
The new proof isolates its case-dependent input very sharply.  Uniformly,
one uses reflection balance, Yu moves, parabolic heredity, and the three
seed calculations.  The finite part consists only of the locked-antichain
classification and the parabolic-subsystem certificates (15.14)--(15.17).
The much finer signature calculations on \(\operatorname{Sym}^2V\) remain
essential for the first proof, but are not needed here.
\end{remark}

\section{Consequences}

\begin{theorem}[symmetric-degree-two criterion]
Let \(\mathbb H(R)\) be the equal-parameter graded affine Hecke algebra
of a reduced crystallographic root system \(R\), and let \(\nu\) be a
generic Hermitian spherical parameter.  Then the spherical
\(\mathbb H(R)\)-module with parameter \(\nu\) is unitary if
and only if the normalized invariant forms are positive definite on the
reflection representation and on every irreducible constituent of
\(\operatorname{Sym}^2V\).
\end{theorem}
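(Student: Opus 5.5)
The plan is to assemble the final theorem from the results already established, reducing first to irreducible factors with \(w_0=-1\). The forward direction is immediate: a unitary module has positive definite normalized forms on every \(W\)-type, by the Barbasch--Moy criterion, and in particular on \(V\) and on the constituents of \(\operatorname{Sym}^2V\). For the converse, assume positivity on \(V\) and on all constituents of \(\operatorname{Sym}^2V\). If \(R\) is reducible, \(\mathbb H(R)\), \(V\), and the spherical module factor accordingly. Moreover, \(\operatorname{Sym}^2V\) contains \(\operatorname{Sym}^2V_i\) for each factor and a copy of \(V_i\) tensored with trivial types. Hence the hypotheses pass to each factor, and unitarity of a tensor product of factors follows. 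So we may assume \(R\) irreducible.

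For irreducible \(R\), the next step splits by type. In the simply-laced case, Theorem~\ref{thm:simply-laced-converse} gives exactly the claim. Its proof combines the \(D_{2m}\) classification with Corollary~\ref{cor:d-tails-indefinite}, the \(E_8\) computation, and downward inheritance, Lemma~\ref{lem:downward-inheritance}, for \(A\), odd \(D\), \(E_6\), and \(E_7\). For types \(B_n\) and \(C_n\), Theorem~\ref{thm:bc-diagonal-converse} gives a stronger statement, since \(U_n\) is a constituent of \(\operatorname{Sym}^2V\) by (10.3). For \(F_4\) and \(G_2\), Corollary~\ref{cor:exceptional-nonsimply-converse} together with Proposition~\ref{prop:uniform-reduction} applies. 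In each case the hypotheses of the uniform reduction principle hold. Assertion (1) is Theorem~\ref{thm:reflection-signature}, which is available since \(w_0=-1\) in \(B,C,F_4,G_2,D_{2m},E_7,E_8\). Assertion (2) is Theorem~\ref{thm:yu-two-wall}. Assertion (3) is the case analysis above. The conclusion is that the cell is a Yu cell, hence unitary.

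The remaining point is the Hermitian case with \(w_0\neq-1\) when we do not go through Levi inheritance. Types \(A\), odd \(D\), and \(E_6\) are handled via Corollary~\ref{cor:ADE-downward}. Property \((\mathcal P)\) there is stated for regular Hermitian parameters, and the downward-inheritance proof uses the ambient \(D_{2m}\) or \(E_8\), where every real displacement is Hermitian. Alternatively, Lemma~\ref{lem:boundary-hermitian-levi} can be used: conjugate \(\nu\) into \(V_J\) with \(R_J=\prod R_i\) and each \(w_0(R_i)=-1\). The standard module is then induced from the factors, and it is unitary iff each factor is. To verify the test hypotheses on each factor, restrict the ambient \(V\) and \(\operatorname{Sym}^2V\) to \(W_J\) as in (7.2). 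The induced-form factorization (15.2) shows that positivity on \(V_R\) and \(\operatorname{Sym}^2V_R\) yields positivity on \(V_{R_i}\) and \(\operatorname{Sym}^2V_{R_i}\).

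I expect this last transfer to be the main obstacle: showing that ambient positivity on \(V\) and \(\operatorname{Sym}^2V\) forces Levi positivity on \(V_{R_i}\) and \(\operatorname{Sym}^2 V_{R_i}\). Frobenius reciprocity only tells us that the Levi type appears in the restriction, not that the relevant multiplicity form is a compression of the ambient one. I would first try the route via Corollary~\ref{cor:ADE-downward}, which avoids this issue for simply-laced types. Types \(B\), \(C\), \(F_4\), and \(G_2\) all have \(w_0=-1\), so no Levi reduction is needed there. This leaves only the simply-laced \(w_0\neq-1\) cases, and these are already covered by Theorem~\ref{thm:simply-laced-converse}.
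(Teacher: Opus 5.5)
Your proposal is correct and is essentially the paper's own assembly. You take the forward direction from Barbasch--Moy and reduce to irreducible \(R\) by factorization (the copy of \(V_i\) tensored with trivial types that you need sits in \(V\) itself rather than in \(\operatorname{Sym}^2V\), which is harmless); you then use Theorem~\ref{thm:simply-laced-converse} for the simply-laced types, Theorem~\ref{thm:bc-diagonal-converse} for \(B_n,C_n\) since \(U_n\subset\operatorname{Sym}^2V\), and Corollary~\ref{cor:exceptional-nonsimply-converse} for \(F_4,G_2\) — exactly as the paper does. As you conclude yourself, the Hermitian-Levi transfer you flag as an obstacle is never needed, because the \(w_0\ne-1\) cases \(A\), odd \(D\), \(E_6\) are already absorbed into Theorem~\ref{thm:simply-laced-converse} via downward inheritance (Corollary~\ref{cor:ADE-downward}).
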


\begin{proof}
Unitarity implies positivity on every finite Weyl-group type.  The
algebra, its spherical module, and all the stated tests factor over the
irreducible components of \(R\), so it remains to prove the converse for
irreducible \(R\).  For the
converse, the simply-laced systems are covered by
Theorem~\ref{thm:simply-laced-converse}: even \(D\) is the uniform
inductive family, odd \(D\) and \(A\) follow by downward inheritance, and
\(E_6,E_7\) descend from the direct \(E_8\) assertion.  Types \(B\) and
\(C\) are covered by the sharper
Theorem~\ref{thm:bc-diagonal-converse}.  The two remaining irreducible
root systems, \(F_4\) and \(G_2\), are
Corollary~\ref{cor:exceptional-nonsimply-converse}.  In every case the
cells passing the stated tests are exactly Yu's cells, and Yu's theorem
makes those cells unitary.
\end{proof}

\begin{corollary}[field independence]\label{c:independence}
Let \(G\) be a simple Chevalley group over $\mathbb R$ or a nonarchimedean local field.  The generic
spherical unitary dual is independent of the local field, after the
standard normalization of real unramified parameters.  In a fundamental
Weyl chamber it is the disjoint union of \(2^\ell\) affine alcoves, with
alcoves understood relative to the Hermitian locus when
\(w_0\ne-1\), and
\[
       \ell=\nu\bigl(\Gamma(R^\vee_{\rm sh})\bigr)
       =\sum_{\Phi\subset R^\vee_{\rm sh}}
          \#\{e\in\operatorname{Exp}(\Phi):e<h_\Phi/2\}.
\]

If $G$ is simply-laced or $\mathrm{Sp}(2n)$, we can deduce the same description of the generic spherical unitary dual of $G(\mathbb C)$.
\end{corollary}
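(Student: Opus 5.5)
The plan is to move from the graded Hecke algebra to the group through the standard comparison theorems, and then count Yu cells using Proposition~\ref{prop:yu-matching-exponent}.

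\emph{Nonarchimedean case.} By Borel's theorem, the spherical (Iwahori-fixed) representations of \(G(F)\) correspond to modules of the Iwahori--Hecke algebra. The Barbasch--Moy preservation of unitarity \cite{BM} identifies unitarity on both sides. Lusztig's reduction \cite{Lusztig} then passes, for real unramified parameters, to the graded affine Hecke algebra \(\mathbb H(R)\) with \(R\) the coroot system of \(G\) and equal parameters. The normalization is such that \(\nu\) is the real part of the Satake parameter, and the walls are \(\beta(\nu)=1\) as in (1.1). Under this identification, generic unitarity is decided by the symmetric-degree-two theorem proved just above, and the cells passing that test are exactly Yu's cells.

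\emph{Real case.} Here I invoke Barbasch's petite \(K\)-type theory \cite{BarbaschPetite}. For a split real group, the intertwining operator on a petite \(K\)-type \(\mu\) coincides with the graded Hecke operator \(A_{E}(\nu)\), where \(E=\operatorname{Hom}_M(\mu,\mathbf1)\) is a \(W\)-type. The key check is that \(V\) and every irreducible constituent of \(\operatorname{Sym}^2V\) occur as \(E\) for petite \(K\)-types. For \(V\), one uses the \(K\)-types whose \(M\)-fixed spaces realize the reflection representation, namely constituents of \(\mathfrak p\). For \(\operatorname{Sym}^2 V\), one uses the level-two petite types, for example constituents of \(\operatorname{Sym}^2\mathfrak p\) or \(\Lambda^2\mathfrak p\), following Barbasch's tables.

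Granting this, positivity of the real spherical principal series on these petite types implies positivity of \(A_V\) and \(A_{\operatorname{Sym}^2V}\). The symmetric-degree-two theorem then puts \(\nu\) in a Yu cell. Conversely, Yu cells are unitary for the real group: complementary series deformation from \(\nu=0\) along the Yu moves works because Yu's argument is local in the rank-two factors, and it applies verbatim to the real intertwiners. In any case, the known case-by-case real results confirm this direction. This gives equality of the two generic unitary duals.

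\emph{Shape of the answer.} Each Yu cell is an alcove, as observed in Yu's diagrams, and the number of Yu cells is \(2^{k(R)}\). The matching-number formula (1.9)--(1.10) turns \(k\) into \(\ell=\nu(\Gamma(R^\vee_{\rm sh}))\) and the exponent count. When \(w_0\ne-1\), one reduces to Hermitian Levi flats by Lemma~\ref{lem:boundary-hermitian-levi}, and \(\ell\) is additive over the factors.

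\emph{Complex groups.} Petite types there are the zero-weight spaces of small \(K\)-representations. For simply-laced groups, the adjoint representation gives \(V\) and the small representations in \(\operatorname{Sym}^2\mathfrak g\) give \(\operatorname{Sym}^2V\), so the full Theorem A applies. For \(\mathrm{Sp}(2n,\mathbb C)\), only \(V\) and \(U_n\) are available as zero weights of petite types, which is exactly why Theorem~\ref{thm:bc-diagonal-converse} is needed. In the other types the needed constituents may fail to be petite, which explains why the complex statement is restricted.

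The main obstacle I expect is the real case: verifying that every irreducible constituent of \(\operatorname{Sym}^2V\) is realized by a petite \(K\)-type for every split real group, with exactly matching operators. I would first try to do this uniformly via \(K\)-types in \(\operatorname{Sym}^2\mathfrak p\), and fall back on Barbasch's case lists where necessary.
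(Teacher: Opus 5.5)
Your outline is the paper's proof. You use Barbasch--Moy and Lusztig for nonarchimedean fields and petite \(K\)-types for \(\mathbb R\). For \(G(\mathbb C)\) you use zero-weight spaces of small representations, with Theorem~\ref{thm:bc-diagonal-converse} handling \(\mathrm{Sp}(2n,\mathbb C)\), where only \(U_n\) is realized. Proposition~\ref{prop:yu-matching-exponent} gives the count.

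The one place you go beyond the paper is incorrect. You claim that Yu's two-wall argument applies verbatim to real intertwiners, and hence that Yu's cells are unitary for the split real group. On a non-petite \(K\)-type \(\mu\), the rank-one operator for \(\beta\) acts on the \(\mathrm{SO}(2)_\beta\)-weight \(\pm2k\) part of \(\mu^M\) by \(\prod_{j=1}^{|k|}(2j-1-\beta(\nu))/(2j-1+\beta(\nu))\). It does not act by \((1+\beta(\nu)\sigma(s_\beta))/(1+\beta(\nu))\). At \(\beta(\nu)=1\) this operator also kills the weight \(\pm4\) part, on which \(s_\beta\) acts by \(+1\). So the singular factor is not \(1+s_\beta\), and Yu's \(S_3\) rank count has no direct analogue. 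The real operators also acquire further singular walls \(\beta(\nu)=3,5,\dots\).

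The paper's proof does not argue this inclusion either. It uses petite \(K\)-types only to transport the nonunitarity tests. The unitarity of Yu's cells for real groups therefore rests on the known classifications, which is exactly your fallback. Drop the ``verbatim'' claim and cite those results instead.
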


\begin{proof}
For nonarchimedean fields this is the spherical Barbasch--Moy/Lusztig
transfer to the equal-parameter graded affine Hecke algebra.  For
\(\mathbb R\), the Barbasch--Vogan petite-\(K\)-type comparison identifies
the relevant normalized operators with the same graded-Hecke operators
\cite{BarbaschPetite}.  The reflection representation and the required
constituents of its symmetric square lift to petite \(K\)-types of the
corresponding split real group.  When \(G(\mathbb C)\) is viewed as a real
group, the analogous role is played by small
\(G(\mathbb C)\)-representations.  For simply-laced \(G\), the required
Weyl-group representations occur in their zero-weight spaces; compare
\cite{Reeder}. For $\mathrm{Sp}(2n,\mathbb C)$, the stronger criterion in Theorem \ref{thm:bc-diagonal-converse} applies since $U_n$ is the $0$-weight space of a small representation.

The theorem and Proposition~\ref{prop:yu-matching-exponent} now give the assertion.
\end{proof}

\begin{remark}
The only simple group not covered by the argument in Corollary \ref{c:independence} is $\mathrm{So}(2n+1)$ because $U_n$ is not the $0$-weight space of a small representation in that case. 
\end{remark}

\begin{example}[The first failure of the \(V\)--\(X_n\) criterion]
\label{ex:C4-X-failure}
The reflection representation together with the off-diagonal constituent
\(X_n\) does not detect unitarity in the Hecke root type \(C_n\).  The smallest
counterexample occurs in \(C_4\).

Use the Bourbaki simple roots
\[
 \alpha_1=e_1-e_2,\qquad
 \alpha_2=e_2-e_3,\qquad
 \alpha_3=e_3-e_4,\qquad
 \alpha_4=2e_4,
\]
and take the dominant parameter
\[
                  \nu=\left(1,\frac57,\frac37,\frac17\right).
\]
No positive root has value \(1\), so \(\nu\) is generic.  Its root ideal is
\[
\begin{aligned}
 I(\nu)
  =\{&
     e_1+e_2,\ e_1+e_3,\ e_1+e_4,\ e_2+e_3,
     2e_1,\ 2e_2
    \}.
\end{aligned}
\]
Its minimal roots are
\[
             e_1+e_4,\qquad e_2+e_3,
\]
with respective simple-root coordinates
\[
             (1,1,1,1),\qquad (0,1,2,1).
\]
Thus
\[
 I(\nu)=
 \bigl\langle e_1+e_4,\ e_2+e_3\bigr\rangle_{\mathrm{up}}.
\]

The ideal \(I(\nu)\) is reflection-balanced and locked; it is not a Yu ideal.  Direct evaluation of the normalized
intertwining forms on
\[
 \operatorname{Sym}^2V=\mathbf1\oplus U_4\oplus X_4
\]
gives
\[
             n_-\bigl(A_{U_4}(\nu)\bigr)=2,
             \qquad
             n_-\bigl(A_{X_4}(\nu)\bigr)=0.
\]
Since the parameter is generic, the second equality means that the
\(X_4\)-form is positive definite.  The reflection form is also positive
definite because \(I(\nu)\) is balanced.  Nevertheless, the spherical
module is not unitary: its \(U_4\)-form is indefinite.

Since the Hecke root system in this paper is the coroot system of the
group, this type-\(C_4\) parameter corresponds to the complex group
\(\operatorname{SO}(9,\mathbb C)\).  It shows that the small
representations whose zero-weight spaces give the Weyl types \(V\) and
\(X_4\) do {\it not} suffice to detect spherical unitarity for
\(\operatorname{SO}(9,\mathbb C)\) (and recall that $U_4$ is not in the zero-weight space of a small $\operatorname{SO}(9,\mathbb C)$-representation).
\end{example}

\subsection{A two-dimensional picture of the \(E_8\) cells}
\label{subsec:e8-yu-projection}

Yu's labelled gallery gives a particularly economical planar picture of
the sixteen \(E_8\) alcoves.  Number them \(U_0,\ldots,U_{15}\), with
\(U_0\) the fundamental alcove and \(U_{15}=U_\infty\) the tail, and
number the two branches of the unique diamond \(U_7,U_8\).  Let \(c_i\)
be the barycenter of \(U_i\), in the Euclidean reflection representation
\(\mathfrak a\).  Define
\[
 e_1=\frac{c_{15}-c_0}{\lVert c_{15}-c_0\rVert},\qquad
 e'_2=(c_8-c_7)-\langle c_8-c_7,e_1\rangle e_1,\qquad
 e_2=\frac{e'_2}{\lVert e'_2\rVert},
\]
and put
\[
                         P_{\rm Yu}=\mathbb R e_1\oplus\mathbb R e_2.
\]
Thus the first coordinate records progress from the head to the tail of
Yu's gallery, while the second resolves its only branching.  Apart from
reversing one of the coordinate axes, the plane is determined entirely
by the head--tail symmetry and the diamond; it involves no choice of
coordinates for the \(E_8\) root system.

Figure~\ref{fig:e8-yu-head-tail} shows orthogonal projection onto
\(P_{\rm Yu}\).  The pale outlines are the actual projections of the
eight-dimensional alcoves.  Since projections of distinct alcoves may
overlap, the colored polygons are homothetic copies, with ratio \(0.28\),
about the projected barycenters.  They are display glyphs rather than a
second geometric projection.  The red segments reproduce the two-wall
adjacency graph.  In particular, the two points labelled \(7\) and \(8\)
display the two sides of the diamond, which merge again at \(U_9\).

\begin{figure}[ht]
  \centering
  \includegraphics[width=0.96\textwidth,trim=0 45 0 0,clip]
    {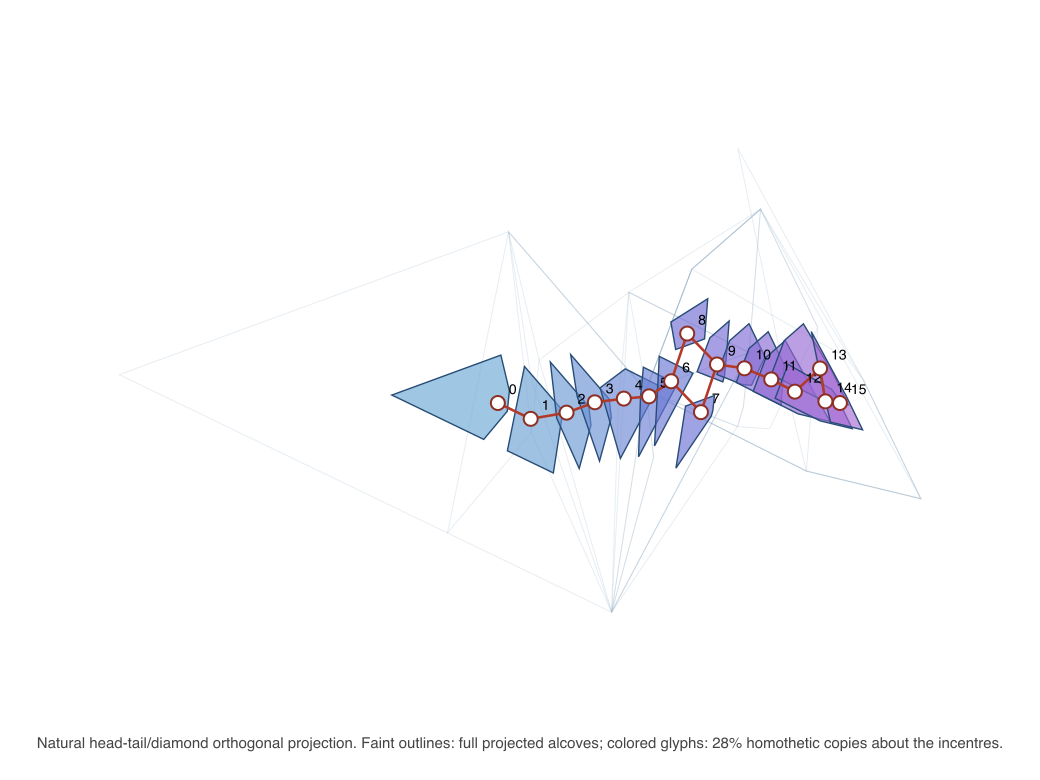}
  \caption{The sixteen \(E_8\) Yu alcoves in the natural
  head--tail plane \(P_{\rm Yu}\).}
  \label{fig:e8-yu-head-tail}
\end{figure}

\section*{Acknowledgements} The author used ChatGPT 5.6 Sol (Light) \cite{ChatGPTCodex} to
test ideas developed in this paper, particularly the combinatorial arguments involving the antichains,  and to assist with the preparation
and editing of the \LaTeX{} source.  The author checked the resulting
calculations, arguments, and text and takes full responsibility
for the contents of the paper. The author thanks Dragos Cri\c san for allowing him to include Theorem 2.2.

\end{document}